\newtheorem*{stahl}{Theorem S}
\newtheorem*{kuzmina}{Theorem K}
\newtheorem*{gonchar}{Theorem GR}
\newtheorem{thm}{Theorem}
\newtheorem{prop}[thm]{Proposition}
\newtheorem{cor}[thm]{Corollary}
\newtheorem{lem}[thm]{Lemma}
\newtheorem{df}[thm]{Definition}
\numberwithin{equation}{section}
\newcommand{\T}		{\mathbb{T}}
\newcommand{\D}		{\mathbb{D}}
\newcommand{\Om}	{\mathbb{O}}
\newcommand{\C}		{\mathbb{C}}
\newcommand{\N}		{\mathbb{N}}
\newcommand{\pd}		{\mathcal{G}}
\newcommand{\pk}		{\mathcal{K}}
\newcommand{\K}		{\textnormal{K}}
\newcommand{\E}		{\mathcal{E}}
\newcommand{\alg}		{\mathcal{A}}
\newcommand{\rat}		{\mathcal{R}}
\newcommand{\poly}	{\mathcal{P}}
\newcommand{\mpoly}	{\mathcal{M}}
\newcommand{\pr}		{\textnormal{pr}}
\newcommand{\eba}	{\Psi}
\newcommand{\di}		{\mathscr{D}}
\newcommand{\ged}	{\omega_{(\K,\T)}}
\newcommand{\clos}	{\textnormal{clos}}
\newcommand{\cp}	{\textnormal{cap}}
\newcommand{\im}	{\textnormal{Im}}
\newcommand{\re}	{\textnormal{Re}}
\newcommand{\n}	{\textnormal{\bf n}}
\newcommand{\Lm}	{\Lambda}
\newcommand{\const}	{\textnormal{const.}}
\newcommand{\supp}	{\textnormal{supp}}
\newcommand{\dist}	{\textnormal{dist}}
\newcommand{\diam}	{\textnormal{diam}}
\newcommand{\cws}	{\stackrel{*}{\to}}
\newcommand{\cic}	{\stackrel{\scriptsize\cp}{\rightarrow}}
\begin{document}

\title[Rational Approximants to Algebraic Functions]{Weighted Extremal Domains and Best Rational Approximation}

\author[L. Baratchart]{Laurent Baratchart}
\address{INRIA, Project APICS, 2004 route des Lucioles --- BP 93, 06902 Sophia-Antipolis, France}
\email{laurent.baratchart@sophia.inria.fr}

\author[H. Stahl]{Herbert Stahl}
\address{Beuth Hochschule/FB II; Luxemburger Str. 10; D-13353 Berlin, Germany}
\email{stahl@tfh-berlin.de}

\author[M. Yattselev]{Maxim Yattselev}
\address{Corresponding author, Department of Mathematics, University of Oregon, Eugene, OR, 97403, USA}
\email{maximy@uoregon.edu}

\thanks{The research of first and the third authors was partially supported by the ANR project ``AHPI'' (ANR-07-BLAN-0247-01). The research of the second author has been supported by the Deutsche Forschungsgemeinschaft (AZ: STA 299/13-1).}

\begin{abstract}
Let $f$ be holomorphically continuable over the complex plane except for finitely many branch points contained in the unit disk.  We prove that best rational approximants to $f$ of degree $n$, in the $L^2$-sense on the unit circle, have poles that asymptotically distribute according to the equilibrium measure on the compact set outside of which $f$ is single-valued and which has minimal Green capacity in the disk among all such sets. This provides us with $n$-th root asymptotics of the approximation error. By conformal mapping, we deduce further estimates in approximation by rational or meromorphic functions  to $f$ in the $L^2$-sense on more general Jordan curves encompassing the
branch points. The key to these approximation-theoretic results is a characterization of extremal domains of holomorphy for $f$ in the sense of a weighted logarithmic potential, which is the technical core of the paper.
\end{abstract}

\subjclass[2000]{42C05, 41A20, 41A21}

\keywords{rational approximation, meromorphic approximation, extremal domains, weak asymptotics, non-Hermitian orthogonality.}

\maketitle

\section*{List of Symbols}

\begin{flushleft}
\begin{tabular}{ p{2cm} l }
{\bf Sets}:& \\
$\overline\C$ & extended complex plane\\
$T$ & Jordan curve with exterior domain $O$ and interior domain $G$\\
$\T$ & unit circle with exterior domain $\Om$ and interior domain $\D$\\
$E_f$ & set of the branch points of $f$\\
$K^*$ & reflected set $\{z:1/\bar z\in K\}$\\
$\K$ & set of minimal condenser capacity in $\pk_f(G)$\\
$\Gamma_\nu$ and  $D_\nu$ & minimal set for Problem $(f,\nu)$ and its complement in $\overline\C$ \\
$(\Gamma)_\epsilon$ & $\{z\in\D:\dist(z,\Gamma)<\epsilon\}$\\
$\gamma^u$ & image of a set $\gamma$ under $1/(\cdot - u)$\\
\end{tabular}

\begin{tabular}{p{2cm} l}
{\bf Collections}: &\\ 
$\pk_f(G)$ & admissible sets for $f\in\alg(G)$, $\pk_f=\pk_f(\D)$\\
$\pd_f$ & admissible sets in $\pk_f$ comprised of a finite number of continua\\
$\Lambda(F)$ & probability measures on $F$\\
\end{tabular}

\begin{tabular}{p{2cm} l}
{\bf Spaces}:&\\
$\poly_n$  &  algebraic polynomials of degree at most $n$\\
$\mpoly_n(G)$  & monic algebraic polynomials of degree $n$ with $n$ zeros in $G$, $\mpoly_n=\mpoly_n(\D)$\\
$\rat_n(G)$   & $\rat_n(G):=\poly_{n-1}\mpoly_n^{-1}(G)$, $\rat_n=\rat_n(\D)$\\
$\alg(G)$   & holomorphic functions $\overline\C$ except for branch-type singularities in $G$\\
$L^p(T)$ & classical $L^p$ spaces, $p<\infty$, with respect to arclength on $T$ and the norm $\|\cdot\|_{p,T}$\\ 
$\|\cdot\|_K$ & supremum norm on a set $K$\\
$E^2(G)$ & Smirnov class of holomorphic functions in $G$ with $L^2$ traces on $T$\\
$E_n^2(G)$ & $E_n^2(G):=E^2(G)\mpoly_n^{-1}(G)$\\
$H^2$ & classical Hardy space of holomorphic functions in $\D$ with $L^2$ traces on $\T$\\
$H_n^2$ & $H_n^2:=H^2\mpoly_n^{-1}$\\
\end{tabular}

\begin{tabular}{p{2cm} l}
{\bf Measures}:&\\
$\omega^*$ & reflected measure, $\omega^*(B)=\omega(B^*)$\\
$\widehat\omega$ or $\widetilde\omega$ & balayage of $\omega$, $\supp(\omega)\subset D$, onto $\partial D$\\
$\omega_F$ & equilibrium distribution on $F$\\
$\omega_{F,\psi}$ & weighted equilibrium distribution on $F$ in the field $\psi$\\
$\omega_{(F,E)}$ & Green equilibrium distribution on $F$ relative to $\overline\C\setminus E$\\
\end{tabular}

\begin{tabular}{p{2cm} l}
{\bf Capacities}:&\\
$\cp(K)$ & logarithmic capacity of $K$\\
$\cp_\nu(K)$ & $\nu$-capacity of $K$\\
$\cp(E,F)$ & capacity of the condenser $(E,F)$\\
\end{tabular}

\begin{tabular}{p{2cm} l}
{\bf Energies}:&\\
$I[\omega]$ & logarithmic energy of $\omega$\\
$I_\psi[\omega]$ & weighted logarithmic energy of $\omega$ in the field $\psi$\\
$I_D[\omega]$ & Green energy of $\omega$ relative to $D$\\
$I_\nu[K]$ & $\nu$-energy of a set $K$\\
$\di_D(u,v)$ & Dirichlet integral of functions $u,v$ in a domain $D$\\
\end{tabular}

\begin{tabular}{p{2cm} l}
{\bf Potentials}:&\\ 
$V^\omega$ & logarithmic potential of $\omega$\\
$V_*^\omega$ & spherical logarithmic potential of $\omega$\\
$U^\nu$ & spherically normalized logarithmic potential of $\nu^*$\\
$V_D^\omega$ & Green potential of $\omega$ relative to $D$\\
$g_D(\cdot,u)$ & Green function for $D$ with pole at $u$\\
\end{tabular}

\begin{tabular}{p{2cm} l}
{\bf Constants}: &\\
$c(\psi;F)$ & modified Robin constant,  $c(\psi;F)=I_\psi[\omega_{F,\psi}]-\int\psi d\omega_{F,\psi}$\\
$c(\nu;D)$ & is equal to $\int g_D(z,\infty)d\nu(z)$ if $D$ is unbounded and to $0$ otherwise\\
\end{tabular}

\end{flushleft}

\section{Introduction}
\label{sec:intro}

Approximation theory in the complex domain has undergone striking developments over the last years that gave new impetus to this classical subject. After the solution to the Gonchar conjecture \cite{Par86,Pr93} and the achievement of weak asymptotics in Pad\'e approximation \cite{St85,St86,GRakh87} came the disproof of the Baker-Gammel-Wills conjecture \cite{Lub03,Bus02}, and the Riemann-Hilbert approach to the limiting behavior of orthogonal polynomials \cite{DKMLVZ99a,KMcLVAV04} that opened the way to unprecedented strong  asymptotics in rational  interpolation \cite{ApKVA08,Ap02,BY10} (see \cite{Deift,KamvissisMcLaughlinMiller} for other applications of this powerful device). Meanwhile, the spectral approach to  meromorphic approximation \cite{AAK71}, already instrumental in \cite{Par86}, has produced sharp converse theorems in rational approximation and fueled engineering applications to control systems and signal processing \cite{GL84,Peller,Nikolskii,Partington2}.

In most investigations involved with non-Hermitian orthogonal polynomials and rational interpolation, a central role has been  played by certain geometric extremal problems from logarithmic potential theory, close in spirit to the Lavrentiev type \cite{Kuz80}, that were introduced in \cite{St85b}. On the one hand, their solution produces systems of arcs over which non-Hermitian orthogonal polynomials can be analyzed; on the other hand such polynomials are precisely denominators of rational interpolants to functions that may be expressed as Cauchy integrals over this system of arcs, the interpolation points being chosen in close relation with the latter.

One issue facing now the theory is to extend to \emph{best} rational or meromorphic approximants of prescribed degree to a given function the knowledge that was gained about rational interpolants. Optimality may of course be given various meanings. However, in view of the connections with interpolation theory pointed out in \cite{Lev69,BSW96,BS02}, and granted their relevance to spectral theory, the modeling of signals and systems, as well as inverse problems \cite{Antoulas,FournierLeblond03,HannanDeistler,Nikolskii2,BMSW06,Isakov,Regalia}, it is natural to consider foremost best approximants in Hardy classes. 

The main interest there attaches to the behavior of the poles whose determination is the non-convex and most difficult part of the problem. The first obstacle to value interpolation theory in this context is that it is unclear whether best approximants of a given degree should interpolate the function at enough points, and even if they do these interpolation points are no longer parameters to be chosen adequately in order to produce convergence but rather unknown quantities implicitly determined by the optimality property. The present paper deals with $H^2$-best rational approximation in the complement of the unit disk, for which maximum interpolation is known to take place; it thus remains in this case to locate the interpolation points. This we do asymptotically, when the degree of the approximant goes large, for functions whose singularities consist of finitely many poles and branch points in the disk. More precisely, we prove that the normalized (probability) counting measures of the poles of the approximants converge, in the weak star sense, to the equilibrium distribution of the continuum of minimum Green capacity, in the disk, outside of which the approximated function is single-valued. By conformal mapping, the result carries over to best meromorphic approximants with a prescribed number of poles, in the $L^2$-sense on a Jordan curve encompassing the poles and branch points. We also estimate the approximation error in the $n$-th root sense, that turns out to be the same as in uniform approximation for the functions under consideration. Note that $H^2$-best rational approximants on the disk are of fundamental importance in stochastic identification \cite{HannanDeistler} and that functions with branch points arise naturally in inverse sources and potential problems \cite{BAbHL05,BLM06}, so the result may be regarded as a prototypical case of the above-mentioned program.

The paper is organized as follows. In Sections~\ref{sec:ral2} and~\ref{sec:ra}, we fix the terminology and recall some known facts about $H^2$-best rational approximants and sets of minimal condenser capacity, before stating our main results (Theorems~\ref{thm:L2T} and~\ref{thm:convcap}) along with some corollaries. We set up in Section~\ref{sec:min} a weighted version of the extremal potential problem introduced in \cite{St85b} ({\it cf.} Definition~\ref{df:minset}) and stress its main features. Namely, a solution exists uniquely and can be characterized, among continua outside of which the approximated function is single-valued, as a system of arcs possessing the so-called $S$-property in the field generated by the weight ({\it cf.} Definition~\ref{df:sym} and Theorem~\ref{thm:minset}). Section~\ref{sec:pade} is a brief introduction to multipoint Pad\'e interpolants, of which $H^2$-best rational approximants are a particular case. Section~\ref{sec:proofs} contains the proofs of all the results: first we establish Theorem~\ref{thm:minset}, which is the technical core of the paper, using compactness properties of the Hausdorff metric together with the {\it a priori} geometric estimate of Lemma~\ref{lem:1pr} to prove existence; the $S$-property is obtained by showing the local equivalence of our weighted extremal problem with one of minimal condenser capacity (Lemma~\ref{lem:1sym}); uniqueness then follows from a variational argument using Dirichlet integrals (Lemma~\ref{lem:1e}). After Theorem~\ref{thm:minset} is established, the proof of Theorem~\ref{thm:convcap} is not too difficult. We choose as weight (minus) the potential of a limit point of the normalized counting measures of the interpolation points of the approximants and, since we now know that a compact set of minimal weighted capacity exists and that it possesses the $S$-property, we can  adapt results from \cite{GRakh87} to the effect that the normalized counting measures of the poles of the approximants converge to the weighted equilibrium distribution on this system of arcs. To see that this is nothing but the Green equilibrium distribution, we appeal to the fact that poles and interpolation points are reflected from each other across the unit circle in $H^2$-best  rational approximation. The results carry over to more general domains as in Theorem~\ref{thm:L2T} by a conformal mapping (Theorem~\ref{cor:L2T}). The appendix in Section~\ref{sec:pt} gathers some technical results from logarithmic potential theory that are needed throughout the paper.

\section{Rational Approximation in $L^2$}
\label{sec:ral2}

In this work we are concerned with rational approximation of functions analytic at infinity having multi-valued meromorphic continuation to the entire complex plane deprived of a finite number of points. The approximation will be understood in the $L^2$-norm on a rectifiable Jordan curve encompassing all the singularities of the approximated function. Namely, let $T$ be such a curve. Let further $G$ and $O$ be the interior and exterior domains of $T$, respectively, i.e., the bounded and unbounded components of the complement of $T$ in the extended complex plane~$\overline\C$. We denote by $L^2(T)$ the space of square-summable functions on $T$ endowed with the usual norm
\[
\|f\|_{2,T}^2 := \int_T |f|^2ds,
\]
where $ds$ is the arclength differential. Set $\poly_n$ to be the space of algebraic polynomials of degree at most $n$ and $\mpoly_n(G)$ to be its subset consisting of monic polynomials with $n$ zeros in $G$. Define 
\begin{equation}
\label{eq:rat}
\rat_n(G) := \left\{\frac{p(z)}{q(z)}=\frac{p_{n-1}z^{n-1}+p_{n-2}z^{n-2}+\cdots+p_0}{z^n+q_{n-1}z^{n-1}+\cdots+q_0}:~ p\in\poly_{n-1},~q\in\mpoly_n(G)\right\}.
\end{equation}
That is, $\rat_n(G)$ is the set of rational functions with at most $n$ poles that are holomorphic in some neighborhood of $\overline O$ and vanish at infinity. Let $f$ be a function holomorphic and vanishing at infinity (vanishing at infinity is a normalization required for convenience only). We say that $f$ belongs to the class $\alg(G)$ if
{\it
\begin{itemize}
\item [(i)] $f$ admits holomorphic and single-valued continuation from infinity to an open neighborhood of $\overline O$;
\item[(ii)] $f$ admits meromorphic, possibly multi-valued, continuation along any arc in $\overline G\setminus E_f$ starting from $T$, where $E_f$ is a finite set of points in $G$;
\item[(iii)] $E_f$ is non-empty, the meromorphic continuation of $f$ from infinity has a branch point at each element of $E_f$.
\end{itemize}
}

The primary example of functions in $\alg(G)$ is that of algebraic functions. Every algebraic function $f$ naturally defines a Riemann surface. Fixing a branch of $f$ at infinity is equivalent to selecting a sheet of this covering surface. If all the branch points and poles of $f$ on this sheet lie above $G$, the function $f$ belongs to $\alg(G)$. Other functions in $\alg(G)$ are those of the form $g\circ\log(l_1/l_2)+r$, where $g$ is entire and $l_1,l_2\in\mpoly_m(G)$ while $r\in\rat_k(G)$  for some $m,k\in\N$. However, $\alg(G)$ is defined in such a way that it contains no function in $\rat_n(G)$, $n\in\N$, in order to avoid degenerate cases.

With the above notation, the goal of this section is to describe the asymptotic behavior of
\begin{equation}
\label{eq:L2besterror}
\rho_{n,2}(f,T) := \inf\left\{\|f-r\|_{2,T}:~r\in\rat_n(G)\right\}, \quad 
f\in\alg(G).
\end{equation}
This problem is, in fact, a variation of a classical question in Chebyshev (uniform) rational approximation of holomorphic functions where it is required to describe the asymptotic behavior of
\[
\rho_{n,\infty}(f,T) := \inf\left\{\|f-r\|_T:~r\in\rat_n(G)\right\},
\]
where $\|\cdot\|_T$ is the supremum  norm on $T$. The theory behind Chebyshev approximation is rather well established while its $L^2$-counterpart, which naturally arises in system identification and control theory \cite{B_CMFT99} and serves as a method to approach inverse source problems \cite{BAbHL05,BLM06,BMSW06}, is not so much developed. In particular, it follows from the techniques of rational interpolation devised by Walsh \cite{Walsh} that 
\begin{equation}
\label{eq:limsup}
\limsup_{n\to\infty}\rho_{n,\infty}^{1/n}(f,T) \leq \exp\left\{-\frac{1}{\cp(K,T)}\right\}
\end{equation}
for any function $f$ holomorphic outside of $K\subset G$, where $\cp(K,T)$ is the condenser capacity (Section~\ref{sss:cc}) of a set $K$ contained in a domain $G$ relative to this domain\footnote{In Section~\ref{sec:pt} the authors provide a concise but self-contained account of logarithmic potential theory. The reader may want to consult this section to get accustomed with the employed notation for capacities, energies, potentials, and equilibrium measures.}. On the other hand, it was conjectured by Gonchar and proved by Parf\"enov \cite[Sec. 5]{Par86} on simply connected domains, also later by Prokhorov \cite{Pr93} in full generality, that
\begin{equation}
\label{eq:liminf}
\liminf_{n\to\infty}\rho_{n,\infty}^{1/2n}(f,T) \leq \exp\left\{-\frac{1}{\cp(K,T)}\right\}.
\end{equation}
Notice that only the $n$-th root is taken in \eqref{eq:limsup} while \eqref{eq:liminf} provides asymptotics for the $2n$-th root. Observe also that there are many compacts $K$ which make a given $f\in\alg(G)$ single-valued in their complement. Hence, \eqref{eq:limsup} and \eqref{eq:liminf} can be sharpened by taking the infimum over $K$ on the right-hand side of both inequalities.  To explore this fact we need the following definition.

\begin{df}
\label{df:admiss}
We say that a compact $K\subset G$ is admissible for $f\in\alg(G)$ if $\overline\C\setminus K$ is connected and $f$ has meromorphic and single-valued extension there. The collection of all admissible sets for $f$ we denote by $\pk_f(G)$.
\end{df}

As equations \eqref{eq:limsup} and \eqref{eq:liminf} suggest and Theorem \ref{thm:L2T} below shows, the relevant admissible set in rational approximation to $f\in\alg(G)$ is the set of \emph{minimal condenser capacity} \cite{St85,St85b,St86,St89} relative to $G$:

\begin{df}
\label{df:dmcc}
Let $f\in\alg(G)$. A compact $\K\in\pk_f(G)$ is said to be a set of minimal condenser capacity for $f$ if
\begin{itemize}
\item [(i)] $\cp(\K,T)\leq\cp(K,T)$ for any $K\in\pk_f(G)$; \smallskip
\item [(ii)] $\K\subset K$ for any $K\in\pk_f(G)$ such that $\cp(K,T)=\cp(\K,T)$.
\end{itemize}
\end{df}
It follows from the properties of condenser capacity that $\cp(\K,T)=\cp(T,\K)=\cp(\overline O,\K)$ since $\K$ has connected complement that contains $T$ by Definition~\ref{df:admiss}. In other words, the set $\K$ can be seen as the complement of the ``largest'' (in terms of capacity) domain containing $\overline O$ on which $f$ is single-valued and meromorphic. In fact, this is exactly the point of view taken up in \cite{St85,St85b,St86,St89}. It is known that such a set always exists, is unique, and has, in fact, a rather special structure. To describe it, we need the following definition.

\begin{df}
\label{df:elmin}
We say that a set $K\in\pk_f(G)$ is a smooth cut for $f$ if $K=E_0\cup E_1\cup\bigcup \gamma_j$, where $\bigcup \gamma_j$ is a finite union of open analytic arcs, $E_0\subseteq E_f$ and each point in $E_0$ is the endpoint of exactly one $\gamma_j$, while $E_1$ is a finite set of points each element of which is the endpoint of at least three arcs $\gamma_j$. Moreover, we assume that across each arc $\gamma_j$ the jump of $f$ is not identically zero.
\end{df}

Let us informally explain the motivation behind Definition~\ref{df:elmin}. In order to make $f$ single-valued, it is intuitively clear that one needs to choose a proper system of cuts joining certain points in $E_f$ so that one cannot encircle these points nor access the remaining ones without crossing the cut. It is then plausible that the geometrically ``smallest'' system of cuts comprises of Jordan arcs. In the latter situation, the set $E_1$ consists of the points of intersection of these arcs. Thus, each element of $E_1$ serves as an endpoint for at least three arcs since two arcs meeting at a point are considered to be one. In Definition~\ref{df:elmin} we also impose that the arcs be analytic. It turns out that the set of minimal condenser capacity (Theorem~\hyperref[thm:S]{S}) as well as minimal sets from Section~\ref{sec:min} (Theorem~\ref{thm:minset}) have exactly this structure. It is possible for $E_0$ to be a proper subset of $E_f$. This can happen when some of the branch points of $f$ lie above $G$ but on different sheets of the Riemann surface associated with $f$ that cannot be accessed without crossing the considered system of cuts. 

The following is known about the set $\K$ (Definition~\ref{df:dmcc}) \cite[Thm. 1 and 2]{St85} and \cite[Thm.~1]{St85b}.

\begin{stahl}
\label{thm:S}
Let $f\in\alg(G)$. Then $\K$, the set of minimal condenser capacity for $f$, exists and is unique. Moreover, it is a smooth cut for $f$ and
\begin{equation}
\label{eq:mincondcap}
\frac{\partial}{\partial\n^+}V_{\overline\C\setminus\K}^{\omega_{(T,\K)}} = \frac{\partial}{\partial\n^-} V_{\overline\C\setminus\K}^{\omega_{(T,\K)}} \quad \mbox{on} \quad \bigcup \gamma_j,
\end{equation}
where $\partial/\partial\n^\pm$ are the partial derivatives\footnote{\label{hacross}Since the arcs $\gamma_j$ are analytic and the potential $V_{\overline\C\setminus\K}^{\omega_{(T,\K)}}$ is identically zero on them, $V_{\overline\C\setminus\K}^{\omega_{(T,\K)}}$ can be harmonically continued across each $\gamma_j$ by reflection. Hence, the partial derivatives in \eqref{eq:mincondcap} exist and are continuous.} with respect to the one-sided normals on each $\gamma_j$, $V_{\overline\C\setminus\K}^{\omega_{(T,\K)}}$ is the Green potential of $\omega_{(T,\K)}$ relative to $\overline\C\setminus\K$, and $\omega_{(T,\K)}$ is the Green equilibrium distribution on $T$ relative to $\overline\C\setminus\K$ (Section~\ref{sss:cc}).
\end{stahl}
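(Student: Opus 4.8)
The plan is to treat existence, the $S$-property, the smooth-cut structure, and uniqueness in turn, in the variational spirit of \cite{St85,St85b}.

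\textbf{Existence of a minimizer.} One checks first that $\pk_f(G)\neq\emptyset$ --- any finite system of rectifiable arcs joining the points of $E_f$ so as to render $f$ single-valued belongs to it --- and that $m:=\inf\{\cp(K,T):K\in\pk_f(G)\}\in(0,\infty)$. Pick a minimizing sequence $K_j$; after passing to a subsequence we may assume $K_j\to K_\infty$ in the Hausdorff metric on compact subsets of $\overline G$. Three points require care. (a) \emph{No collapse}: an \emph{a priori} geometric estimate forces an admissible set to carry a definite amount of mass near each branch point it must enclose (the diameter of its intersection with a small disk about such a point is bounded below in terms of the local branching order), so $K_\infty$ still attaches to the relevant branch points. (b) \emph{$K_\infty\in\pk_f(G)$}: single-valuedness and meromorphy of the continuation of $f$ across $\overline\C\setminus K_j$ persist in the limit once the geometry is controlled, because the obstructing monodromy is a closed condition. (c) \emph{Lower semicontinuity}: $\cp(K_\infty,T)\le\liminf_j\cp(K_j,T)=m$, so $K_\infty$ is a minimizer.

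\textbf{Any minimizer has the $S$-property.} Fix a minimizer $K$ and set $g:=V_{\overline\C\setminus K}^{\omega_{(T,K)}}$, the Green potential of the Green equilibrium distribution $\omega_{(T,K)}$ relative to $\overline\C\setminus K$: thus $g\equiv0$ on $K$, $g>0$ and harmonic on $\overline\C\setminus(K\cup T)$, and $1/\cp(K,T)$ equals, up to a positive constant, the Dirichlet integral $\di_{\overline\C\setminus K}(g,g)$. At a ``regular'' point $z_0\in K$ take a small disk $U\ni z_0$ with $U\cap(T\cup E_f)=\emptyset$ and a smooth compactly supported vector field $h$ on $U$, and let $K_t$ be the image of $K$ under the time-$t$ flow of $h$. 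For $|t|$ small, $K_t\in\pk_f(G)$ --- admissibility is stable under small ambient isotopies fixing $E_f$ --- so $t\mapsto\cp(K_t,T)$ has a local minimum at $t=0$. The first variation of condenser capacity under such a deformation is a boundary integral along $K$ of the jump of $|\nabla g|^2$ tested against the normal component of $h$; its vanishing for every $h$ forces $(\partial g/\partial\n^+)^2=(\partial g/\partial\n^-)^2$ on the regular part of $K$, and since $g>0$ off $K$ both one-sided normal derivatives are nonnegative, hence equal --- this is \eqref{eq:mincondcap}. Equivalently, following \cite{St85b}, one identifies the extremal problem \emph{locally} with a condenser-capacity problem whose solution is classically known to have the $S$-property.

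\textbf{The $S$-property forces a smooth-cut structure.} Away from $E_f$ and from the multiple points of $K$, $g$ is locally the real part of an analytic function, so $R:=(\partial g/\partial x-i\,\partial g/\partial y)^2$ is locally analytic and single-valued up to $K$ from either side, and \eqref{eq:mincondcap} says its two one-sided boundary values on $K$ coincide; hence $R$ continues analytically across the regular part of $K$. Since $g\equiv0$ there, $K$ is locally a trajectory of the quadratic differential $R(z)\,dz^2$, and as $R$ has only finitely many zeros and poles (poles over $E_f$, over $T$, and at $\infty$; zeros at the multiple points), the trajectory-structure theory of quadratic differentials gives $K=E_0\cup E_1\cup\bigcup\gamma_j$, a finite union of open analytic arcs, with $E_0\subseteq E_f$ furnishing the simple endpoints and $E_1$ the points where $\ge3$ arcs meet; the possibility $E_0\subsetneq E_f$ reflects branch points of $f$ lying over $G$ on sheets one cannot reach from $\overline O$ without crossing $K$. (That the jump of $f$ across each $\gamma_j$ is not identically zero is deferred to the uniqueness step.)

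\textbf{Uniqueness, removal of superfluous arcs, and the main obstacle.} Let $K_1,K_2$ be minimizers, $g_i:=V_{\overline\C\setminus K_i}^{\omega_{(T,K_i)}}$, and $u:=g_1-g_2$ on $D:=\overline\C\setminus(K_1\cup K_2)$. A Green-identity computation --- using that $u=0$ on $K_1\cap K_2$, that the $S$-property \eqref{eq:mincondcap} of both $K_i$ discards the boundary terms along the analytic arcs, and the sign of $u$ on the symmetric differences --- yields $\di_D(u,u)\le0$, so $u$ is constant, necessarily $0$; thus $g_1\equiv g_2$ on $D$, and since each $g_i$ vanishes exactly on $K_i$ this forces $K_1=K_2$. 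Hence the minimizer is unique; call it $\K$, and condition (ii) of Definition~\ref{df:dmcc} holds trivially. Finally, were the jump of $f$ identically zero across some $\gamma_j$, that arc would be superfluous --- removing it leaves an admissible set of capacity $\le m$, hence a minimizer distinct from $\K$, impossible --- so $\K$ is a smooth cut for $f$. I expect the structural step to be the crux: analyticity across regular points is a clean reflection argument, but pinning down the \emph{global} trajectory structure (finiteness of the $\gamma_j$, the correct local picture at the multiple points, and the bookkeeping of which sheets of the Riemann surface of $f$ are accessible from $\overline O$, which governs $E_0$) requires the full strength of quadratic-differential geometry. Existence and the $S$-property, by contrast, are soft --- compactness plus a one-parameter variation --- and uniqueness, granted the $S$-property, is a bounded-energy manipulation.
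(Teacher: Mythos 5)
Your architecture is, in outline, the same as the one the paper uses --- not for Theorem~S itself, which the paper does not prove but imports from \cite[Thm.~1 and 2]{St85} and \cite[Thm.~1]{St85b}, but for its weighted analogue Theorem~\ref{thm:minset} together with Proposition~\ref{prop:minset} and Theorem~\ref{thm:dmcc}: Hausdorff compactness plus an a priori geometric estimate for existence (Lemmas~\ref{lem:1haus}--\ref{lem:1b}), a local identification with a condenser problem for the symmetry property (Lemma~\ref{lem:1sym}), quadratic-differential trajectory theory for the smooth-cut structure, and a Dirichlet-integral comparison for uniqueness (Lemma~\ref{lem:1e}). Two of your steps, however, are asserted in a form in which they do not go through. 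First, in the existence argument your item (c), $\cp(K_\infty,T)\le\liminf_j\cp(K_j,T)$, is false for general Hausdorff limits: thin admissible sets of small condenser capacity can converge in $d_H$ to a fat set of strictly larger capacity, so semicontinuity in the needed direction is precisely what must be \emph{earned}. The paper earns it by first showing (Lemma~\ref{lem:1haus}, via a Zorn's lemma and separation argument) that every admissible set contains an admissible finite union of continua each joining at least two branch points --- hence of diameter, and capacity, bounded below by a constant depending only on $E_f$ --- then proving a quantitative H\"older-type continuity of the energy functional on this class (Lemma~\ref{lem:1cont}), and using the radial projection (Lemma~\ref{lem:1pr}) to keep the limit away from $T$. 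Your ``no collapse near branch points'' estimate addresses a different failure mode and does not supply any of this.

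Second, the first-variation derivation of \eqref{eq:mincondcap} is circular as written: to define the one-sided normal derivatives of $g$ on $K$ and to write the first variation of $\cp(K_t,T)$ as a boundary integral of the jump of $|\nabla g|^2$, you already need $K$ to be locally an analytic arc near $z_0$, yet you obtain the smooth-cut structure only \emph{afterwards}, as a consequence of the $S$-property. Breaking this circle is the real content of the theorem. The paper's Lemma~\ref{lem:1sym} (following Stahl) does it by showing that each component of a minimizer coincides, after a conformal map and a balayage/Dirichlet comparison on a level set of the potential, with Kuzmina's continuum of minimal condenser capacity through the relevant branch points, whose analytic-arc structure, equiangular multiple points, and symmetry are known a priori from the explicit quadratic differential of Theorem~\hyperref[thm:K]{K} in \cite{Kuz80}. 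What you offer as an equivalent parenthetical (``following \cite{St85b}\dots'') is in fact the only rigorous road here; the boundary-integral variation is a heuristic until the local structure is known. The remaining steps --- uniqueness via Dirichlet integrals with the symmetry killing the boundary terms along the arcs, and non-vanishing of the jump of $f$ because a superfluous arc carries positive equilibrium mass so its removal strictly decreases the capacity --- do match Lemmas~\ref{lem:1e} and~\ref{lem:1sym}; note only that Definition~\ref{df:dmcc}(ii) is not ``trivial'' but follows from Lemma~\ref{lem:1haus} combined with uniqueness.
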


Note that \eqref{eq:mincondcap} is independent of the orientation chosen on
$\gamma_j$ to define $\partial/\partial\n^\pm$. 
Property \eqref{eq:mincondcap} turns out to be more beneficial than Definition~\ref{df:dmcc} in the sense that all the forthcoming proofs use only \eqref{eq:mincondcap}. However, one does not achieve greater generality by relinquishing the connection to the condenser capacity and considering \eqref{eq:mincondcap} by itself as this property uniquely characterizes $\K$. Indeed, the following theorem is proved in Section~\ref{ss:53}.

\begin{thm}
\label{thm:dmcc}
The set of minimal condenser capacity for $f\in\alg(G)$ is uniquely characterized as a smooth cut for $f$ that satisfies \eqref{eq:mincondcap}.
\end{thm}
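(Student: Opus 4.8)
The plan is to prove Theorem~\ref{thm:dmcc} by establishing that any smooth cut $K\in\pk_f(G)$ satisfying the symmetry relation \eqref{eq:mincondcap} must coincide with the set $\K$ of minimal condenser capacity, whose existence, uniqueness, and smooth-cut structure are already guaranteed by Theorem~\hyperref[thm:S]{S}. Since Theorem~\hyperref[thm:S]{S} tells us that $\K$ itself is such a cut, it suffices to prove uniqueness: if $K_1$ and $K_2$ are two smooth cuts for $f$ satisfying \eqref{eq:mincondcap} (with $\omega_{(T,K_i)}$ the corresponding Green equilibrium distributions), then $K_1=K_2$. The natural tool is a variational argument with Dirichlet integrals, exactly as announced for Lemma~\ref{lem:1e}; indeed, this theorem is essentially the ``$\psi\equiv 0$'' or unweighted instance of the uniqueness half of Theorem~\ref{thm:minset}, so I would lean on the same machinery.

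First I would set up the comparison. Let $u_i := V_{\overline\C\setminus K_i}^{\omega_{(T,K_i)}} = g_{\overline\C\setminus K_i}(\cdot,\,\cdot)$-type potential, which is harmonic off $K_i\cup T$, vanishes on $K_i$, and is the Green potential of the Green equilibrium measure of $T$. The key structural fact I would use is that the condenser capacity $\cp(K_i,T)^{-1}$ equals the Green energy $I_{\overline\C\setminus K_i}[\omega_{(T,K_i)}]$, which in turn is a Dirichlet integral $\di_{\overline\C\setminus(K_i\cup T)}(u_i,u_i)$ up to normalization. The standard move (cf. the Gonchar--Rakhmanov / Stahl variational technique) is to consider the difference $u_1-u_2$ and integrate it against the measures, using Green's formula on the domain $\overline\C\setminus(K_1\cup K_2\cup T)$. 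The $S$-property \eqref{eq:mincondcap}, which says the normal derivative of $u_i$ matches from both sides across each analytic arc of $K_i$, is precisely what makes the boundary terms along the cuts either vanish or have a definite sign: on the portion of $K_1$ not contained in $K_2$, the function $u_2$ is superharmonic (it is a genuine Green potential there and positive), while $u_1=0$ on $K_1$, and the symmetry of $\partial u_1/\partial\n$ forces the flux contribution to have the sign that contradicts strict inequality of capacities unless the cuts agree. Running the argument symmetrically in the two sets yields $\cp(K_1,T)=\cp(K_2,T)$ and then, by the strict convexity of the Dirichlet/energy functional, $u_1\equiv u_2$, whence $K_1=K_2$ as their common zero set (using that across each arc the jump of $f$ is nontrivial, so the cuts are not removable).

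The main obstacle, I expect, is handling the geometry where the two candidate cuts overlap partially: $K_1\cap K_2$ may itself be a complicated union of arcs and points, so one must justify Green's formula on a domain whose boundary consists of analytic arcs traversed twice with possibly different normal derivatives, plus the finitely many branch points and Steiner-type vertices in $E_0\cup E_1$ where the cuts meet at angles. Near such vertices the potentials $u_i$ are continuous but their gradients need not be, so the integration-by-parts must be carried out on an exhausting sequence of domains with small disks removed around $E_f\cup E_1$, and one checks the excised contributions vanish in the limit (this uses that a Green potential has at worst a logarithmic—hence integrable—singularity and that near a branch-type vertex the gradient blows up no worse than a negative power $<1$). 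A secondary technical point is that $T$ itself carries the mass of $\omega_{(T,K_i)}$, so one works on $\overline\C\setminus(K_1\cup K_2)$ and treats $T$ as a level set where $u_i$ takes its (different, a priori) equilibrium constants; tracking these constants correctly is what ultimately produces the capacity comparison. Once these regularity-at-the-vertices issues are dispatched—most cleanly by quoting the relevant estimates from the appendix in Section~\ref{sec:pt}—the inequality chain closes and uniqueness, hence the theorem, follows; I would remark that this is the degenerate case of Lemma~\ref{lem:1e} and simply point to that proof rather than repeat it in full.
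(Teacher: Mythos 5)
Your overall architecture is the paper's: take existence, minimality and the smooth-cut structure of $\K$ from Theorem~\hyperref[thm:S]{S}, observe that $\K$ itself satisfies \eqref{eq:mincondcap}, and reduce the theorem to a uniqueness statement proved by the Dirichlet-integral variational argument of Lemma~\ref{lem:1e}. (The paper first transplants the problem to the unit disk by a conformal map $\Theta$, checking that the projected function lies in $\alg(\D)$ and that admissibility and \eqref{eq:mincondcap} are conformally invariant; you omit this reduction, which is a minor point.)

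The genuine gap is in the claim that this ``is the degenerate case of Lemma~\ref{lem:1e}'' and can be handled by ``simply pointing to that proof.'' First, the unweighted instance $\nu=\delta_0$ of Theorem~\ref{thm:minset} gives symmetry of $g_{\overline\C\setminus\Gamma}(\cdot,\infty)$ (Stahl's logarithmic-capacity problem), not of $V_{\overline\C\setminus\K}^{\omega_{(T,\K)}}$, so Theorem~\ref{thm:dmcc} is a parallel statement, not a special case. Second, and more seriously, in Lemma~\ref{lem:1e} the two competing potentials $V_s$ and $V_\nu$ are Green potentials of \emph{one and the same} measure $\widetilde\nu^*$ relative to two different domains, which is what makes their difference harmonic where both domains overlap and lets the energy comparison close with $\di_D(h)>0$. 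In your setting $u_1$ and $u_2$ are Green potentials of two \emph{different} measures $\omega_{(T,K_1)}$ and $\omega_{(T,K_2)}$, so $u_1-u_2$ is not harmonic near $T$ and the Lemma~\ref{lem:1e} computation does not transfer verbatim. The paper bridges this by inserting the auxiliary potential $V_{\overline\C\setminus\K}^{\omega_{(T,\Gamma)}}$ (the candidate's equilibrium measure swept through the minimal set's domain) and invoking the strict minimality of the Green energy, $\di_{D_\K}(V_\K)<\di_{D_\K}(V_{D_\K}^{\omega_{(T,\Gamma)}})$ whenever $\omega_{(T,\K)}\neq\omega_{(T,\Gamma)}$, to obtain the strict inequality $\cp(\Gamma,T)<\cp(\K,T)$ contradicting minimality; the degenerate case $\omega_{(T,\K)}=\omega_{(T,\Gamma)}$ must be treated separately by a harmonic-continuation argument showing the two potentials, hence their zero sets, coincide. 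Your closing move --- ``equality of capacities plus strict convexity gives $u_1\equiv u_2$'' --- sits exactly where this extra work is required and is not justified as written: equal capacities of two different condensers do not by themselves force equality of the equilibrium measures or of the potentials.
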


With all the necessary definitions at hand, the following result takes place.
\begin{thm}
\label{thm:L2T}
Let $T$ be a rectifiable Jordan curve with interior domain $G$ and exterior domain $O$. If $f\in\alg(G)$, then
\begin{equation}
\label{eq:exactrate}
\lim_{n\to\infty} \rho_{n,2}^{1/2n}(f,T) = \lim_{n\to\infty} \rho_{n,\infty}^{1/2n}(f,T) = \exp\left\{-\frac{1}{\cp(\K,T)}\right\},
\end{equation}
where $\K$ is set of minimal condenser capacity for $f$.
\end{thm}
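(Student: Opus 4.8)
The proof proceeds by establishing the upper and lower bounds in \eqref{eq:exactrate} separately, the two $L^2$-quantities being sandwiched by elementary comparison between $\rho_{n,2}$ and $\rho_{n,\infty}$. The upper bound for $\rho_{n,\infty}^{1/2n}$ is the content of Gonchar's conjecture, i.e. \eqref{eq:liminf} sharpened by the optimal choice $K=\K$; actually for the conclusion we want the full limit, so the crucial content is that $\limsup_n\rho_{n,\infty}^{1/n}(f,T)$ does not exceed $\exp\{-1/\cp(\K,T)\}$, which one gets from Walsh's interpolation estimate \eqref{eq:limsup} with $K=\K$ together with the fact that $\K$ has connected complement on which $f$ is meromorphic and single-valued. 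Combined with $\rho_{n,2}(f,T)\le C\,\rho_{n,\infty}(f,T)$ for a constant depending only on $T$ (since approximating in sup-norm on $T$ controls the $L^2$-norm on $T$), one obtains $\limsup_n\rho_{n,2}^{1/2n}(f,T)\le\exp\{-1/\cp(\K,T)\}$ as well.

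The heart of the matter is the matching lower bound $\liminf_n\rho_{n,2}^{1/2n}(f,T)\ge\exp\{-1/\cp(\K,T)\}$, which is the approximation-theoretic consequence of Theorem~\ref{thm:convcap}. The idea is that $H^2$-best rational approximants (after conformal transport of the problem from $T$ to $\T$, cf. Theorem~\ref{cor:L2T}) are multipoint Pad\'e interpolants to $f$; their denominators are non-Hermitian orthogonal polynomials with respect to a varying weight built from the interpolation points. By Theorem~\ref{thm:convcap}, the normalized counting measures of the poles converge weak-$*$ to $\omega_{(T,\K)}$, the Green equilibrium distribution of $T$ relative to $\overline\C\setminus\K$, and—since poles and interpolation points are reflected across the circle—so do the interpolation points (to the reflected measure). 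Feeding these limiting distributions into the standard error representation for the interpolation error of a Cauchy transform, the $n$-th root of the error behaves like $\exp\{-2\int g_{\overline\C\setminus\K}(z,\infty)\,d\mu(z)\}$-type quantities evaluated against the equilibrium potentials, and the potential-theoretic identity $\int\! V_{\overline\C\setminus\K}^{\omega_{(T,\K)}}d\omega_{(T,\K)}=1/\cp(\K,T)$ identifies the exponent. Concretely, one shows $\liminf_n\big(\|f-r_n\|_{2,T}\big)^{1/2n}\ge\exp\{-1/\cp(\K,T)\}$ using lower estimates for the $L^2$-norm in terms of the behaviour of the error function near $\K$, where $f$ is genuinely branched and hence $f-r_n$ cannot be small in a way that beats the capacity.

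**The main obstacle.** The delicate step is the lower bound, and within it two points require care. First, one must guarantee that enough of the interpolation points of the best approximant actually lie in the relevant region so that Theorem~\ref{thm:convcap} applies; this is exactly why the paper restricts to the Hardy setting, where maximal interpolation ($2n$ conditions) is known. Second, and more serious, is the identification of the weak-$*$ limit of the pole-counting measures with $\omega_{(T,\K)}$: this is not automatic from Theorem~\ref{thm:minset}, since a priori that theorem only tells us the minimal set in a \emph{weighted} problem, with weight equal to minus the potential of a limit point $\nu$ of the interpolation-point counting measures, has the $S$-property. One must run a fixed-point/self-consistency argument—using that poles are reflections of interpolation points across $\T$—to show the weighted equilibrium measure on the weighted-extremal set is, after reflection, the same $\nu$, which forces the weight to be the Green potential and the set to be $\K$ itself. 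Establishing this self-consistency, together with controlling possible spurious poles of the best approximant that do not cluster on $\K$, is where the bulk of the technical work lies; everything else is a packaging of Theorem~\ref{thm:minset}, Theorem~\ref{thm:convcap}, and classical estimates of Walsh and Gonchar--Parf\"enov--Prokhorov.
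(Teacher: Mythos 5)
Your lower bound is essentially the paper's argument: since $\rat_n(G)\subset E_n^2(G)$, one has $\rho_{n,2}(f,T)\geq\|f-g_n\|_{2,T}$ for the best meromorphic approximants $g_n$, and the meromorphic problem (unlike the rational one, which is not conformally invariant) transports to the $\bar H_0^2$-problem on the circle, where Corollary~\ref{cor:normconv} gives the exact limit $\exp\{-1/\cp(\K,T)\}$. Your ``main obstacle'' paragraph largely re-describes the proof of Theorem~\ref{thm:convcap}, which at this stage of the paper you may simply invoke.

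The genuine gap is in the upper bound. Walsh's estimate \eqref{eq:limsup} controls the \emph{$n$-th} root, so with $K=\K$ it yields only
\begin{equation*}
\limsup_{n\to\infty}\rho_{n,\infty}^{1/2n}(f,T)\ \leq\ \exp\left\{-\frac{1}{2\,\cp(\K,T)}\right\},
\end{equation*}
which is strictly larger than the target $\exp\{-1/\cp(\K,T)\}$, and \eqref{eq:liminf} is a statement about the limit \emph{inferior} only, so it cannot control any limsup. Your claim that Walsh gives $\limsup_n\rho_{n,\infty}^{1/n}\leq\exp\{-1/\cp(\K,T)\}$ and that this implies $\limsup_n\rho_{n,2}^{1/2n}\leq\exp\{-1/\cp(\K,T)\}$ loses a factor of $2$ in the exponent. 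With your ingredients one only sandwiches both $2n$-th root limits between $\exp\{-1/\cp(\K,T)\}$ and $\exp\{-1/(2\cp(\K,T))\}$, which proves neither equality in \eqref{eq:exactrate}. The doubling of the rate in the limsup is exactly the nontrivial content of \cite[Thm.~$1'$]{GRakh87}: it requires that $\K$ possess the $S$-property (which the paper extracts from Theorem~S via the decomposition \eqref{eq:toRemind}, exhibiting $\widehat\omega_{(T,\K)}$ as a weighted equilibrium measure) together with the non-Hermitian orthogonality analysis of Theorem~GR, extended from holomorphic to meromorphic $f$ as explained in the proof of Theorem~\ref{thm:mpa}. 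Once $\lim_n\rho_{n,\infty}^{1/2n}(f,T)=\exp\{-1/\cp(\K,T)\}$ is in hand, the elementary comparison $\rho_{n,2}(f,T)\leq|T|\,\rho_{n,\infty}(f,T)$ does finish the upper bound for $\rho_{n,2}$.
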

The second equality in \eqref{eq:exactrate} follows from \cite[Thm $1^\prime$]{GRakh87}, where a larger class of functions than $\alg(G)$  is considered (see Theorem~\hyperref[thm:GR]{GR} in Section \ref{ss:52}). To prove the first equality, we appeal to another type of approximation, namely, \emph{meromorphic} approximation in $L^2$-norm on $T$, for which asymptotics of the error and the poles are obtained below. This type of approximation turns out to be useful in certain inverse source problems \cite{BLM06,LPadRigZgh08,uClLMPap}. Observe that $|T|^{1/p-1/2}\|h\|_{2,T}\leq \|h\|_{p,T}\leq|T|^{1/p}\|h\|_T$ for any $p\in(2,\infty)$ and any bounded function $h$ on $T$ by  H\"older inequality, where $\|\cdot\|_{p,T}$ is the usual $p$-norm on $T$ with respect to $ds$ and $|T|$ is the arclength of $T$. Thus, Theorem~\ref{thm:L2T} implies that \eqref{eq:exactrate} holds for $L^p(T)$-best rational approximants as well when $p\in(2,\infty)$. In fact, as Vilmos Totik pointed out to the authors \cite{Vilmos}, with a different method of proof Theorem~\ref{thm:L2T} can be extended to include the full range $p\in[1,\infty]$.

Just mentioned best meromorphic approximants are defined as follows. Denote by $E^2(G)$ the Smirnov class\footnote{A function $h$ belongs to $E^2(G)$ if $h$ is holomorphic in $G$ and there exists a sequence of rectifiable Jordan curves, say $\{T_n\}$, whose interior domains exhaust $G$, such that $\|h\|_{2,T_n}\leq\const$ independently of $n$.} for $G$ \cite[Sec. 10.1]{Duren}. It is known that functions in $E^2(G)$ have non-tangential boundary values a.e. on $T$ and thus formed traces of functions in $E^2(G)$ belong to $L^2(T)$.  Now, put $E_n^2(G):=E^2(G)\mpoly^{-1}_n(G)$ to be the set of meromorphic functions in $G$ with at most $n$ poles there and square-summable traces on $T$. It is known \cite[Sec. 5]{BMSW06} that for each $n\in\N$ there exists $g_n\in E_n^2(G)$ such that
\[
\|f-g_n\|_{2,T} = \inf\left\{\|f-g\|_{2,T}:~g\in E_n^2(G)\right\}.
\]
That is, $g_n$ is a best meromorphic approximant for $f$ in the $L^2$-norm on $T$.
\begin{thm}
\label{cor:L2T}
Let $T$ be a rectifiable Jordan curve with interior domain $G$ and exterior domain $O$. If $f\in\alg(G)$, then
\begin{equation}
\label{ErrorBestMer}
|f-g_n|^{1/2n} \cic \exp\left\{V_G^{\omega_{(\K,T)}}-\frac{1}{\cp(\K,T)}\right\} \quad \mbox{in} \quad G\setminus\K,
\end{equation}
where the functions $g_n\in E^2_n(G)$ are best meromorphic approximants to $f$ in the $L^2$-norm on $T$, $\K$ is the set of minimal condenser capacity for $f$ in $G$, $\omega_{(\K,T)}$ is the Green equilibrium distribution on $\K$ relative to $G$, and $\cic$ denotes convergence in capacity (see Section~\ref{sss:lc}). Moreover, the counting measures of the poles of $g_n$ converge weak$^*$ to $\omega_{(\K,T)}$.
\end{thm}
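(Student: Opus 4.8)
The plan is to transport the problem to the unit disk by a conformal map and then to invoke Theorem~\ref{thm:convcap}, after observing that $L^2$-best meromorphic approximation is, up to a freely recoverable analytic part, $L^2$-best rational approximation. Fix a conformal map $\psi\colon\D\to G$ and put $\Phi:=\psi^{-1}$. Since $T$ is rectifiable, $\psi$ extends to an absolutely continuous homeomorphism of $\overline\D$ onto $\overline G$ with $\psi'\in E^1(\D)$ and $ds=|\psi'(e^{i\theta})|\,d\theta$ on $\T$, and $g\mapsto(g\circ\psi)(\psi')^{1/2}$, with $(\psi')^{1/2}$ the single-valued square root of the zero-free function $\psi'$, is an isometry of $E^2(G)$ onto $H^2$ \cite[Ch.~10]{Duren}. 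Since, for any monic polynomial $\prod_j(z-b_j)$ with $b_j\in G$, it and $\prod_j(\Phi(z)-\Phi(b_j))$ differ by a factor holomorphic, bounded and bounded away from $0$ on $\overline G$, this isometry carries $E_n^2(G)$ onto $H_n^2$; hence, writing $\tilde f:=(f\circ\psi)(\psi')^{1/2}\in L^2(\T)$, the function $g_n$ is $L^2(T)$-best from $E_n^2(G)$ if and only if $\tilde g_n:=(g_n\circ\psi)(\psi')^{1/2}$ is $L^2(\T)$-best from $H_n^2$, and the errors coincide.

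Decompose $L^2(\T)=H^2\oplus H^{2,\perp}$, where $H^{2,\perp}$ is the closed span of $z^{-1},z^{-2},\dots$, and split $\tilde f=\tilde f_++\tilde f_-$ accordingly. By partial fractions each element of $H_n^2$ is $h+r$ with $h\in H^2$ and $r\in\rat_n(\D)\subset H^{2,\perp}$, and conversely; as $\tilde f_+-h\perp\tilde f_--r$, the squared error splits as $\|\tilde f_+-h\|_{2,\T}^2+\|\tilde f_--r\|_{2,\T}^2$, so $\tilde g_n=\tilde f_++\tilde r_n$, where $\tilde r_n$ is the $H^2$-best degree-$n$ rational approximant to $\tilde f_-$; in particular $\tilde g_n$ and $\tilde r_n$ have the same poles and $\tilde f-\tilde g_n=\tilde f_--\tilde r_n$. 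Now $f\in\alg(G)$ is holomorphic near $T$ and single-valued meromorphic in $G\setminus\K$ with branch points in $\K$; transporting by $\psi$ and subtracting $\tilde f_+$, which is holomorphic in $\D$, one finds that $\tilde f_-$ continues from $\Om$ across $\T$ to a single-valued meromorphic function in $\D\setminus\Phi(\K)$ having finitely many, at least one, branch points, all lying in $\Phi(\K)$; thus $\tilde f_-\in\alg(\D)$. The map $K\mapsto\Phi(K)$ is a bijection of $\pk_f(G)$ onto $\pk_{\tilde f_-}(\D)$ (the admissibility conditions transport verbatim, again because $\tilde f_\pm$ glue to $(f\circ\psi)(\psi')^{1/2}$), and condenser capacity is a conformal invariant, so $\cp(\Phi(K),\T)=\cp(K,T)$ for all such $K$; hence $\Phi(\K)$ is the set of minimal condenser capacity for $\tilde f_-$ relative to $\D$.

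Apply Theorem~\ref{thm:convcap} to $\tilde f_-$: if $\nu_n$ denotes the normalized counting measure of the poles of $\tilde r_n$, then $\nu_n\cws\omega_{(\Phi(\K),\T)}$ and $|\tilde f_--\tilde r_n|^{1/2n}\cic\exp\{V_\D^{\omega_{(\Phi(\K),\T)}}-1/\cp(\Phi(\K),\T)\}$ in $\D\setminus\Phi(\K)$. Transporting back, the normalized counting measure of the poles of $g_n$ is $\psi_*\nu_n$, hence it converges weak$^*$ to $\psi_*\omega_{(\Phi(\K),\T)}$; since $g_G(z,u)=g_\D(\Phi(z),\Phi(u))$, Green energies, hence Green equilibrium distributions, are conformally invariant, this limit is $\omega_{(\K,T)}$, and the same substitution gives $V_\D^{\omega_{(\Phi(\K),\T)}}\circ\Phi=V_G^{\omega_{(\K,T)}}$ together with $\cp(\Phi(\K),\T)=\cp(\K,T)$. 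Finally $f-g_n=\big((\tilde f_--\tilde r_n)/(\psi')^{1/2}\big)\circ\Phi$ in $G\setminus\K$, and since $|\psi'\circ\Phi|^{-1/(4n)}\to1$ locally uniformly in $G$ while logarithmic capacity is comparable under the locally bi-Lipschitz map $\Phi$, so that $\cic$ is preserved, we conclude $|f-g_n|^{1/2n}\cic\exp\{V_G^{\omega_{(\K,T)}}-1/\cp(\K,T)\}$ in $G\setminus\K$, which is \eqref{ErrorBestMer}.

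The substantive ingredient is Theorem~\ref{thm:convcap}; granting it, what remains is bookkeeping whose one delicate feature is that $T$ is merely rectifiable. One must check that the $E^2\to H^2$ isometry truly restricts to a bijection $E_n^2(G)\to H_n^2$ (the polar part is transported only up to a bounded invertible factor, which must be shown harmless for the best-approximation problem), that $\tilde f_-$ genuinely continues across $\T$ with branch set inside $\Phi(\K)$ and that $\pk_f(G)$ and $\pk_{\tilde f_-}(\D)$ correspond, and that convergence in capacity survives transport under $\Phi$. All of these rest on the classical $E^p$-space machinery — boundary values, the arclength correspondence, conformal distortion estimates for capacity — rather than on any smoothness of $T$, so they demand care but no new idea.
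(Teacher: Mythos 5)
Your proposal is correct and follows essentially the same route as the paper: transport by the conformal map $\Theta=\psi$ using the isometry $g\mapsto(g\circ\psi)(\psi')^{1/2}$ of $E_n^2(G)$ onto $H_n^2$, split off the analytic part via $L^2=H^2\oplus\bar H_0^2$ so that the problem reduces to $\bar H_0^2$-best rational approximation of $\tilde f_-\in\alg(\D)$, apply Theorem~\ref{thm:convcap}, and pull back using conformal invariance of Green quantities together with the local bi-Lipschitz control of capacity. The delicate points you flag (the polar-part correspondence up to a bounded invertible factor, the continuation of $\tilde f_-$, and the transport of convergence in capacity) are exactly the ones the paper addresses, in the same way.
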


\section{$\bar H_0^2$-Rational Approximation}
\label{sec:ra}

To prove Theorems~\ref{thm:L2T} and~\ref{cor:L2T}, we derive a stronger result in the model case where $G$ is the unit disk, $\D$. The strengthening comes from the facts that in this case $L^2$-best meromorphic approximants specialize to $L^2$-best rational approximants the latter also turn out to be interpolants.  In fact, we consider not only best rational approximants but also critical points in rational approximation.

Let $\T$ be the unit circle and set for brevity $L^2:=L^2(\T)$. Denote by $H^2\subset L^2$ the Hardy space of functions whose Fourier coefficients with strictly negative indices are zero. The space $H^2$ can be described as the set of traces of holomorphic functions in the unit disk whose square-means on concentric circles centered at zero are uniformly bounded above\footnote{Each such function has non-tangential boundary values almost everywhere on $\T$ and can be recovered from these boundary values by means of the Cauchy or Poisson integral.} \cite{Duren}. Further, denote by $\bar H_0^2$ the orthogonal complement of $H^2$ in $L^2$, $L^2=H^2\oplus\bar H_0^2$, with respect to the standard scalar product
\[
\langle f,g \rangle := \int_\T f(\tau)\overline{g(\tau)}|d\tau|, \quad f,g\in L^2.
\]
From the viewpoint of analytic function theory, $\bar H_0^2$ can be regarded as a space of traces of functions holomorphic in $\Om:=\overline\C\setminus\overline\D$ and vanishing at infinity whose square-means on the concentric circles centered at zero (this time with radii greater then 1) are uniformly bounded above. In what follows, we denote by $\|\cdot\|_2$ the norm on $L^2$ induced by the scalar product $\langle\cdot,\cdot\rangle$. In fact, $\|\cdot\|_2$ is a norm on $H^2$ and $\bar H_0^2$ as well.

We set $\mpoly_n:=\mpoly_n(\D)$ and $\rat_n:=\rat_n(\D)$. Observe that $\rat_n$ is the set of rational functions of degree at most $n$ belonging to $\bar H^2_0$. With the above notation, consider the following $\bar H_0^2$-rational approximation problem: 

\smallskip
\noindent
{\it Given $f\in\bar H_0^2$ and $n\in\N$, minimize $\|f-r_n\|_2$ over all $r\in\rat_n$.}
\smallskip

\noindent
It is well-known (see \cite[Prop. 3.1]{B06} for the proof and an extensive bibliography on the subject) that this minimum is always attained while any minimizing rational function, also called {\it a best rational approximant} to $f$, lies in $\rat_n\setminus\rat_{n-1}$ unless $f\in\rat_{n-1}$. 

Best rational approximants are part of the larger class of \emph{critical points} in $\bar H_0^2$-rational approximation. From the computational viewpoint, critical points are as important as best approximants  since a numerical search is more likely to yield a locally best rather than a best approximant. For fixed $f\in\bar H^2_0$, critical points can be defined as follows. Set
\begin{equation}
\label{eq:ef}
\begin{array}{rll}
\eba_{f,n}:\poly_{n-1}\times\mpoly_n &\to& [0,\infty) \\
(p,q) &\mapsto& \|f-p/q\|_2^2.
\end{array}
\end{equation}
In other words, $\eba_{f,n}$ is the squared error of approximation of $f$ by $r=p/q$ in $\rat_n$. We topologically identify $\poly_{n-1}\times\mpoly_n$ with an open subset of $\C^{2n}$ with coordinates $p_j$ and $q_k$, $j,k\in\{0,\ldots,n-1\}$ (see \eqref{eq:rat}). Then a pair of polynomials $(p_c,q_c)\in\poly_{n-1}\times\mpoly_n$, identified with a vector in $\C^{2n}$, is said to be a {\it critical pair of order} $n$, if all the partial derivatives of $\eba_{f,n}$ do vanish at $(p_c,q_c)$. Respectively, a rational function $r_c\in\rat_n$ is a {\it critical point of order} $n$ if it can be written as the ratio $r_c=p_c/q_c$ of a critical pair $(p_c,q_c)$ in $\poly_{n-1}\times\mpoly_n$. A particular example of a critical point is a {\it locally best approximant}. That is, a rational function $r_l=p_l/q_l$ associated with a pair $(p_l,q_l)\in\poly_{n-1}\times\mpoly_n$ such that $\eba_{f,n}(p_l,q_l)\leq\eba_{f,n}(p,q)$ for all pairs $(p,q)$ in some neighborhood of $(p_l,q_l)$ in $\poly_{n-1}\times\mpoly_n$. We call a critical point of order $n$ \emph{irreducible} if it belongs to $\rat_n\setminus\rat_{n-1}$. As we have already mentioned, best approximants, as well as local minima, are always irreducible critical points unless $f\in\rat_{n-1}$. In general there may be other critical points, reducible or irreducible, which are  saddles or maxima.  In fact,  to give  amenable  conditions for uniqueness of a critical point it is  a fairly open problem of great practical importance, see  \cite{B_CMFT99,BSW96,BY_RTOPAT10} and the bibliography therein.

One of the most important properties of critical points is the fact that they are ``maximal'' rational interpolants. More precisely, let $f\in\bar H^2_0$ and $r_n$ be an irreducible critical point of order $n$, then \emph{$r_n$ interpolates $f$ at the reflection ($z\mapsto1/\bar z$) of each pole of $r_n$ with order twice the multiplicity that pole} \cite{Lev69}, \cite[Prop. 2]{BY_RTOPAT10}, which is the maximal number of interpolation conditions ({\it i.e., $2n$}) that can be imposed in general on a rational function of type $(n-1,n)$ (i.e., the ratio of a polynomial of degree $n-1$ by a polynomial of degree~$n$).

With all the definitions at hand, we are ready to state our main results concerning the behavior of critical points in $\bar H_0^2$-rational approximation for functions in $\alg(\D)$, which will be proven in Section~\ref{ss:53}.
\begin{thm}
\label{thm:convcap}
Let $f\in\alg(\D)$ and $\{r_n\}_{n\in\N}$ be a sequence of irreducible critical points in $\bar H_0^2$-rational approximation for $f$. Further, let $\K$ be the set of minimal condenser capacity for $f$. Then the normalized counting measures\footnote{The normalized counting measure of poles/zeros of a given function is a probability measure having equal point masses at each pole/zero of the function counting multiplicity.} of the poles of $r_n$ converge weak$^*$ to the Green equilibrium distribution on $\K$ relative to $\D$, $\ged$. Moreover, it holds that
\begin{equation}
\label{eq:Convergence1}
|(f-r_n)|^{1/2n} \cic \exp\left\{-V_{\overline\C\setminus\K}^{\omega^*_{(\K,\T)}}\right\} \quad \mbox{in} \quad \overline\C\setminus(\K\cup \K^*),
\end{equation}
where $\K^*$ and $\omega_{(\K,\T)}^*$ are the reflections\footnote{For every set $K$ we define the reflected set $K^*$ as $K^*:=\{z:~1/\bar z\in K\}$. If $\omega$ is a Borel measure in $\overline\C$, then $\omega^*$ is a measure such that $\omega^*(B)=\omega(B^*)$ for every Borel set $B$.} of $\K$ and $\ged$ across $\T$, respectively, and $\cic$ denotes convergence in capacity. In addition, it holds that
\begin{equation}
\label{eq:Convergence2}
\limsup_{n\to\infty}|(f-r_n)(z)|^{1/2n} \leq \exp\left\{-V_{\overline\C\setminus\K}^{\omega^*_{(\K,\T)}}(z)\right\}
\end{equation}
uniformly for $z\in\overline\Om$.
\end{thm}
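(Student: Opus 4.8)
The plan is to reduce the statement to the multipoint Padé framework of \cite{GRakh87} by choosing the right weight, and then to unwind the weighted equilibrium potential into the Green equilibrium potential using the interpolation structure of critical points. First I would invoke the maximal interpolation property recalled in Section~\ref{sec:ra}: an irreducible critical point $r_n$ of order $n$ interpolates $f$ at the $2n$ points obtained by reflecting the poles of $r_n$ across $\T$ (with doubled multiplicity), so $r_n$ is a multipoint Padé interpolant of $f$ with interpolation array lying in $\overline\D$. Passing to a subsequence, I would let $\nu$ be a weak$^*$ limit of the normalized counting measures of these interpolation points; since the poles of $r_n$ lie in $\D$ (by the structure of best/critical approximants and genericity of $\alg(\D)$, or rather because eventually they accumulate only on $\K$), $\nu$ is a probability measure supported in $\overline\D$, and by reflection the limit of the pole-counting measures is $\nu^*$. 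The weight is then taken to be $\psi := -V^{\nu}$ restricted to the relevant sets, i.e.\ minus the logarithmic potential of the limiting interpolation measure.

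Next I would apply Theorem~\ref{thm:minset}: for this field $\psi$ there exists a unique minimal set $\Gamma_\nu$ for Problem $(f,\nu)$, it is a smooth cut for $f$, and it possesses the $S$-property in the field $\psi$. Feeding the $S$-property and the existence of $\Gamma_\nu$ into the machinery of \cite{GRakh87} (cf.\ Theorem~\hyperref[thm:GR]{GR} in Section~\ref{ss:52}), the non-Hermitian orthogonality satisfied by the denominators of the Padé interpolants forces the pole-counting measures of $r_n$ to converge weak$^*$ to the weighted equilibrium distribution $\omega_{\Gamma_\nu,\psi}$, and yields the $n$-th root asymptotics $|f-r_n|^{1/2n}\cic\exp\{-(\text{potential-type expression in }\nu\text{ and }\omega_{\Gamma_\nu,\psi})\}$ off $\Gamma_\nu\cup\Gamma_\nu^*$, together with the corresponding $\limsup$ bound on $\overline\Om$. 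At this stage everything is expressed through the a priori unknown $\nu$, so the remaining task is a self-consistency (fixed-point) argument.

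The heart of the matter is to identify $\nu$, $\Gamma_\nu$, and $\omega_{\Gamma_\nu,\psi}$. Here I would use the reflection symmetry peculiar to $\bar H_0^2$-rational approximation: interpolation points are exactly the reflections of the poles, so $\nu = (\omega_{\Gamma_\nu,\psi})^*$. Writing down the variational (equilibrium) conditions for $\omega_{\Gamma_\nu,\psi}$ in the field $\psi=-V^\nu=-V^{\omega_{\Gamma_\nu,\psi}^*}$ and combining the potential of a measure with the potential of its reflection across $\T$ — which, by the standard identity $V^{\mu^*}(z) = V^\mu(1/\bar z)$ and the Green function of $\D$, assembles into the Green potential $V_\D^{\,\cdot}$ relative to the disk — the weighted equilibrium problem on $\Gamma_\nu$ collapses into the Green equilibrium problem for $\Gamma_\nu$ relative to $\D$. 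Thus $\Gamma_\nu$ becomes a smooth cut for $f$ satisfying the Green-potential $S$-property \eqref{eq:mincondcap}, and by Theorem~\ref{thm:dmcc} it must coincide with $\K$; consequently $\omega_{\Gamma_\nu,\psi}=\ged$ and $\nu=\omega_{(\K,\T)}^*$. Substituting back gives the pole distribution $\ged$ and turns the error asymptotics into $|f-r_n|^{1/2n}\cic\exp\{-V_{\overline\C\setminus\K}^{\omega^*_{(\K,\T)}}\}$ on $\overline\C\setminus(\K\cup\K^*)$, with the $\limsup$ inequality on $\overline\Om$ following because there $V_{\overline\C\setminus\K}^{\omega^*_{(\K,\T)}}$ is harmonic and the convergence-in-capacity estimate upgrades to a locally uniform upper bound by a standard maximum-principle / subharmonicity argument. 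Since every subsequential limit $\nu$ is forced to equal $\omega_{(\K,\T)}^*$, the full sequences converge.

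I expect the main obstacle to be the third step: carefully justifying that the limiting interpolation measure $\nu$ has no mass escaping onto $\T$ (so that the reflection identities and the results of \cite{GRakh87} apply cleanly), and then rigorously matching the weighted equilibrium conditions with the Green equilibrium conditions — i.e.\ proving the fixed-point identity $\nu=\omega_{(\K,\T)}^*$ rather than merely that some weighted-extremal object exists. This is exactly where the $S$-property from Theorem~\ref{thm:minset}, the uniqueness in Theorem~\ref{thm:dmcc}, and the reflection symmetry of $\bar H_0^2$-approximation must be made to interlock.
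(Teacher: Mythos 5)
Your treatment of the pole asymptotics and of \eqref{eq:Convergence1} follows essentially the same route as the paper: maximal interpolation turns $r_n$ into a multipoint Pad\'e interpolant, a subsequential limit $\nu$ of the counting measures feeds into the weighted extremal problem of Theorem~\ref{thm:minset} and the machinery of \cite{GRakh87} (this is precisely Theorem~\ref{thm:mpa}), and the fixed-point identity forced by the pole/interpolation-point reflection symmetry collapses the weighted equilibrium problem into the Green one (this is Lemma~\ref{lem:pt}), after which Theorem~\ref{thm:dmcc} identifies $\Gamma_\nu$ with $\K$. Up to your swapped naming of $\nu$ versus $\nu^*$ (the interpolation points lie in $\overline\Om$, not $\overline\D$; it is the pole-counting measures whose limit is supported in $\overline\D$), this part is sound.

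The genuine gap is in your derivation of \eqref{eq:Convergence2}. You propose to upgrade the convergence in capacity of \eqref{eq:Convergence1} to a uniform $\limsup$ bound on $\overline\Om$ ``because there $V_{\overline\C\setminus\K}^{\omega^*_{(\K,\T)}}$ is harmonic'' and by a maximum-principle argument. But $\K^*\subset\Om$, so $\overline\Om$ is \emph{not} contained in the region $\overline\C\setminus(\K\cup\K^*)$ where \eqref{eq:Convergence1} holds, and the Green potential $V_{\overline\C\setminus\K}^{\omega^*_{(\K,\T)}}$ is only superharmonic on $\overline\Om$ (it is the potential of a measure supported on $\K^*$), not harmonic. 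No subharmonicity or two-constants argument starting from convergence in capacity off $\K^*$ can produce the required bound \emph{on} $\K^*$; and the cheap alternative --- bounding $|f-r_n|$ on $\overline\Om$ by its maximum on $\T$ --- only yields the constant $\exp\{-1/\cp(\K,\T)\}$, which is strictly weaker than \eqref{eq:Convergence2} away from $\T$ since the potential exceeds $1/\cp(\K,\T)$ near $\K^*$. The missing ingredient is an explicit factorization of the error: writing $q_n$ for the denominator and $b_n=q_n/\widetilde q_n$ for the associated Blaschke product, one has $(f-r_n)(z)=\overline{b_n^2(1/\bar z)}\,(l_{n,m}A_{n,m}/Q)(z)$ on $\overline\Om$. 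The factor $A_{n,m}$ converges in capacity to a constant in all of $\overline\C\setminus\K$, so Lemma~\ref{lem:cic} gives a uniform $\limsup$ for it on $\overline\Om$; the extra decay near $\K^*$ encoded in \eqref{eq:Convergence2} comes entirely from the reflected Blaschke factor, which must be estimated by the principle of descent applied to the (suitably truncated) zero-counting measures of $q_n$ --- and it is exactly the principle of descent, not harmonicity, that copes with the fact that the limiting potential may be large (even infinite) on $\K^*$. Without this step your argument does not establish \eqref{eq:Convergence2}.
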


Using the fact that the Hardy space $H^2$ is orthogonal to $\bar H_0^2$, one can show that $L^2$-best meromorphic approximants discussed in Theorem~\ref{cor:L2T} specialize to $L^2$-best rational approximants when $G=\D$ (see the proof of Theorem~\ref{cor:L2T}). Moreover, it is shown in Lemma~\ref{lem:pt} in Section~\ref{sec:pt} that $\displaystyle -V_{\overline\C\setminus\K}^{\omega_{(\K,\T)}^*} \equiv V_\D^{\omega_{(\K,\T)}}-1/\cp(\K,\T)$ in $\overline\D$. So, formula \eqref{ErrorBestMer} is, in fact,  a generalization of \eqref{eq:Convergence1}, but only in $G\setminus\K$. Lemma~\ref{lem:pt} also implies that $V_{\overline\C\setminus\K}^{\omega_{(\K,\T)}^*}\equiv1/\cp(\K,\T)$ on $\T$. In particular, the following corollary to Theorem \ref{thm:convcap} can be stated.

\begin{cor}
\label{cor:normconv}
Let $f$, $\{r_n\}$, and $\K$ be as in Theorem \ref{thm:convcap}. Then
\begin{equation}
\label{eq:Convergence3}
\lim_{n\to\infty} \|f-r_n\|_2^{1/2n} = \lim_{n\to\infty}\|f-r_n\|_\T^{1/2n} = \exp\left\{-\frac{1}{\cp(\K,\T)}\right\},
\end{equation}
where $\|\cdot\|_\T$ stands for the supremum norm on $\T$.
\end{cor}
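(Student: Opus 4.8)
The plan is to sandwich both $\|f-r_n\|_2^{1/2n}$ and $\|f-r_n\|_\T^{1/2n}$ between the same lower and upper bounds, reducing everything to the two error estimates of Theorem~\ref{thm:convcap} together with the identity $V_{\overline\C\setminus\K}^{\omega_{(\K,\T)}^*}\equiv1/\cp(\K,\T)$ on $\T$ recorded just above the corollary (a consequence of Lemma~\ref{lem:pt}). Write $c:=\exp\{-1/\cp(\K,\T)\}$. Since $\K$ is compact in $\D$, its reflection $\K^*$ is compact in $\Om$, so $\T$ stays bounded away from $\K\cup\K^*$ and hence lies in the region where \eqref{eq:Convergence1} applies; moreover $f$ is holomorphic in a neighbourhood of $\overline\Om$ and, for each fixed $n$, so is $r_n$, so $f-r_n$ is holomorphic near $\T$ and every norm below is finite.

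For the upper bounds I would invoke the uniform estimate \eqref{eq:Convergence2}. On $\T$ we have $V_{\overline\C\setminus\K}^{\omega_{(\K,\T)}^*}\equiv1/\cp(\K,\T)$, so \eqref{eq:Convergence2} yields, for every $\delta>0$, $\|f-r_n\|_\T^{1/2n}\le c+\delta$ once $n$ is large; hence $\limsup_n\|f-r_n\|_\T^{1/2n}\le c$. Because $\|h\|_2\le(2\pi)^{1/2}\|h\|_\T$ for every $h$ bounded on $\T$ and $(2\pi)^{1/4n}\to1$, this also gives $\limsup_n\|f-r_n\|_2^{1/2n}\le c$.

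For the lower bounds I would use the convergence in capacity \eqref{eq:Convergence1}, which on $\T$ says $|f-r_n|^{1/2n}\cic c$. Fix $\epsilon\in(0,c)$ and set $B_n:=\{\tau\in\T:\ |(f-r_n)(\tau)|^{1/2n}<c-\epsilon\}$; then $\cp(B_n)\to0$, and since a subset of $\T$ of small logarithmic capacity has small arclength (a classical fact of potential theory), the arclength $|B_n|$ of $B_n$ tends to $0$. Consequently, for all sufficiently large $n$,
\[
\|f-r_n\|_2^2=\int_\T|f-r_n|^2|d\tau|\ge\int_{\T\setminus B_n}|f-r_n|^2|d\tau|\ge\bigl(2\pi-|B_n|\bigr)(c-\epsilon)^{4n}\ge\pi\,(c-\epsilon)^{4n},
\]
whence $\liminf_n\|f-r_n\|_2^{1/2n}\ge c-\epsilon$; letting $\epsilon\downarrow0$ gives $\liminf_n\|f-r_n\|_2^{1/2n}\ge c$. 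Finally, $\|h\|_\T\ge(2\pi)^{-1/2}\|h\|_2$ implies $\liminf_n\|f-r_n\|_\T^{1/2n}\ge c$. Combining with the previous paragraph, all four one-sided limits equal $c$, which is \eqref{eq:Convergence3}.

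Granted Theorem~\ref{thm:convcap}, this deduction is short; the only non-formal ingredient is the last inference of the third paragraph, the passage from convergence in capacity on $\T$ to a lower bound for an $L^2$-integral, which rests on the elementary potential-theoretic estimate that a subset of $\T$ with small logarithmic capacity has small linear measure (cf.\ Section~\ref{sec:pt}). I would also take a moment to record why $\T$ lies inside the domains of validity of \eqref{eq:Convergence1}, \eqref{eq:Convergence2} and Lemma~\ref{lem:pt}, which is immediate from the compactness of $\K$ in $\D$.
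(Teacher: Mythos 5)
Your proposal is correct and follows essentially the same route as the paper: the upper bound comes from \eqref{eq:Convergence2} together with the constancy of $V_{\overline\C\setminus\K}^{\omega_{(\K,\T)}^*}$ on $\T$ (Lemma~\ref{lem:pt}), and the lower bound comes from \eqref{eq:Convergence1} on $\T$ plus the fact that a subset of $\T$ of small capacity has small arclength (the paper cites \cite[Thm. 5.3.2(d)]{Ransford}), with the two norms linked by the elementary comparison $\|h\|_2\leq\const\|h\|_\T$. No gaps.
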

Observe that Corollary~\ref{cor:normconv} strengthens Theorem~\ref{thm:L2T} in the case when $T=\T$. Indeed, \eqref{eq:Convergence3} combined with \eqref{eq:exactrate} implies that the critical points in ${\bar H}_0^2$-rational approximation also provide the best rate of uniform approximation in the $n$-th root sense for $f$ on $\overline\Om$.

\section{Domains of Minimal Weighted Capacity}
\label{sec:min}

Our approach to Theorem~\ref{thm:convcap} lies in exploiting the interpolation properties of the critical points in $\bar H^2_0$-rational approximation. To this end we first study the behavior of rational interpolants with predetermined interpolation points (Theorem~\ref{thm:mpa} in Section~\ref{sec:pade}). However, before we are able to touch upon the subject of rational interpolation proper, we need to identify the corresponding minimal sets. These sets are the main object of investigation in this section.

Let $\nu$ be a probability Borel measure supported in $\overline\D$. We set
\begin{equation}
\label{eq:unu}
U^\nu(z) := -\int\log|1-z\bar u|d\nu(u).
\end{equation}
The function $U^\nu$ is simply the spherically normalized logarithmic potential of $\nu^*$, the reflection of $\nu$ across $\T$ (see \eqref{eq:sphpot}). Hence, it is a harmonic function outside of $\supp(\nu^*)$, in particular, in $\D$. Considering $-U^\nu$ as an {\it external field} acting on non-polar compact subsets of $\D$, we define the weighted capacity in the usual manner (Section~\ref{sss:wc}). Namely, for such a set $K\subset\D$, we define the $\nu$-capacity of $K$ by
\begin{equation}
\label{eq:wcap}
\cp_\nu(K) := \exp\left\{-I_\nu[K]\right\}, \quad I_\nu[K] := \min_\omega\left(I[\omega]-2\int U^\nu d\omega\right),
\end{equation}
where the minimum is taken over all probability Borel measures $\omega$
supported on $K$ (see Section~\ref{sss:lc} for the definition of energy $I[\cdot]$). Clearly, $U^{\delta_0}\equiv0$ and therefore $\cp_{\delta_0}(\cdot)$ is simply the classical logarithmic capacity (Section~\ref{sss:lc}), where $\delta_0$ is the Dirac delta at the origin.

The purpose of this section is to extend results in \cite{St85, St85b} obtained for $\nu=\delta_0$. For that, we introduce a notion of a \emph{minimal set} in a weighted context. This generalization is the key enabling us to adapt the results of \cite{GRakh87} to the present situation, and its study is really the technical core of the paper. For simplicity, we put $\pk_f:=\pk_f(\D)$.

\begin{df}
\label{df:minset}
Let $\nu$ be a probability Borel measure supported in $\overline\D$. A compact $\Gamma_\nu\in\pk_f$, $f\in\alg(\D)$, is said to be a minimal set for Problem $(f,\nu)$ if
\begin{itemize}
\item [(i)] $\cp_\nu(\Gamma_\nu)\leq\cp_\nu(K)$ for any  $K\in\pk_f$; \smallskip
\item [(ii)] $\Gamma_\nu\subset\Gamma$ for any $\Gamma\in\pk_f$ such that $\cp(\Gamma)=\cp(\Gamma_\nu)$.
\end{itemize}
\end{df}

The set $\Gamma_\nu$ will turn out to have geometric properties similar to those of minimal condenser capacity sets (Definition \ref{df:dmcc}). This motivates the following definition.

\begin{df}
\label{df:sym}
A compact $\Gamma\in\pk_f$ is said to be symmetric with respect to a Borel measure $\omega$, $\supp(\omega)\cap\Gamma=\varnothing$, if  $\Gamma$ is a smooth cut for $f$ (Definition \ref{df:elmin}) and
\begin{equation}
\label{eq:GreenPotSym}
\frac{\partial}{\partial\n^+}V_{\overline\C\setminus\Gamma}^\omega = \frac{\partial}{\partial\n^-}V_{\overline\C\setminus\Gamma}^\omega \quad \mbox{on} \quad \bigcup \gamma_j,
\end{equation}
where $\partial/\partial\n^\pm$ are the partial derivatives with respect to the one-sided normals on each side of $\gamma_j$ and $\displaystyle V_{\overline\C\setminus\Gamma}^\omega$ is the Green potential of $\omega$ relative to $\overline\C\setminus\Gamma$.
\end{df}

Definition \ref{df:sym} is given in the spirit of \cite{St85b} and thus appears to be different from the \emph{S-property} defined in \cite{GRakh87}. Namely, a compact $\Gamma\subset\D$ having the structure of a smooth cut is said to possess the S-property in the field $\psi$, assumed to be harmonic in some neighborhood of $\Gamma$, if
\begin{equation}
\label{eq:sproperty}
\frac{\partial (V^{\omega_{\Gamma,\psi}}+\psi)}{\partial\n^+} = \frac{\partial (V^{\omega_{\Gamma,\psi}}+\psi)}{\partial\n^-}, \quad \mbox{q. e. on} \quad \bigcup \gamma_j,
\end{equation}
where $\omega_{\Gamma,\psi}$ is the weighted equilibrium distribution on $\Gamma$ in the field $\psi$ and the normal derivatives exist at every tame point of
$\supp(\omega_{\Gamma,\psi})$ (see Section \ref{ss:52}). It follows from \eqref{eq:whnustar} and \eqref{eq:toRemind1} that $\Gamma$ has the S-property in the field $-U^\nu$ if and only if it is symmetric with respect to $\nu^*$,
taking into account that $V^{\omega_{\Gamma,-U^\nu}}-U^\nu$ is constant on the arcs $\gamma_j$ which are regular (see Section~\ref{dubalai}) hence the normal derivatives exist at every point.
This reconciles Definition \ref{df:sym} with the one given in \cite{GRakh87} in the setting of our work.

The symmetry property \eqref{eq:GreenPotSym} entails that $V_{\overline\C\setminus\Gamma}^\omega$ has a very special structure.

\begin{prop}
\label{prop:minset}
Let $\Gamma = E_0\cup E_1\cup\bigcup\gamma_j$ and $V_{\overline\C\setminus\Gamma}^\omega$ be as in Definitions~\ref{df:elmin} and~\ref{df:sym}. Then the arcs $\gamma_j$ possess definite tangents at their endpoints. The tangents 
to the arcs ending at $e\in E_1$ (there are at least three by definition of a
smooth cut) are equiangular. 
Further, set
\begin{equation}
\label{eq:funh}
 H_{\omega,\Gamma} := \partial_z V_{\overline\C\setminus\Gamma}^\omega, \quad \partial_z := ( \partial_x-i\partial_y)/2.
\end{equation}
Then $H_{\omega,\Gamma}$ is holomorphic in $\overline\C\setminus(\Gamma\cup\supp(\omega))$ and has continuous boundary values from each side of every $\gamma_j$ that satisfy $H_{\omega,\Gamma}^+ = -H_{\omega,\Gamma}^-$ on each $\gamma_j$. Moreover, $H_{\omega,\Gamma}^2$ is a meromorphic function  in 
$\overline{\C}\setminus\mbox{supp}(\omega)$ that has a simple pole at each element of $E_0$ and a zero at each element $e$ of $E_1$ whose order is equal to the number of arcs $\gamma_j$ having $e$ as endpoint minus 2. 
\end{prop}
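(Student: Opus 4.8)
The plan is to work locally near the arc system and exploit the symmetry relation \eqref{eq:GreenPotSym} together with the analyticity of the arcs guaranteed by Definition~\ref{df:elmin}. First I would record that, since each $\gamma_j$ is an open analytic arc disjoint from $\supp(\omega)$, the Green potential $V_{\overline\C\setminus\Gamma}^\omega$ is harmonic in a punctured neighborhood of $\gamma_j$ (punctured along $\Gamma$), vanishes on $\gamma_j$ (it is the Green potential relative to $\overline\C\setminus\Gamma$ of a measure not charging $\gamma_j$), and by \eqref{eq:GreenPotSym} its one-sided normal derivatives match. The classical reflection argument across an analytic arc on which a harmonic function vanishes then shows $V_{\overline\C\setminus\Gamma}^\omega$ extends harmonically across the interior of each $\gamma_j$. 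Consequently $H_{\omega,\Gamma}=\partial_z V_{\overline\C\setminus\Gamma}^\omega$ is holomorphic in $\overline\C\setminus(\Gamma\cup\supp(\omega))$ and, because $V_{\overline\C\setminus\Gamma}^\omega$ is identically zero on $\gamma_j$ while the normal derivatives from the two sides are opposite in sign relative to a fixed orientation, one gets $H_{\omega,\Gamma}^+=-H_{\omega,\Gamma}^-$ along each $\gamma_j$ (the tangential derivative of a function vanishing on $\gamma_j$ is zero, so only the normal part survives, and it flips sign). This sign rule is exactly what makes $H_{\omega,\Gamma}^2$ continuous, hence single-valued, across the open arcs.

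Next I would analyze the behavior of $H_{\omega,\Gamma}^2$ at the special points $E_0\cup E_1$. Away from $\supp(\omega)$ the function $H_{\omega,\Gamma}^2$ is holomorphic on each $\gamma_j$ and, by the previous paragraph, continues across; it remains to understand the isolated points of $E_0\cup E_1$. At a point $e_0\in E_0$ exactly one arc ends, and $V_{\overline\C\setminus\Gamma}^\omega$ is, near $e_0$, a positive harmonic function vanishing on that single analytic arc terminating at $e_0$; the standard local normal form (uniformize so the arc becomes a segment with the point at the tip, using a square-root-type conformal map) shows $V_{\overline\C\setminus\Gamma}^\omega$ behaves like $c\,\re\sqrt{z-e_0}$ to leading order, whence $H_{\omega,\Gamma}^2$ has a simple pole at $e_0$. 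At a point $e\in E_1$ where $k\ge 3$ analytic arcs meet, $V_{\overline\C\setminus\Gamma}^\omega$ is a positive harmonic function vanishing on all $k$ arcs; locally it must look like $\re(z-e)^{k/2}$ after a conformal change of variable, forcing the $k$ arcs to be asymptotically equiangular (the zero set of $\re\zeta^{k/2}$ consists of $k$ equally spaced rays) and giving $H_{\omega,\Gamma}^2$ a zero of order $k-2$ at $e$. The equiangularity and the existence of definite tangents at the endpoints are read off from this same local normal form. For the behavior on $\supp(\omega)$ I would simply note that $H_{\omega,\Gamma}^2$ is holomorphic off $\Gamma\cup\supp(\omega)$, that the singularities on $\Gamma$ are as just described (pole at $E_0$, zero at $E_1$, removable on the open arcs), so $H_{\omega,\Gamma}^2$ is meromorphic in $\overline\C\setminus\supp(\omega)$.

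To make the local normal form argument rigorous I would use the following device. Near a point $p\in\gamma_j$ or $p\in E_0\cup E_1$, pick a conformal map $\phi$ of a neighborhood of $p$ that straightens the analytic arcs: an analytic arc through $p$ becomes a straight segment. At an interior point of $\gamma_j$ this is immediate; at $e\in E_1$ with $k$ analytic arcs meeting, the arcs need not straighten simultaneously, so instead I would argue directly with the harmonic function $u:=V_{\overline\C\setminus\Gamma}^\omega$: it is nonnegative, harmonic off $\Gamma$ near $e$, continuous and zero on $\Gamma$, and not identically zero; a local expansion of a harmonic function vanishing on a union of analytic arcs through a common point (combined with the reflection extension across each open arc) shows that the lowest-order homogeneous harmonic polynomial in its Taylor-type expansion at $e$ is, up to rotation, $\re(z-e)^{m}$ for some half-integer or integer $m$, and the nonnegativity and the matching of the arcs with the nodal lines of this polynomial pin down $m=k/2$ and the equiangular configuration. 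Then $H_{\omega,\Gamma}=\partial_z u$ is $\sim (m/2)\,\overline{(\cdot)}$-type only through the harmonic conjugate bookkeeping; more cleanly, $2\partial_z u$ equals the derivative of the holomorphic function whose real part is $u$, which near $e$ behaves like $(z-e)^{m-1}$, so $H_{\omega,\Gamma}^2\sim (z-e)^{2m-2}=(z-e)^{k-2}$.

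The main obstacle I anticipate is the precise justification of this local normal form at the branch-type endpoints --- i.e., showing that a nonnegative harmonic function vanishing on a system of $k\ge 3$ analytic arcs emanating from a common point, and satisfying the symmetry \eqref{eq:GreenPotSym} on each arc, must have the homogeneous leading term $\re(z-e)^{k/2}$ with equiangular nodal rays, rather than some more degenerate behavior. The symmetry \eqref{eq:GreenPotSym} is what rules out the degenerate possibilities: without it one could have unequal ``angular sectors'' with different exponents, but the equality of normal derivatives across each arc forces the harmonic-conjugate (the multivalued function whose real part is $u$) to have matching increments, which is precisely the statement that $H_{\omega,\Gamma}^2$ is single-valued and meromorphic there; once single-valuedness is known, the order of the zero is forced to be the integer $k-2$, and that integer constraint in turn forces $m=k/2$ and equiangularity. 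So the logical order is: (1) reflection/analytic continuation across open arcs; (2) the sign rule $H^+=-H^-$ and hence $H^2$ continuous across; (3) local analysis at $E_0$ giving simple poles; (4) single-valuedness of $H^2$ near $E_1$ via \eqref{eq:GreenPotSym}, then the integrality of the zero order, then equiangularity and definite tangents; (5) assembling global meromorphy of $H_{\omega,\Gamma}^2$ on $\overline\C\setminus\supp(\omega)$.
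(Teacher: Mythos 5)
Your proposal is correct and, for the first half, is essentially the paper's argument: the paper also derives $H_{\omega,\Gamma}^+=-H_{\omega,\Gamma}^-$ by splitting $\nabla V_{\overline\C\setminus\Gamma}^\omega$ into tangential and normal parts (the tangential derivative of the boundary values vanishes because $V_{\overline\C\setminus\Gamma}^\omega\equiv0$ on $\Gamma$, so $n^\pm H^\pm$ is real, and \eqref{eq:GreenPotSym} together with $n^+=-n^-$ flips the sign), whence $H_{\omega,\Gamma}^2$ continues holomorphically across the open arcs. Where you diverge is at the nodes: the paper disposes of the points of $E_0\cup E_1$ by observing that $H_{\omega,\Gamma}$ is single-valued on a two-sheeted cover, cannot have an essential singularity because its primitive has bounded real part $\pm V_{\overline\C\setminus\Gamma}^\omega$, and then simply cites the local theory of quadratic differentials \cite[Sec.~8.2]{Pommerenke2} for the order of the pole/zero, the definite tangents, and the equiangularity; you instead reconstruct that local theory by hand via the normal form $\re\,c(z-e)^{k/2}$. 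Your route is more self-contained but must still supply two ingredients you only gesture at: (i) the exclusion of an essential singularity of $H_{\omega,\Gamma}^2$ at $e$ (single-valuedness alone does not give a Laurent expansion of finite order; one needs the boundedness of $\re F=\pm V_{\overline\C\setminus\Gamma}^\omega$ for a local primitive $F$ of $2H_{\omega,\Gamma}$, exactly as the paper notes), and (ii) at points of $E_0$, the strict positivity of the leading coefficient, which requires the boundary Hopf lemma applied after opening the slit by a square root — positivity of $V_{\overline\C\setminus\Gamma}^\omega$ alone is the right hypothesis but the Hopf step is what rules out a higher-order zero and hence a removable singularity instead of a simple pole. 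Once (i) is in place your counting argument does close: $H_{\omega,\Gamma}^2=(z-e)^N h$ with $h(e)\neq0$ forces the nodal set of $V_{\overline\C\setminus\Gamma}^\omega$ near $e$ to consist of $N+2$ asymptotically equiangular arcs, and matching with the $k$ arcs of $\Gamma$ gives $N=k-2$. One phrasing to repair: $V_{\overline\C\setminus\Gamma}^\omega$ itself does \emph{not} extend harmonically across $\gamma_j$ (it is nonnegative on both sides and vanishes on the arc); what \eqref{eq:GreenPotSym} gives is that the harmonic continuation of the branch from one side coincides with $-V_{\overline\C\setminus\Gamma}^\omega$ on the other side, which is all that is needed for the smooth existence of $H^\pm_{\omega,\Gamma}$ and the jump relation.
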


The following theorem is the main result of this section and is a weighted generalization of \cite[Thm. 1 and 2]{St85} and \cite[Thm.~1]{St85b} for functions in $\alg(\D)$.

\begin{thm}
\label{thm:minset}
Let $f\in\alg(\D)$ and $\nu$ be a probability Borel measure supported in $\overline\D$. Then a minimal set for Problem $(f,\nu)$, say $\Gamma_\nu$, exists, is unique and contained in $\overline\D_r$, $r:=\max_{z\in E_f}|z|$. Moreover, $\Gamma\in\pk_f$ is minimal if and only if it is symmetric with respect to $\nu^*$.
\end{thm}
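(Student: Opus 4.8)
\medskip

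The plan is to treat existence, the characterization via the $S$-property (symmetry with respect to $\nu^*$), and uniqueness as three separate packages, in that order, relying on the structural results already announced (Proposition~\ref{prop:minset}) and on the auxiliary lemmas referenced in the introduction (Lemmas~\ref{lem:1pr}, \ref{lem:1sym}, \ref{lem:1e}).

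\medskip

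\emph{Step 1: A priori localization and existence.} First I would prove the localization $\Gamma_\nu\subset\overline\D_r$ with $r=\max_{z\in E_f}|z|$. This should follow from the geometric estimate of Lemma~\ref{lem:1pr}: given any admissible $K\in\pk_f$, one constructs a competitor $K'\in\pk_f$ contained in $\overline\D_r$ by "pushing in" the parts of $K$ that stick out past $\overline\D_r$ — roughly, replacing excursions of $K$ outside $\overline\D_r$ by arcs on $\partial\D_r$ or by radial projection — and checks that this operation does not increase $\nu$-capacity, because $-U^\nu$ is superharmonic/harmonic in the relevant region and the projection contracts logarithmic energy while not decreasing the weight integral. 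Hence the infimum of $\cp_\nu$ over $\pk_f$ equals its infimum over the subcollection of admissible sets inside $\overline\D_r$. Then existence is obtained by a compactness argument: take a minimizing sequence $K_m$ in $\pk_f\cap\{K\subset\overline\D_r\}$; by Blaschke-type selection for the Hausdorff metric, pass to a subsequence converging in Hausdorff distance to some compact $\Gamma\subset\overline\D_r$. One must check (i) that $\Gamma\in\pk_f$, i.e.\ $\overline\C\setminus\Gamma$ is connected and $f$ continues single-valuedly and meromorphically there — this uses the monodromy/argument-principle stability of the admissibility condition under Hausdorff limits, carefully handling the branch points, and is where one invokes the structure of $\alg(\D)$ — and (ii) lower semicontinuity of $\cp_\nu$, i.e.\ upper semicontinuity of $I_\nu[\cdot]$: here the weight term $-2\int U^\nu\,d\omega$ is continuous under weak$^*$ convergence of the equilibrium measures (since $U^\nu$ is continuous off $\supp\nu^*$, and the supports stay in $\overline\D_r$ away from $\T$), while the logarithmic energy $I[\omega]$ is weak$^*$ lower semicontinuous by the standard principle of descent. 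Combining, $\cp_\nu(\Gamma)\le\liminf\cp_\nu(K_m)=\inf$, so $\Gamma$ is a minimizer. Part (ii) of Definition~\ref{df:minset} (minimality among equal-capacity competitors) is handled by intersecting all such minimizers, or postponed until after uniqueness.

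\medskip

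\emph{Step 2: Minimal $\Rightarrow$ symmetric.} Given a minimizer $\Gamma_\nu$, I would localize the problem: near a smooth piece of $\Gamma_\nu$ away from $E_f$ and from $\supp\nu^*$, the external field $-U^\nu$ is harmonic, and a small deformation of that piece changes $\cp_\nu$ to first order by an expression built from the difference of one-sided normal derivatives of $V^{\omega_{\Gamma_\nu,-U^\nu}}-U^\nu$. This is precisely the content of Lemma~\ref{lem:1sym}, which asserts the \emph{local equivalence} of the weighted extremal problem with a minimal-condenser-capacity problem (locally the weight is the potential of some auxiliary mass, so near a free arc the two variational problems have the same Euler--Lagrange condition). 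Optimality of $\Gamma_\nu$ forces that first variation to vanish along every admissible deformation, which yields \eqref{eq:sproperty}, hence by the reconciliation remark after Definition~\ref{df:sym} (using \eqref{eq:whnustar} and \eqref{eq:toRemind1}) symmetry with respect to $\nu^*$. One must separately argue that the minimizer actually \emph{has} the structure of a smooth cut — finitely many analytic arcs meeting at finitely many points, with the endpoint/branch-point conditions of Definition~\ref{df:elmin}: this comes from the fact that, locally, $\Gamma_\nu$ is a trajectory of the quadratic differential $-H_{\omega,\Gamma_\nu}^2\,dz^2$ (Proposition~\ref{prop:minset}), whose coefficient is meromorphic with prescribed poles at $E_0\subseteq E_f$ and zeros at the multiple points $E_1$; analyticity of arcs and the equiangular property at $E_1$ are then classical facts about quadratic-differential trajectories. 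The jump of $f$ being nonzero across each arc (the last clause of Definition~\ref{df:elmin}) is enforced by minimality: an arc across which $f$ has no jump could be removed without losing admissibility, strictly decreasing $\cp_\nu$.

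\medskip

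\emph{Step 3: Symmetric $\Rightarrow$ minimal, and uniqueness.} Suppose $\Gamma$ is symmetric with respect to $\nu^*$; I want $\cp_\nu(\Gamma)\le\cp_\nu(K)$ for all $K\in\pk_f$, with equality forcing $\Gamma\subseteq K$. The tool is the Dirichlet-integral variational argument of Lemma~\ref{lem:1e}. Concretely, write $u$ for the (weighted) equilibrium potential $V^{\omega_{\Gamma,-U^\nu}}-U^\nu$, which by the $S$-property is constant $=c(-U^\nu;\Gamma)$ on $\Gamma$ and harmonic with matching normal derivatives across the arcs, so its Dirichlet energy in $\overline\C\setminus\Gamma$ is "minimal". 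For a competitor $K$, comparing the Dirichlet integral of $u$ over $\overline\C\setminus(\Gamma\cup K)$ against that of the corresponding potential for $K$, and using that the two admissible sets both separate $E_f$ from $\infty$ in the same homotopy class (so the harmonic-conjugate periods match), one gets $I_\nu[\Gamma]\le I_\nu[K]$; the $S$-property is exactly what makes the cross-term in the Dirichlet inequality have a sign. Equality forces the two potentials to coincide off $\Gamma\cup K$, whence $\Gamma\subseteq K$; applying this with $K=\Gamma'$ another symmetric set gives $\Gamma\subseteq\Gamma'$ and $\Gamma'\subseteq\Gamma$, i.e.\ uniqueness. Combining Steps 2 and 3 gives the stated equivalence "minimal $\iff$ symmetric with respect to $\nu^*$", and uniqueness of the symmetric set yields uniqueness of the minimal set, after which Definition~\ref{df:minset}(ii) holds automatically.

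\medskip

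\emph{Main obstacle.} I expect the crux to be Step 2 — showing that the minimizer is a \emph{smooth cut} in the precise sense of Definition~\ref{df:elmin} and deriving the pointwise $S$-property \eqref{eq:GreenPotSym} from optimality. The soft compactness argument in Step 1 only produces a closed set with connected complement; upgrading it to a finite union of analytic arcs with controlled singular set requires the quadratic-differential analysis of Proposition~\ref{prop:minset} together with a delicate local-deformation argument near the branch points (where the field $-U^\nu$ may interact with $E_f$) and near possible accumulation of arcs. The weighted nature of the problem means one cannot quote \cite{St85b} verbatim; one must verify that the weight $-U^\nu$, being harmonic in $\D$ and given by an explicit logarithmic potential, is benign enough for the local reduction in Lemma~\ref{lem:1sym} to go through uniformly. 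The remaining steps are then essentially adaptations of the unweighted arguments of Stahl.
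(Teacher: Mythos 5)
Your overall architecture (radial projection for localization, Hausdorff compactness for existence, local reduction to a condenser problem for the $S$-property, Dirichlet integrals for uniqueness) matches the paper's, but two of your steps contain genuine gaps.

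First, in Step 1 your semicontinuity argument points the wrong way. Minimizing $\cp_\nu$ means \emph{maximizing} $I_\nu[\cdot]$, so for the Hausdorff limit $\Gamma$ of a maximizing sequence $\{K_m\}$ you need $I_\nu[\Gamma]\geq\limsup_m I_\nu[K_m]$, i.e.\ \emph{upper} semicontinuity of $I_\nu[\cdot]$. The principle of descent applied to the extremal measures gives $I[\omega]\leq\liminf I[\omega_m]$ and hence only $I_\nu[\Gamma]\leq\liminf_m I_\nu[K_m]$, which is the trivial direction. The inequality you actually need cannot be obtained by transporting measures (the extremal measure of $\Gamma$ need not live near $K_m$); the paper instead rewrites $I_\nu[K]$ as a Green energy of the fixed measure $\widetilde\nu^*$ relative to the complement of $K$ (formula \eqref{eq:weightgreen}) and proves genuine two-sided continuity (Lemma~\ref{lem:1cont}) via a quantitative comparison of Green functions and the level-line estimate \eqref{eq:rakhper}. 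This requires first reducing $\pk_f$ to the class $\pd_f$ of sets with finitely many components each containing at least two branch points (Lemma~\ref{lem:1haus}, a Zorn's-lemma argument), so that $\diam(\gamma)$ and hence $\cp(\gamma)$ is bounded below for every component $\gamma$ of every competitor; your plan omits this reduction, and without it the continuity fails because components may degenerate in the Hausdorff limit.

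Second, in Step 2 you propose to obtain the smooth-cut structure of the minimizer from Proposition~\ref{prop:minset}, but that proposition takes as hypotheses precisely that $\Gamma$ is already a smooth cut satisfying \eqref{eq:GreenPotSym}; it describes consequences of symmetry and cannot bootstrap either the structure or the symmetry of a minimizer. The paper's mechanism in Lemma~\ref{lem:1sym} is different from a first-variation computation: each component $\gamma$ of $\Gamma_\nu$ is enclosed in a level curve $L$ of $V_{D_\nu}^{\widetilde\nu^*}$, the interior of $L$ is mapped conformally onto $\D$, and one shows via an explicit balayage/signed-measure comparison (using Lemmas~\ref{lem:nder0} and~\ref{lem:nder1}) that $\gamma$ must coincide with the pullback of Kuzmina's minimal-condenser continuum for the enclosed branch points; the analytic-arc structure, the endpoint behavior, and the symmetry \eqref{eq:GreenPotSym} are then all inherited from Theorem~K and \eqref{eq:mincondcap} through the conformal map. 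Your Step 3 is closest to the paper (it is essentially Lemma~\ref{lem:1e}), though note the paper only ever compares two \emph{symmetric} smooth cuts rather than a symmetric set against an arbitrary $K\in\pk_f$, since the Dirichlet-integral bookkeeping uses the smooth-cut decomposition of both competitors; the equivalence ``minimal $\iff$ symmetric'' is then closed by combining existence, Lemma~\ref{lem:1sym}, and the uniqueness of the symmetric set.
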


The proof of Theorem~\ref{thm:minset} is carried out in Section~\ref{ss:51} and the proof of Proposition~\ref{prop:minset} is presented in Section~\ref{ss:prop}.

\section{Multipoint Pad\'e Approximation}
\label{sec:pade}

In this section, we state a result that yields complete information on the $n$-th root behavior of rational interpolants to functions in $\alg(\D)$. It is essentially a consequence both of Theorem~\ref{thm:minset} and Theorem~4 in \cite{GRakh87} on the behavior of multipoint Pad\'e approximants to functions analytic off a symmetric contour, whose proof plays here an essential role.

Classically, diagonal multipoint Pad\'e approximants to $f$ are rational functions of type $(n,n)$ that interpolate $f$ at a prescribed system of $2n+1$ points. However, when the approximated function is holomorphic at infinity, as is the case $f\in\alg(\D)$, it is customary to place at least one interpolation point there. More precisely, let $\E=\{E_n\}$ be a triangular scheme of points in $\overline\C\setminus E_f$ and let $v_n$ be the monic polynomial with zeros at the finite points of $E_n$. In other words, $\E:=\{E_n\}_{n\in\N}$ is such that each $E_n$ consists of $2n$ not necessarily distinct nor finite points contained in $\overline\C\setminus E_f$. 

\begin{df}
\label{df:pade}
Given $f\in\alg(\D)$ and a triangular scheme $\E$, the $n$-th diagonal Pad\'e approximant to $f$ associated with $\E$ is the unique rational function $\Pi_n=p_n/q_n$ satisfying:
\begin{itemize}
\item $\deg p_n\leq n$, $\deg q_n\leq n$, and $q_n\not\equiv0$; \smallskip
\item $\left(q_n(z)f(z)-p_n(z)\right)/v_n(z)$ has analytic (multi-valued) extension to $\overline\C\setminus E_f$; \smallskip
\item $\left(q_n(z)f(z)-p_n(z)\right)/v_n(z)=O\left(1/z^{n+1}\right)$ as $z\to\infty$.
\end{itemize}
\end{df}

Multipoint Pad\'e approximants always exist since the conditions for $p_n$ and $q_n$ amount to solving a system of $2n+1$ homogeneous linear equations with $2n+2$ unknown coefficients, no solution of which can be such that $q_n\equiv0$ (we may thus assume that $q_n$ is monic); note that the required interpolation at infinity is entailed by the last condition and therefore $\Pi_n$ is, in fact, of type $(n-1,n)$.

We define {\it the support of} $\E$ as $\supp(\E):=\cap_{n\in\N}\overline{\cup_{k\geq n}E_k}$. Clearly, $\supp(\E)$ contains the support of any weak$^*$ limit point of the normalized counting measures of points in $E_n$ (see Section~\ref{sss:wsccic}). We say that a Borel measure $\omega$ is the \emph{asymptotic distribution for $\E$} if the normalized counting measures of points in $E_n$ converge to $\omega$ in the weak$^*$ sense.

\begin{thm}
\label{thm:mpa}
Let $f\in\alg(\D)$ and $\nu$ be a probability Borel measure supported in $\overline\D$. Further, let $\E$ be a triangular scheme of points, $\supp(\E)\subset\overline\Om$, with asymptotic distribution $\nu^*$. Then 
\begin{equation}
\label{eq:mpa}
\displaystyle |f-\Pi_n|^{1/2n}\cic\exp\left\{-V_{D_\nu}^{\nu^*}\right\} \quad \mbox{in} \quad D_\nu\setminus \supp(\nu^*), \quad D_\nu =\overline\C\setminus\Gamma_\nu,
\end{equation}
where $\Pi_n$ are the diagonal Pad\'e approximants to $f$ associated with $\E$ and $\Gamma_\nu$ is the minimal set for Problem $(f,\nu)$. It also holds that the normalized counting measures of poles of  $\Pi_n$ converge weak$^*$ to $\widehat\nu^*$, the balayage (Section~\ref{ss:balayage}) of $\nu^*$ onto $\Gamma_\nu$ relative to $D_\nu$. In particular, the poles of $\Pi_n$ tend to $\Gamma_\nu$ in full proportion.
\end{thm}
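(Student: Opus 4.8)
The plan is to derive Theorem~\ref{thm:mpa} from Theorem~\ref{thm:minset}, which supplies the symmetric contour, together with the study of multipoint Pad\'e approximants to functions analytic off a symmetric contour carried out in \cite{GRakh87}. First I would apply Theorem~\ref{thm:minset} to the given probability measure $\nu$: it provides a unique minimal set $\Gamma_\nu\in\pk_f$, which lies in $\overline\D_r$, is a smooth cut for $f$ (Definition~\ref{df:elmin}), and is symmetric with respect to $\nu^*$ (Definition~\ref{df:sym}); equivalently, $\Gamma_\nu$ possesses the $S$-property in the external field $-U^\nu$. Since $\Gamma_\nu$ is admissible, $f$ continues from infinity to a single-valued meromorphic function on $D_\nu=\overline\C\setminus\Gamma_\nu$ having at most finitely many poles there, say $a_1,\dots,a_m$; moreover, by the last clause of Definition~\ref{df:elmin}, the jump $\Delta f$ of $f$ across each analytic arc $\gamma_j$ of $\Gamma_\nu$ is not identically zero. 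Writing $f$ as a rational function carrying the poles $a_i$ plus the Cauchy transform of $\Delta f$ over $\Gamma_\nu$ (in the algebraic case $\Delta f$ is integrable up to the endpoints of the $\gamma_j$; in general one argues directly with $q_nf-p_n$ so as to avoid this point) places $f$ in the framework of \cite[Thm.~4]{GRakh87} relative to the symmetric contour $\Gamma_\nu$.

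Next I would translate the defining conditions of $\Pi_n=p_n/q_n$ (Definition~\ref{df:pade}) into analytic properties of $q_n$. Deforming the interpolation and order conditions onto $\Gamma_\nu$ shows that the denominator satisfies the non-Hermitian orthogonality relations
\[
\oint_{\Gamma_\nu} t^{j}\,q_n(t)\,\frac{\Delta f(t)}{v_n(t)}\,dt = 0, \qquad 0\le j\le n-1
\]
(up to a bounded correction stemming from the fixed poles $a_i$), and that $f-\Pi_n$ admits a Hermite-type contour-integral representation over $\Gamma_\nu$ in terms of $q_n$, $v_n$ and $\Delta f$. Since $\supp(\E)\subset\overline\Om$ is at positive distance from $\Gamma_\nu$ and the zero-counting measures of the $v_n$ converge weak$^*$ to $\nu^*$, the external field generated by the interpolation nodes coincides, up to an additive constant, with $-U^\nu$; hence the hypotheses of \cite[Thm.~4]{GRakh87} are met with $\Gamma=\Gamma_\nu$. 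Running (the proof of) that theorem --- whose core is the use of the $S$-property to identify the limiting distribution of the zeros of the non-Hermitian orthogonal polynomial $q_n$ with the solution of a weighted equilibrium problem --- then yields that the normalized counting measures of the zeros of $q_n$, i.e. of the poles of $\Pi_n$, converge weak$^*$ to the weighted equilibrium measure $\omega_{\Gamma_\nu,-U^\nu}$, and that $|f-\Pi_n|^{1/2n}\cic\exp\{V^{\omega_{\Gamma_\nu,-U^\nu}}-U^\nu-c(-U^\nu;\Gamma_\nu)\}$ in $D_\nu\setminus\supp(\nu^*)$.

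It then remains to recast these conclusions in the stated form. Using the appendix, the balayage $\widehat{\nu^*}$ of $\nu^*$ onto $\Gamma_\nu$ relative to $D_\nu$ is supported on $\Gamma_\nu$ and satisfies $V^{\widehat{\nu^*}}=U^\nu+\const$ on $\Gamma_\nu$ and $V^{\widehat{\nu^*}}\ge U^\nu+\const$ quasi-everywhere (here one also uses that $U^\nu$ and $V^{\nu^*}$ differ by a constant, and the term $c(\nu^*;D_\nu)$); these are precisely the Euler--Lagrange conditions characterizing $\omega_{\Gamma_\nu,-U^\nu}$, so $\widehat{\nu^*}=\omega_{\Gamma_\nu,-U^\nu}$, which is the assertion on the poles. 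Substituting this identity together with the relation $V_{D_\nu}^{\nu^*}=U^\nu-V^{\widehat{\nu^*}}+\const$ (the Green potential of $\nu^*$ in $D_\nu$ written through $U^\nu$ and the balayage) turns the exponent above into $-V_{D_\nu}^{\nu^*}$, giving \eqref{eq:mpa}. Finally, since $\widehat{\nu^*}$ is carried by $\Gamma_\nu$ while only the $m$ poles of $\Pi_n$ converging to $a_1,\dots,a_m$ stay away from $\Gamma_\nu$, the poles tend to $\Gamma_\nu$ in full proportion.

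The substantive new ingredient is Theorem~\ref{thm:minset}, which is available. Granting it, the main obstacle will be to make sure the machinery of \cite{GRakh87} goes through for \emph{every} $f\in\alg(\D)$. Two points require care: the admissibility of the weight $\Delta f/v_n$ on $\Gamma_\nu$ --- in particular the behavior of $\Delta f$ near the branch points of $E_0$ and near the junction points of $E_1$, where at least three arcs meet --- and the presence of the fixed poles $a_1,\dots,a_m$, which create an $n$-independent number of poles of $\Pi_n$ off $\Gamma_\nu$ and must be quarantined before the equilibrium argument is invoked. Both are handled along the lines of \cite{GRakh87,St85b}, and checking them is where the real effort lies; by contrast, the potential-theoretic identifications in the previous paragraph are then routine consequences of balayage theory and Theorem~\ref{thm:minset}.
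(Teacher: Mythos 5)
Your proposal is correct and follows essentially the same route as the paper: invoke Theorem~\ref{thm:minset} to obtain the symmetric (S-property) contour $\Gamma_\nu$, neutralize the finitely many poles of $f$ in $\D$ by a fixed polynomial factor so that the Pad\'e conditions become non-Hermitian orthogonality relations on $\Gamma_\nu$ with the degree of the test polynomials reduced by a constant, run the Gonchar--Rakhmanov machinery (whose proof tolerates that reduction) to get the zero distribution and the $n$-th root behavior of the error via the Hermite formula, and finally identify $\omega_{\Gamma_\nu,-U^\nu}$ with $\widehat{\nu^*}$ and the exponent with $-V_{D_\nu}^{\nu^*}$ through balayage. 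The two technical points you flag at the end (the contour-integral formulation avoiding integrability of the jump, and the bookkeeping for the fixed poles) are exactly the ones the paper addresses.
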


\section{Proofs}
\label{sec:proofs}

\subsection{Proof of Theorem \ref{thm:minset}}
\label{ss:51}

In this section we prove Theorem \ref{thm:minset} in several steps that are organized as separate lemmas.

Denote by $\pd_f$ the subset of $\pk_f$ comprised of those admissible sets that are unions of a finite number of disjoint continua each of which contains at least two point of $E_f$. In particular, each member of $\pd_f$ is a regular set \cite[Thm. 4.2.1]{Ransford} and $\cp(\Gamma_1\setminus(\Gamma_1\cap\Gamma_2))>0$ when $\Gamma_1\neq\Gamma_2$, $\Gamma_1,\Gamma_2\in\pd_f$ (if $\Gamma_1\neq\Gamma_2$, there exists a continuum $\gamma\subset\Gamma_1\setminus(\Gamma_1\cap\Gamma_2)$; as any continuum has positive capacity \cite[Thm. 5.3.2]{Ransford}, the claim follows). Considering $\pd_f$ instead of $\pk_f$ makes the forthcoming analysis simpler but does not alter the original problem as the following lemma shows.

\begin{lem}
\label{lem:1haus}
It holds that $\displaystyle \inf_{\Gamma\in\pd_f}\cp_\nu(\Gamma) = \inf_{K\in\pk_f}\cp_\nu(K)$.
\end{lem}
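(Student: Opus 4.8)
The plan is to show both inequalities. The inequality $\inf_{\Gamma\in\pd_f}\cp_\nu(\Gamma)\geq\inf_{K\in\pk_f}\cp_\nu(K)$ is immediate since $\pd_f\subset\pk_f$. The content is in the reverse inequality: given any $K\in\pk_f$ and any $\epsilon>0$, I must produce $\Gamma\in\pd_f$ with $\cp_\nu(\Gamma)\leq\cp_\nu(K)+\epsilon$. Equivalently, in terms of the $\nu$-energy, it suffices to find $\Gamma\in\pd_f$ with $I_\nu[\Gamma]\geq I_\nu[K]-\epsilon'$, and for this it is enough to show that every $K\in\pk_f$ can be approximated from within by members of $\pd_f$ in a way that does not decrease the $\nu$-energy too much.

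The key construction: given $K\in\pk_f$, first note that since $\overline\C\setminus K$ is connected and $f$ extends single-valuedly and meromorphically there, each branch point of $f$ lying ``in $K$'' (more precisely, each element of $E_f$ that $f$ actually branches over on the relevant sheet) must be separated appropriately by $K$; in particular $K$ must contain continua that join these branch points in a suitable pattern. The first step is to replace $K$ by a set $K'\in\pk_f$ that is a finite union of continua, each containing at least two points of $E_f$: one takes the connected components of $K$, discards (or rather, notes one can enlarge slightly to absorb) components meeting at most one point of $E_f$ — such components can be deleted without destroying admissibility because a single isolated puncture or a continuum through one branch point does not by itself create multivaluedness that wasn't already handled — and what remains is a finite union of continua each through $\geq 2$ points of $E_f$, hence a member of $\pd_f$ after taking closures. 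Actually the cleanest route is: enlarge $K$ to a finite union $\Gamma$ of arcs connecting the points of $E_f$ in the homotopy pattern dictated by the monodromy of $f$, chosen so close to $K$ that the symmetric difference has small capacity; then $\Gamma\in\pd_f$ and $I_\nu[\Gamma]\geq I_\nu[K]-\epsilon'$ by continuity of the $\nu$-energy under such perturbations (using that $U^\nu$ is bounded and continuous on $\overline\D$, so the weighted energy differs from the logarithmic energy by a uniformly controlled amount, and the logarithmic energy is monotone and continuous under Hausdorff-type approximation of regular sets). Here one invokes the appendix results on continuity of equilibrium energy and on the boundedness of $U^\nu$.

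The main obstacle I anticipate is the \emph{topological/monodromy bookkeeping}: one must argue that the approximating set $\Gamma$ can genuinely be taken inside $\pd_f$ — i.e.\ that it still makes $f$ single-valued with connected complement — while lying in a prescribed small neighborhood of $K$. This requires knowing that the branch points that $K$ ``uses'' can be joined by arcs within an arbitrarily small neighborhood of $K$ respecting the cut structure forced by $f$'s analytic continuation; the subtlety flagged in the paper (that $E_0$ may be a proper subset of $E_f$, because some branch points sit on inaccessible sheets) means one cannot naively connect \emph{all} of $E_f$. I would handle this by working with the connected components $K_1,\dots,K_m$ of $K$: those $K_i$ meeting $\geq 2$ points of $E_f$ already contain continua joining those points and can be thickened/polygonally approximated inside a small neighborhood; those $K_i$ meeting $\leq 1$ point of $E_f$ can simply be discarded, since removing them cannot introduce new periods of $f$ (any loop in the larger complement that previously bounded in $\overline\C\setminus K$ still does, as $f$ had no branching detected by that component) — one must check connectedness of the complement is preserved, which holds because deleting a component of a compact set keeps the complement connected. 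Once this is in place, monotonicity $I_\nu[\Gamma]\geq I_\nu[K]$ for $\Gamma\subseteq K$ (weighted energy decreases as the set grows) combined with a small polygonal/analytic enlargement back to $\pd_f$ controlled in capacity closes the argument; letting $\epsilon\to 0$ yields the claim.
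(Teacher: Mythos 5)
Your overall strategy---extract from $K$ a subset lying in $\pd_f$ and invoke monotonicity of $\cp_\nu$---is the right one, and it is what the paper does. But the central step, showing that after discarding the ``superfluous'' parts of $K$ one is left with an \emph{admissible} set, is exactly where your argument has a gap. First, a general $K\in\pk_f$ may have infinitely many (even uncountably many) connected components, so you cannot list them as $K_1,\dots,K_m$ and discard the bad ones one by one; some device is needed to reduce to finitely many components in the first place. Second, and more seriously, your justification for discarding a component---``any loop in the larger complement that previously bounded in $\overline\C\setminus K$ still does''---is circular: the loops that threaten single-valuedness after removal are precisely those that did \emph{not} lie in $\overline\C\setminus K$, and for these you must actually produce a meromorphic continuation, not merely assert that no new period appears. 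The paper resolves both difficulties at once: it takes (via Zorn's lemma) a maximal domain $O\supset\overline\C\setminus K$ of meromorphic continuation, sets $F=\overline\C\setminus O$, and shows by an explicit separation construction that if $F$ had a component containing no point of $E_f\cap F$, one could build a simply connected open set $U$ with $\partial U\cap F=\varnothing$ containing that component and no point of $E_f\cap F$; the monodromy theorem then extends $f$ to $O\cup U$, contradicting maximality. A short Jordan-curve argument using property (iii) of the class $\alg(G)$ then rules out components meeting $E_f$ in a single point. This maximality-plus-monodromy mechanism is the missing ingredient in your sketch.

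Your fallback route---enlarging $K$ to a nearby finite union of arcs and appealing to continuity of $I_\nu$ under Hausdorff perturbation---does not close the gap either: that continuity (the paper's Lemma~\ref{lem:1cont}) is proved only for sets in $\pd_f$, whose components are continua containing at least two branch points (this is what supplies the uniform lower bound on diameters driving the estimate), and no such estimate is available for an arbitrary $K\in\pk_f$. Once the subset $F\in\pd_f$ with $F\subseteq K$ is produced, no $\epsilon$-argument is needed at all: $\cp_\nu(F)\leq\cp_\nu(K)$ holds exactly by monotonicity of the weighted capacity, and the lemma follows.
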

\begin{proof}
Pick $K\in\pk_f$ and let  $\mathcal{O}$ be the collection of all domains containing $\overline{\C}\setminus K$ to which $f$ extends meromorphically. The set  $\mathcal{O}$ is nonempty as it contains $\overline{\C}\setminus K$, it is partially ordered by inclusion, and any totally ordered subset $\{O_\alpha\}$ has an upper bound, {\it e.g.} $\cup_\alpha O_\alpha$. Therefore, by Zorn's lemma \cite[App. 2, Cor.2.5]{Lang}, $\mathcal{O}$ has a maximal element,
say  $O$. 

Put $F=\overline{\C}\setminus O$. With a slight abuse of notation, we still denote by $f$ the meromorphic continuation of the latter to $\overline{\C}\setminus F$. Note that a point in $E_f$ is either ``inactive'' (i.e., is not a 
branch point for that branch of $f$ that we consider over $\overline{\C}\setminus F$) or belongs to $F$.

If $F$ is not connected, there are two bounded disjoint open sets 
$V_1$, $V_2$ such that $(V_1\cup V_2)\cap F=F$ and,
for $j=1,2$, 
$\partial V_j\cap F=\varnothing$, $V_j\cap F\neq\varnothing$. 
If $V_j$ contains only one connected component of $F$,
we do not refine it further. Otherwise,
there are two disjoint open sets $V_{j,1}, V_{j,2}
\subset V_{j}$ such that $(V_{j,1}\cup V_{j,2})\cap F=V_{j}\cap F$ and,
for $k=1,2$, 
$\partial V_{j,k}\cap F=\varnothing$, $V_{j,k}\cap F\neq\varnothing$.
Iterating this process, we obtain successive generations  
of bounded finite disjoint open covers of $F$, each element of which 
contains at least one connected component of $F$ and has boundary that does 
not meet $F$. The process stops if
$F$ has finitely many components, and then 
the resulting open sets separate them.
Otherwise the process can continue indefinitely and, if 
$C_1,\ldots,C_N$ are the finitely many connected components of $F$ that
meet $E_f$, at least one open set of the $N+1$-st generation  contains no 
$C_j$.  In any case, if $F$ has more than $N$ connected components,
there is a bounded open set $V$, containing at least one connected component
of $F$ and no point of $E_f\cap F$, such that  $\partial V\cap F=\varnothing$.

Let $A$ be the unbounded connected component of $\overline{\C}\setminus V$ and $A_1,\ldots,A_L$ those bounded components of $\overline{\C}\setminus V$, if any, that contain some $C_j$ (if $L=0$ this is the empty collection). 
Since $O=\overline{\C}\setminus F$ is connected, each $\partial A_\ell$ can be connected to $\partial A$ by a closed arc $\gamma_{\ell}\subset O$. Then $W:=V\setminus \cup_{\ell}\gamma_{\ell}$ is open with $\partial W\cap F=\varnothing$, it contains at least one connected component of $F$, and no bounded component of its complement meets $E_f\cap F$. Let $X$ be the unbounded connected component of $\overline{\C}\setminus W$ and put $U:=\overline{\C}\setminus X$. The set $U$ is open, simply connected, and $\partial U\subset \partial W$ is compact and does not meet $F$. Moreover, since it is equal to the union of $W$ and all the bounded components of $\overline{\C}\setminus W$, $U$ does not meet $E_f\cap F$.

Now, $f$ is defined and meromorphic in a neighborhood of $\partial U\subset O$, and meromorphically  continuable along any path in $U$ since the latter contains no point of $E_f\cap F$. Since $U$ is simply connected, $f$ extends meromorphically to $O\cup U$ by the monodromy theorem. However the latter set is a domain which strictly contains
$O$ since $U$ contains $W$ and thus at least one connected component of $F$. This contradicts the maximality of $O$ and shows that $F$ consists precisely of $N$ connected components, namely $C_1,\ldots,C_N$. Moreover, if $\Gamma_j$ is a Jordan curve encompassing $C_j$ and no other $C_\ell$, then by what precedes $f$ must be single-valued  along $\Gamma_j$ which is impossible if $C_j\cap E_f$ is a single point by property (iii) in the definition of the 
class $\alg(G)$. Therefore $F\in \pd_f$ and since $F\subset K$ it holds that  $\cp_\nu (F)\leq\cp_\nu(K)$. This achieves the proof.
\end{proof}

For any $\Gamma\in\pd_f$ and $\epsilon>0$, set $(\Gamma)_\epsilon:=\{z\in\D:\dist(z,\Gamma)<\epsilon\}$.  We endow $\pd_f$ with the Hausdorff metric, i.e.,
\[
d_H(\Gamma_1,\Gamma_2) := \inf\{\epsilon:\Gamma_1\subset(\Gamma_2)_\epsilon,\Gamma_2\subset(\Gamma_1)_\epsilon\}.
\]
By standard properties of the Hausdorff distance \cite[Sec. 3.16]{Dieudonne}, $\clos_{d_H}(\pd_f)$, the closure of $\pd_f$ in the $d_H$-metric, is a compact metric space. Observe that taking $d_H$-limit cannot increase the number of connected components since any two components of the limit set have disjoint $\epsilon$-neighborhoods. That is, the $d_H$-limit of a sequence of compact sets having less than $N$ connected components has in turn less than $N$ connected components. Moreover, each component of the $d_H$-limit of a sequence of compact sets $E_n$ is the $d_H$-limit of a sequence of unions of components from $E_n$.  Thus, each element of $\clos_{d_H}(\pd_f)$ still consists of a finite number of continua each containing at least two points from $E_f$ but possibly with multiply connected complement. However, the polynomial convex hull of such a set, that is, the union of the set with the bounded components of its complement, again belongs to $\pd_f$ unless the set touches $\T$.

\begin{lem}
\label{lem:1cont}
Let $\pd\subset\pd_f$ be such that each element of $\clos_{d_H}(\pd)$ is contained in $\D$. Then the functional $I_\nu[\cdot]$ is finite and continuous on $\clos_{d_H}(\pd)$.
\end{lem}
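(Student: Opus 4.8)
The plan is to reduce the statement to the classical principle of descent together with the upper semicontinuity of equilibrium energy under Hausdorff convergence, adapted to the weighted setting. First I would record that the external field $-U^\nu$ is harmonic, hence continuous and bounded, on a neighborhood of the compact set $\cup_{\Gamma\in\clos_{d_H}(\pd)}\Gamma$, which is itself compactly contained in $\D$ by hypothesis (a Hausdorff-compactness argument: the union of all limit sets is closed and avoids $\T$). Consequently the weighted energy functional $I_\nu[\omega]=I[\omega]-2\int U^\nu\,d\omega$ differs from the ordinary logarithmic energy only by the continuous linear term $-2\int U^\nu\,d\omega$, and every set under consideration has positive capacity (each contains a continuum), so $I_\nu[\Gamma]=\min_\omega(I[\omega]-2\int U^\nu d\omega)$ is finite; this gives finiteness.

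For continuity I would split into the two semicontinuity inequalities. Let $\Gamma_k\to\Gamma$ in $d_H$ inside $\clos_{d_H}(\pd)$. For \emph{lower semicontinuity of $\cp_\nu$}, equivalently \emph{upper semicontinuity of $I_\nu$}, I would use that any measure supported on $\Gamma$ can be approximated: push the weighted equilibrium measure $\omega_{\Gamma,-U^\nu}$ slightly, or more simply invoke that $\Gamma\subset(\Gamma_k)_{\epsilon_k}$ with $\epsilon_k\to0$, sweep $\omega_{\Gamma,-U^\nu}$ onto $\Gamma_k$ (balayage, or composition with a retraction onto $\Gamma_k$ defined on a neighborhood), and check that both $I[\cdot]$ and $\int U^\nu d(\cdot)$ change by $o(1)$; this yields $\limsup_k I_\nu[\Gamma_k]\le I_\nu[\Gamma]$. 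For \emph{upper semicontinuity of $\cp_\nu$} (lower semicontinuity of $I_\nu$), I would take the weighted equilibrium measures $\omega_k:=\omega_{\Gamma_k,-U^\nu}$, pass to a weak$^*$ convergent subsequence $\omega_k\cws\omega$ with $\supp(\omega)\subset\Gamma$ (since $\Gamma_k\subset(\Gamma)_{\epsilon_k}$ and supports shrink onto $\Gamma$), apply the principle of descent \cite[Sec.~I.3]{...} — here I would instead cite the potential-theory appendix, Section~\ref{sec:pt} — to get $I[\omega]\le\liminf_k I[\omega_k]$, while $\int U^\nu d\omega_k\to\int U^\nu d\omega$ by continuity of $U^\nu$ on the common neighborhood; hence $I_\nu[\Gamma]\le I_\nu[\omega]\le\liminf_k I_\nu[\Gamma_k]$. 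Combining the two inequalities gives $I_\nu[\Gamma_k]\to I_\nu[\Gamma]$, i.e. continuity of $I_\nu[\cdot]$, and likewise of $\cp_\nu(\cdot)=\exp\{-I_\nu[\cdot]\}$.

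The main obstacle I expect is the upper-semicontinuity-of-capacity direction, namely the uniform control needed when sweeping $\omega_{\Gamma,-U^\nu}$ onto the approximating sets $\Gamma_k$: one must ensure the mass does not escape to low-capacity pieces and that the logarithmic energy does not jump up. This is handled by exploiting that every $\Gamma\in\clos_{d_H}(\pd)$ consists of finitely many continua (so each component is non-degenerate and regular for the Dirichlet problem, by \cite[Thm.~4.2.1]{Ransford}), which prevents thin sets and gives the needed continuity of the retraction/balayage step; the boundedness of $U^\nu$ on the fixed compact neighborhood takes care of the weighted term uniformly. Once these uniform estimates are in place the argument is the standard one for continuity of the Robin constant under Hausdorff limits of uniformly fat compacta.
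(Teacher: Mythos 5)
Your overall architecture --- finiteness from non-polarity plus boundedness of $U^\nu$ on a compact neighborhood, then continuity split into two semicontinuity inequalities --- is reasonable, and the direction $I_\nu[\Gamma]\le\liminf_k I_\nu[\Gamma_k]$ via weak$^*$ compactness of the equilibrium measures and the principle of descent is correct (it is the ``soft'' direction, essentially outer regularity of capacity). The genuine gap is in the opposite inequality $\limsup_k I_\nu[\Gamma_k]\le I_\nu[\Gamma]$, i.e.\ lower semicontinuity of $\cp_\nu$, which you yourself flag as the main obstacle: ``sweep $\omega_{\Gamma,-U^\nu}$ onto $\Gamma_k$ and check that the energies change by $o(1)$'' is the whole difficulty restated, and neither mechanism you propose works as stated. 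A nearest-point retraction onto $\Gamma_k$ moves points by at most $d_H(\Gamma,\Gamma_k)$, but the pushforward of a measure under such a map can increase the logarithmic energy by an arbitrarily large amount, since $\log(1/|z-u|)$ is unbounded and the map may collapse pairs with $|z-u|\lesssim\epsilon_k$; ``finitely many continua'' does not by itself exclude this. Balayage onto $\Gamma_k$ relative to $\overline\C\setminus\Gamma_k$ does not increase $I[\cdot]$, but it need not preserve the field term: $U^\nu$ is neither harmonic nor bounded in that unbounded domain ($U^\nu(z)\sim-\log|z|$ at infinity), so $\int U^\nu d\widehat\omega\neq\int U^\nu d\omega$ in general, and estimating the discrepancy requires control of potentials on $\Gamma_k$ near, but off, $\Gamma$ --- precisely the quantitative boundary estimate that is missing.

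The paper closes this gap by a different reduction. By \eqref{eq:weightgreen}, $I_\nu[\Gamma]=I_{D}[\widetilde\nu^*]-I[\widetilde\nu^*]+2V^{\widetilde\nu^*}(0)$, where $D=\overline\C\setminus\Gamma$ and $\widetilde\nu^*$ is the balayage of $\nu^*$ onto a circle $\T_{1-\epsilon_0}$ separated from all sets under consideration. Hence $I_\nu[\Gamma]-I_\nu[\Gamma_0]$ is the double integral of $g_D(z,u)-g_{D_0}(z,u)$ against $\widetilde\nu^*\times\widetilde\nu^*$; this difference is harmonic off $\overline{(\Gamma)_\epsilon}\cap\overline{(\Gamma_0)_\epsilon}$ with $\epsilon=d_H(\Gamma,\Gamma_0)$, so by the maximum principle one only needs to bound each Green function on the boundary of the $\epsilon$-neighborhood of its own pole set. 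That bound is the level-line estimate \eqref{eq:rakhper}: for a continuum $\gamma$ one has $g_{G_\gamma}(\cdot,\infty)\le\sqrt{2\epsilon/\cp(\gamma)}$ at distance $\epsilon$ from $\gamma$, and the required uniformity comes solely from $\cp(\gamma)\ge\diam(\gamma)/4$ with $\diam(\gamma)$ bounded below because each component contains at least two branch points. This yields the local H\"older estimate $|I_\nu[\Gamma]-I_\nu[\Gamma_0]|\le C\,\sqrt{d_H(\Gamma,\Gamma_0)}$, which is the quantitative substitute for your unproved sweeping step; without \eqref{eq:rakhper} or an equivalent, your argument does not go through.
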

\begin{proof}
Let $\Gamma_0\in\clos_{d_H}(\pd)$ be fixed. Set $\epsilon_0:=\dist(\Gamma_0,\T)/4>0$ and define
\begin{equation}
\label{eq:nbhd}
\mathcal{N}_{\epsilon_0}(\Gamma_0) := \left\{\Gamma\in\clos_{d_H}(\pd):~d_H(\Gamma_0,\Gamma)<\epsilon_0\right\}.
\end{equation}
Then it holds that $\dist((\Gamma)_\epsilon,\T) \geq 2\epsilon_0$ for any $\Gamma\in\mathcal{N}_{\epsilon_0}(\Gamma_0)$ and $\epsilon\leq\epsilon_0$. Thus, the closure of each such $(\Gamma)_\epsilon$ is at least $\epsilon_0$ away from $\T_{1-\epsilon_0}$.

Let $\Gamma\in\mathcal{N}_{\epsilon_0}(\Gamma_0)$ and set $\epsilon:=d_H(\Gamma_0,\Gamma)$. Denote by $D_0$ and $D$ the unbounded components of the complements of $\Gamma_0$ and $\Gamma$, respectively. It follows from \eqref{eq:weightgreen} that $I_\nu[\Gamma_0]$ is finite and that
\[
I_\nu[\Gamma] - I_\nu[\Gamma_0] = \iint \left(g_D(z,u)-g_{D_0}(z,u)\right)d\widetilde\nu^*(u)d\widetilde\nu^*(z),
\]
where $\widetilde\nu^*$ is the balayage of $\nu^*$ onto $\T_{1-\epsilon_0}$. Since $\Gamma\subset\overline{(\Gamma_0)_\epsilon}$ and $\Gamma_0\subset\overline{(\Gamma)_\epsilon}$, $g_D(\cdot,u)-g_{D_0}(\cdot,u)$ is a harmonic function in $G:=\overline\C\setminus(\overline{(\Gamma)_\epsilon}\cap\overline{(\Gamma_0)_\epsilon})$ for each $u\in G$ by the first claim in Section~\ref{ss:gp} (recall that we agreed to continue $g_{D_0}(\cdot,u)$ and $g_D(\cdot,u)$ by zero outside of the closures of $D_0$ and $D$, respectively). Thus, since Green functions are non-negative, we get from the maximum principle for harmonic functions and the fact that $\widetilde\nu^*$ is a unit measure that
\begin{eqnarray}
\left|I_\nu[\Gamma] - I_\nu[\Gamma_0]\right| &\leq& \max_{u\in\T_{1-\epsilon_0}} \max_{z\in\partial G}|g_D(z,u)-g_{D_0}(z,u)| \nonumber \\
\label{eq:gfbound}
{} &<& \max_{u\in\T_{1-\epsilon_0}}\left(\max_{z\in\partial(\Gamma)_\epsilon}g_D(z,u) + \max_{z\in\partial(\Gamma_0)_\epsilon}g_{D_0}(z,u)\right).
\end{eqnarray}

Let $\gamma$ be any connected component of $\Gamma$ and $G_\gamma$ be the unbounded component of its complement. Observe that $(\Gamma)_\epsilon=\cup_\gamma(\gamma)_\epsilon$, where the union is taken over the (finitely many) components of $\Gamma$. Since $D\subset G_\gamma$, we get that
\begin{equation}
\label{eq:twogreen}
g_D(z,u) \leq g_{G_\gamma}(z,u)
\end{equation}
for any $u\in D$ and $z\in G_\gamma\setminus{u}$ by the maximum principle. 

Set $\delta:=\sqrt{2\epsilon/\cp(\gamma)}$ and $L$ to be the $\log(1+\delta)$-level line of $g_{G_\gamma}(\cdot,\infty)$. As $G_\gamma$ is simply connected, $L$ is a smooth Jordan curve.\footnote{By conformal invariance of Green functions it is enough to check it for $G_\gamma=\Om$ in which case it is obvious.} Since $\gamma$ is a continuum, it is well-known that $\cp(\gamma)\geq\diam(\gamma)/4$ \cite[Thm. 5.3.2]{Ransford}. Recall also that  $\gamma$ contains at least two points from $E_f$. Thus, $\diam(\gamma)$ is bounded from below by the minimal distance between the algebraic singularities of $f$. Hence, we can assume without loss of generality that $\delta\leq1$.  We claim that $\dist(\gamma,L)\geq\epsilon$ and postpone the proof of this claim until the end of this lemma. The claim immediately implies that $(\gamma)_\epsilon$ is contained in the bounded component of the complement of $L$ and that
\begin{equation}
\label{eq:maxgreen}
\max_{z\in\partial(\gamma)_\epsilon} g_{G_\gamma}(z,\infty)\leq\log(1+\delta) \leq \delta.
\end{equation}

It follows from the conformal invariance of the Green function \cite[Thm. 4.4.2]{Ransford} and can be readily verified using the characteristic properties that $g_{G_\gamma}(z,u)=g_{G_\gamma^u}(1/(z-u),\infty)$, where $G_\gamma^u$ is the image of $G_\gamma$ under the map $1/(\cdot-u)$. It is also simple to compute that
\begin{equation}
\label{eq:neweps}
\dist(\gamma^u,\partial(\gamma)_\epsilon^u) \leq \frac{\epsilon}{\dist(u,\gamma)\dist(u,\partial(\gamma)_\epsilon)} \leq\frac{\epsilon}{\epsilon_0^2}, \quad u\in\T_{1-\epsilon_0},
\end{equation}
by the remark after \eqref{eq:nbhd}, where $\gamma^u$ and $(\gamma)_\epsilon^u$ have obvious meaning. So, combining \eqref{eq:neweps} with \eqref{eq:maxgreen} applied to $\gamma^u$, we deduce that
\begin{equation}
\label{eq:maxgreenu}
\max_{z\in\partial(\gamma)_\epsilon}g_{G_\gamma}(z,u)  = \max_{z\in\partial(\gamma)_\epsilon^u}g_{G_\gamma^u}(z,\infty) \leq \max_{z\in\partial(\gamma^u)_{\epsilon/\epsilon_0^2}}g_{G_\gamma^u}(z,\infty) \leq \delta_u, \quad u\in\T_{1-\epsilon_0}
,\end{equation}
where we put $\delta_u:=\sqrt{2\epsilon/\epsilon_0^2\cp(\gamma^u)}$. 

As we already mentioned, $\cp(\gamma)\geq\diam(\gamma)/4$. Hence, it holds that
\begin{equation}
\label{eq:belowcap}
\min_{u\in\T_{1-\epsilon_0}}\cp(\gamma^u) \geq \frac14\min_{u\in\T_{1-\epsilon_0}}\max_{z,w\in\gamma}\left|\frac{1}{z-u}-\frac{1}{w-u}\right| \geq \frac{\diam(\gamma)}{16}.
\end{equation}
Gathering together \eqref{eq:twogreen}, \eqref{eq:maxgreenu}, and \eqref{eq:belowcap}, we derive that
\[
\max_{u\in\T_{1-\epsilon_0}}\max_{z\in\partial(\Gamma)_\epsilon}g_D(z,u) \leq \max_\gamma \frac{4}{\epsilon_0}\sqrt{\frac{2\epsilon}{\diam(\gamma)}},
\]
where $\gamma$ ranges over all components of $\Gamma$. Recall that each component of $\Gamma$ contains at least two points from $E_f$. Thus, $1/\diam(\gamma)$ is bounded above by a constant that depends only on $f$.

Arguing in a similar fashion for $\Gamma_0$, we obtain from \eqref{eq:gfbound} that
\[
|I_\nu[\Gamma] - I_\nu[\Gamma_0]| \leq \frac{\const}{\epsilon_0} \sqrt{d_H(\Gamma,\Gamma_0)} \quad \mbox{for any} \quad \Gamma\in\mathcal{N}_{\epsilon_0}(\Gamma_0),
\]
where $\const$ is a constant depending only on $f$. This finishes the proof of the lemma granted we prove the claim made before \eqref{eq:maxgreen}.

It was claimed that for a continuum $\gamma$ and the $\log(1+\delta)$-level line $L$ of $g_{G_\gamma}(\cdot,\infty)$, $\delta\leq1$, it holds that
\begin{equation}
\label{eq:rakhper}
\dist(\gamma,L) \geq \frac{\delta^2\cp(\gamma)}{2},
\end{equation}
where $G_\gamma$ is the unbounded component of the complement of $\gamma$. Inequality \eqref{eq:rakhper} was proved in \cite[Lem. 1]{uPerevRakh}, however, this work was never published and the authors felt compelled to reproduce this lemma here.

Let $\Phi$ be a conformal map of $\Om$ onto $G_\gamma$, $\Phi(\infty)=\infty$. It is well-known that $|\Phi(z)z^{-1}|\to\cp(\gamma)$ as $z\to\infty$ and that $g_{G_\gamma}(\cdot,\infty)=\log|\Phi^{-1}|$, where $\Phi^{-1}$ is the inverse of $\Phi$ (that is, a conformal map of $G_\gamma$ onto $\Om$, $\Phi^{-1}(\infty)=\infty$). Then it follows from \cite[Thm. IV.2.1]{Goluzin} that
\begin{equation}
\label{eq:goluzin}
|\Phi^\prime(z)| \geq \cp(\gamma)\left(1-\frac{1}{|z|^2}\right), \quad z\in\Om.
\end{equation}
Let $z_1\in\gamma$ and $z_2\in L$ be such that $\dist(\gamma,L)=|z_1-z_2|$. Denote by $[z_1,z_2]$ the segment joining $z_1$ and $z_2$.  Observe that $\Phi^{-1}$ maps the annular domain bounded by $\gamma$ and $L$ onto the annulus $\{z:1<|z|<1+\delta\}$. Denote by $S$ the intersection of $\Phi^{-1}((z_1,z_2))$ with this annulus. Clearly, the angular projection of $S$ onto the real line is equal to $(1,1+\delta)$. Then
\begin{eqnarray}
\dist(\gamma,L) &=& \int_{(z_1,z_2)}|dz| = \int_{\Phi^{-1}((z_1,z_2))}|\Phi^\prime(z)||dz| \geq \cp(\gamma)\int_{\Phi^{-1}((z_1,z_2))}\left(1-\frac{1}{|z|^2}\right)|dz| \nonumber \\
{} &\geq& \cp(\gamma)\int_{S}\left(1-\frac{1}{|z|^2}\right)|dz| \geq \cp(\gamma)\int_{(1,1+\delta)}\left(1-\frac{1}{|z|^2}\right)|dz| = \frac{\delta^2\cp(\gamma)}{1+\delta}, \nonumber
\end{eqnarray}
where we used \eqref{eq:goluzin}. This proves \eqref{eq:rakhper} since it is assumed that $\delta\leq1$.
\end{proof}

Set $\pr_\rho(\cdot)$ to be the radial projection onto $\overline\D_\rho$, i.e., $\pr_\rho(z)=z$ if $|z|\leq\rho$ and $\pr_\rho(z)=\rho z/|z|$ if $\rho<|z|<\infty$. Put further $\pr_\rho(K):=\{\pr_\rho(z):z\in K\}$. In the following lemma we show that $\pr_\rho$ can only increase the value of $I_\nu[\cdot]$. 

\begin{lem}
\label{lem:1pr}
Let $\Gamma\in\pd_f$ and $\rho\in[r,1)$, $r=\max_{z\in E_f}|z|$. Then $\pr_\rho(\Gamma)\in\pd_f$ and $\cp_\nu(\pr_\rho(\Gamma))\leq\cp_\nu(\Gamma)$.
\end{lem}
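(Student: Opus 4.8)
The plan is to show two things: first, that $\pr_\rho(\Gamma)$ is still admissible for $f$, and second, that the radial projection does not decrease the $\nu$-energy. For the first claim, note that $\pr_\rho$ is a continuous map that fixes a neighborhood of $E_f$ pointwise (since $\rho\geq r$), so it sends each continuum of $\Gamma$ containing two points of $E_f$ to a continuum of $\pr_\rho(\Gamma)$ still containing those same two points; thus $\pr_\rho(\Gamma)$ is a finite union of continua each meeting $E_f$ in at least two points. What remains is to check that $\overline\C\setminus\pr_\rho(\Gamma)$ is connected and that $f$ extends meromorphically and single-valuedly there. Since $\pr_\rho$ is a retraction of $\overline\C\setminus\overline\D_\rho$ onto $\partial\D_\rho$ and fixes $\overline\D_\rho$, one sees that $\overline\C\setminus\pr_\rho(\Gamma)\supset\overline\C\setminus\Gamma$ on the part inside $\D_\rho$ is unchanged, while outside $\D_\rho$ everything is adjoined; connectedness follows because $\pr_\rho$ is a continuous surjection from the connected set $\overline\C\setminus\Gamma$ onto a dense subset of $\overline\C\setminus\pr_\rho(\Gamma)$ after removing the radial segments, which can be filled in by limits. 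More carefully, $f$ is already holomorphic and single-valued in $\overline\C\setminus\Gamma$; since $f\in\alg(\D)$ its only singularities are in $E_f\subset\overline\D_r$, so $f$ is automatically holomorphic in $\overline\C\setminus\overline\D_\rho$, and $\overline\C\setminus\pr_\rho(\Gamma)=(\D_\rho\setminus\pr_\rho(\Gamma))\cup(\overline\C\setminus\overline\D_\rho)$ glued along $\partial\D_\rho\setminus\pr_\rho(\Gamma)$; single-valuedness and meromorphy propagate by the monodromy theorem once one checks that any loop in $\overline\C\setminus\pr_\rho(\Gamma)$ is homotopic to one in $\overline\C\setminus\Gamma$, which holds because $\pr_\rho$ induces a surjection on fundamental groups (it is a deformation retraction on the relevant region). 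This gives $\pr_\rho(\Gamma)\in\pd_f$.

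For the energy inequality, the key is that the radial projection $\pr_\rho$ is a contraction in a suitable conformally-adapted sense relative to the reflected configuration. Recall $\cp_\nu(\pr_\rho(\Gamma))=\exp\{-I_\nu[\pr_\rho(\Gamma)]\}$, so we must show $I_\nu[\pr_\rho(\Gamma)]\geq I_\nu[\Gamma]$. Using the identity \eqref{eq:weightgreen} (as in the proof of Lemma \ref{lem:1cont}), $I_\nu[\Gamma]$ can be written as a double Green-energy integral of the balayage $\widetilde\nu^*$ of $\nu^*$ onto the unbounded component $D$ of $\overline\C\setminus\Gamma$, and similarly for $\pr_\rho(\Gamma)$ with its unbounded complementary component $D'$. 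The heart of the matter is the pointwise comparison of Green functions: I would show $g_{D'}(z,u)\leq g_D(z,u)$ for $z,u$ outside $\overline\D_\rho$ — equivalently that $D'\supset$ a domain conformally no larger than $D$ from the viewpoint of $\infty$ — by exhibiting $\pr_\rho$, or rather its natural extension, as a map that pushes $\Gamma$ ``outward'' toward $\partial\D_\rho$ while the measure $\nu^*$ lives on $\Om$. Concretely, the $1/\bar z$ reflection turns the statement into a statement about the image $(\pr_\rho\Gamma)^*$ versus $\Gamma^*$ inside $\overline\C\setminus\overline\D_{1/\rho}$, and radial projection onto $\partial\D_\rho$ corresponds under reflection to radial projection onto $\partial\D_{1/\rho}$ from outside; the classical fact (essentially a symmetrization/contraction principle for Green functions, cf.\ the Pólya–Szegő type results) is that pushing a compact set along radii away from $\infty$ can only shrink the Green-capacity-type quantity.

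The main obstacle I expect is precisely this Green-function comparison: radial projection is not a holomorphic map, so one cannot invoke conformal invariance directly, and it is not a contraction in the Euclidean metric for the relevant domains either. I would handle it by reducing to a one-dimensional comparison along each radius, exploiting that $\pr_\rho$ maps a radial segment to a point and is the identity elsewhere, combined with the representation of the Green potential $V_D^{\widetilde\nu^*}$ and the observation that $V_D^{\widetilde\nu^*}$ restricted to a ray is governed by a one-variable potential that can only increase under the collapse of the outer part of the ray onto $\partial\D_\rho$. Alternatively — and this may be cleaner — I would use the variational characterization: $I_\nu[\pr_\rho\Gamma]\geq I[\,(\pr_\rho)_*\mu\,]-2\int U^\nu\,d(\pr_\rho)_*\mu$ where $\mu=\omega_{\Gamma,-U^\nu}$ is the weighted equilibrium measure of $\Gamma$, so it suffices to show that pushing $\mu$ forward by $\pr_\rho$ does not decrease the weighted energy, i.e.\ that $\pr_\rho$ does not increase the logarithmic energy $I[\mu]-2\int U^\nu d\mu$. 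Since $|1-z\bar u|$ and $|z-w|$ both behave monotonically under the radial collapse when $z,w$ or $u$ lie outside $\overline\D_\rho$ and the measure $\nu^*$ sits on $\Om$, a direct kernel estimate $|\pr_\rho z-\pr_\rho w|\leq|z-w|$ (radial projection onto a disk \emph{is} $1$-Lipschitz) together with $|1-\overline{\pr_\rho z}\,u|\le|1-\bar z u|$ for $|u|>1$ — which is the delicate inequality to verify but is elementary in polar coordinates — yields $I[(\pr_\rho)_*\mu]\le I[\mu]$ and $-\int U^\nu d(\pr_\rho)_*\mu\le-\int U^\nu d\mu$, hence the result.
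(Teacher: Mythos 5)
The admissibility half of your argument is in substance the paper's: $\pr_\rho$ is continuous, hence cannot disconnect the components of $\Gamma$, it fixes $\overline\D_\rho\supseteq E_f$, and since $f$ has no singularities outside $\overline\D_r$ it continues along every radial ray, so the germ at infinity stays single-valued and meromorphic off $\pr_\rho(\Gamma)$. Your appeal to surjectivity on fundamental groups is vaguer than it needs to be, but the idea is the right one.

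The energy half, however, has a genuine gap, in fact two. First, the variational setup is oriented backwards: $I_\nu[\pr_\rho(\Gamma)]$ is a \emph{minimum} over probability measures on $\pr_\rho(\Gamma)$, so testing against the competitor $(\pr_\rho)_*\mu$ yields $I_\nu[\pr_\rho(\Gamma)]\le I[(\pr_\rho)_*\mu]-2\int U^\nu\,d(\pr_\rho)_*\mu$, not $\ge$; an upper bound on the right-hand side therefore cannot produce the needed lower bound $I_\nu[\pr_\rho(\Gamma)]\ge I_\nu[\Gamma]$ (indeed, if your two displayed conclusions held, they would prove the \emph{reverse} capacity inequality). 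The correct orientation is to compare an optimal configuration on $\pr_\rho(\Gamma)$ with its preimage on $\Gamma$; this is exactly what the paper does via weighted Fekete points, using that $\delta_m^w(\pr_\rho(\Gamma))\le\delta_m^w(\Gamma)$ and $\delta_m^w\to\cp_\nu$. Second, and independently, the term-by-term kernel estimates fail. The contraction $|\pr_\rho z_1-\pr_\rho z_2|\le|z_1-z_2|$ is true, but because $I$ carries a minus logarithm it gives $I[(\pr_\rho)_*\mu]\ge I[\mu]$, the opposite sign from what you wrote; and the weight inequality is simply false pointwise: with $\nu=\delta_u$ and $w(z)=1/|1-z\bar u|$, take $z=t\xi$, $t\in(\rho,1)$, with $z\bar u$ a positive real; then $|1-z\bar u|=1-t|u|<1-\rho|u|=|1-\pr_\rho(z)\bar u|$, so the weight strictly \emph{decreases} under projection, whereas it strictly increases when $z\bar u$ is negative real. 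Neither factor is separately monotone in the useful direction; only the combined product $|z_1-z_2|w(z_1)w(z_2)$ decreases under $\pr_\rho$, and proving that monotonicity (the paper's explicit computation with the functions $l_1$, $l_2$ for $\nu=\delta_u$, followed by integration in $u$) is the actual core of the lemma. The Green-function/symmetrization route you sketch first is not a substitute either: neither complement contains the other, so no domain-monotonicity of Green functions applies, and there is no off-the-shelf ``radial pushing'' principle to invoke.
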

\begin{proof} As $E_f\subset\overline\D_r$, $f$ naturally extends along any ray $t\xi$, $\xi\in\T$, $t\in(r,\infty)$. Thus, the germ $f$ has a representative which is single-valued and meromorphic outside of $\pr_\rho(\Gamma)$. It is also true that $\pr_\rho$ is a continuous map on $\C$ and therefore cannot disconnect the components of $\Gamma$ although it may merge some of them. Thus, $\pr_\rho(\Gamma)\in\pd_f$.

Set $w=\exp\{U^\nu\}$ and 
\[
\delta_m^w(\Gamma) := \sup_{z_1,\ldots,z_m\in\Gamma}\left[\prod_{1\leq j<i\leq m}|z_i-z_j|w(z_i)w(z_j)\right]^{2/m(m-1)}.
\]
It is known \cite[Thm. III.1.3]{SaffTotik} that $\delta_m^w(\Gamma)\to\cp_\nu(\Gamma)$ as $m\to\infty$. Thus, it is enough to obtain that $\delta_m^w(\pr_\rho(\Gamma))\leq\delta_m^w(\Gamma)$ holds for any $m$. In turn, it is sufficient to show that
\begin{equation}
\label{eq:leq}
|\pr_\rho(z_1)-\pr_\rho(z_2)|w(\pr_\rho(z_1))w(\pr_\rho(z_2)) \leq |z_1-z_2|w(z_1)w(z_2)
\end{equation}
for any $z_1,z_2\in\D$. 

Assume for the moment that $\nu=\delta_u$ for some $u\in\overline\D$, i.e., $w(z)=1/|1-z\bar u|$. It can be readily seen that it is enough to consider only two cases: $|z_1|\leq\rho$, $|z_2|=x>\rho$ and $|z_1|=|z_2|=x>\rho$. In the former situation, \eqref{eq:leq} will follow upon showing that
\[
l_1(x) := \frac{x^2+|z_1|^2-2x|z_1|\cos\phi}{1+x^2|u|^2-2x|u|\cos\psi}
\]
is an increasing function on $(|z_1|,1/|u|)$ for any choice of $\phi$ and $\psi$. Since
\begin{eqnarray}
l_1^\prime(x) &=& 2\frac{x(1-|u|^2|z_1|^2)-|z_1|\cos\phi(1-x^2|u|^2)-|u|\cos\psi(x^2-|z_1|^2)}{(1+x^2|u|^2-2x|u|\cos\psi)^2} \nonumber \\
{} &>& 2\frac{(1-|u||z_1|)(1-x|u|)(x-|z_1|)}{(1+x|u|)^4} > 0, \nonumber
\end{eqnarray}
$l_1$ is indeed strictly increasing on $(|z_1|,1/|u|)$. In the latter case, \eqref{eq:leq} is equivalent to showing that
\[
l_2(x) := (1/x+x|u|^2-2|u|\cos\phi)(1/x+x|u|^2-2|u|\cos\psi)
\]
is a decreasing function on $(\rho,1/|u|)$ for any choice of $\phi$ and $\psi$. This is true since
\[
l_2^\prime(x) = 2(|u|^2-1/x^2)(1/x+x|u|^2-|u|(\cos\phi+\cos\psi)) < 0.
\]
Thus, we verified \eqref{eq:leq} for $\nu=\delta_u$. 

In the general case it holds that
\[
|z_1-z_2|w(z_1)w(z_2) = \exp\left\{\int\log\frac{|z_1-z_2|}{|1-z_1\bar u||1-z_2\bar u|}d\nu(u)\right\}.
\]
As the kernel on the right-hand side of the equality above gets smaller when $z_j$ is replaced by $\pr_\rho(z_j)$, $j=1,2$, by what precedes, the validity of \eqref{eq:leq} follows.
\end{proof}

Combining Lemmas \ref{lem:1haus}--\ref{lem:1pr}, we obtain the existence of minimal sets.

\begin{lem}
\label{lem:1b}
A minimal set $\Gamma_\nu$ exists and is contained in $\overline\D_r$, $r=\max\{|z|:~z\in E_f\}$.
\end{lem}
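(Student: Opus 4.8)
The natural approach is the direct method of the calculus of variations applied to the functional $I_\nu[\cdot]=-\log\cp_\nu(\cdot)$, which we must \emph{maximize} over $\pk_f$; the three lemmas just established are exactly the ingredients this needs. First, by Lemma~\ref{lem:1haus} we may work over the smaller family $\pd_f$: putting $c^*:=\inf_{K\in\pk_f}\cp_\nu(K)=\inf_{\Gamma\in\pd_f}\cp_\nu(\Gamma)$, choose $\Gamma_k\in\pd_f$ with $\cp_\nu(\Gamma_k)\to c^*$. The obstacle to a naive compactness argument is that such a sequence could drift toward $\T$, where neither compactness in the Hausdorff metric nor continuity of $I_\nu$ is available; this is precisely what Lemma~\ref{lem:1pr} removes. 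Replacing $\Gamma_k$ by its radial projection $\pr_r(\Gamma_k)$ with $r:=\max_{z\in E_f}|z|<1$ keeps the sets in $\pd_f$ and does not increase $\cp_\nu$, so after this substitution $\Gamma_k\in\pd_f$, $\Gamma_k\subset\overline\D_r$, and still $\cp_\nu(\Gamma_k)\to c^*$ by squeezing.

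Next I would pass to a limit. Let $\pd^r:=\{\Gamma\in\pd_f:\Gamma\subset\overline\D_r\}$; since $\clos_{d_H}(\pd_f)$ is compact and the inclusion $\Gamma\subset\overline\D_r$ persists under $d_H$-limits, $\clos_{d_H}(\pd^r)$ is a compact metric space every element of which lies in $\overline\D_r\subset\D$ (this is where $r<1$ is used). Hence Lemma~\ref{lem:1cont} applies to $\pd^r$, and along a $d_H$-convergent subsequence $\Gamma_k\to\Gamma_0\in\clos_{d_H}(\pd^r)$ we get $\cp_\nu(\Gamma_0)=\lim_k\cp_\nu(\Gamma_k)=c^*$ by continuity. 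By the remarks preceding Lemma~\ref{lem:1cont}, $\Gamma_0$ is a finite union of continua, each carrying at least two points of $E_f$, but its complement may be multiply connected; this is the second point requiring care. I would resolve it by passing to the polynomial convex hull $\widehat\Gamma_0$ of $\Gamma_0$ (adjoin the bounded complementary components): it has connected complement, it still belongs to $\pd_f\subset\pk_f$ since $\Gamma_0$ avoids $\T$, and it remains contained in $\overline\D_r$ because that set is convex. Finally, by \eqref{eq:weightgreen} the energy $I_\nu[\cdot]$ is determined by the unbounded component of the complement alone, and that component coincides for $\Gamma_0$ and $\widehat\Gamma_0$; hence $\cp_\nu(\widehat\Gamma_0)=\cp_\nu(\Gamma_0)=c^*=\inf_{\pk_f}\cp_\nu$. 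Thus $\widehat\Gamma_0$ verifies Definition~\ref{df:minset}(i) and lies in $\overline\D_r$.

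It remains to isolate the inclusion-smallest set of minimal $\nu$-capacity demanded by Definition~\ref{df:minset}(ii). I would take $\Gamma_\nu$ to be the intersection of all $K\in\pk_f$ with $\cp_\nu(K)=c^*$: it is a compact subset of $\overline\D_r$ whose complement, being the union of the (connected, $\infty$-containing, hence overlapping) complements of those sets, is again connected, and on which $f$ extends single-valuedly and meromorphically; granting this, monotonicity of the weighted capacity gives $\cp_\nu(\Gamma_\nu)\le c^*$, while $\cp_\nu(\Gamma_\nu)\ge c^*$ by definition of $c^*$, so $\Gamma_\nu\in\pk_f$ realizes the minimum and, by construction, sits inside every competitor of minimal $\nu$-capacity. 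The step I expect to be the genuine obstacle is twofold: the \emph{a priori} confinement of the minimizing sequence away from $\T$ (supplied by Lemma~\ref{lem:1pr}), and — above all — the verification that $f$ stays single-valued on the enlarged complement, i.e.\ ruling out monodromy when the branches of $f$ living on the various complements are glued together, which one controls using the thin, smooth-cut structure that minimal sets are forced to have; by contrast the multiply-connected-limit issue is disposed of cheaply by the polynomial-hull observation together with the fact that $I_\nu$ sees only the outer complementary component.
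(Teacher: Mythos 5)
Your first two paragraphs reproduce the paper's argument almost verbatim: reduce to $\pd_f$ via Lemma~\ref{lem:1haus}, take a maximizing sequence for $I_\nu[\cdot]$, replace it by its radial projection using Lemma~\ref{lem:1pr} so that it stays in $\overline\D_r$, extract a $d_H$-limit in the compact space $\clos_{d_H}(\pd_r)$, invoke the continuity of $I_\nu[\cdot]$ from Lemma~\ref{lem:1cont}, and pass to the polynomial convex hull, whose $\nu$-capacity is unchanged because $I_\nu$ only sees the unbounded complementary component (Section~\ref{sss:wcf}, \eqref{eq:weightgreen}). This part is correct and is exactly the paper's proof.

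Your third paragraph, however, attempts more than the lemma (as the paper uses it) actually establishes, and it is there that a genuine gap appears. The paper's proof of this lemma only produces a set realizing $\inf_{\pk_f}\cp_\nu$ inside $\overline\D_r$; property (ii) of Definition~\ref{df:minset} (inclusion in every competitor of minimal $\nu$-capacity) is \emph{not} obtained here but is deduced a posteriori from Lemmas~\ref{lem:1sym} and~\ref{lem:1e}: any minimal-capacity competitor contains a member of $\pd_f$ of the same capacity, which by the variational argument must be symmetric, hence coincides with the unique symmetric set. Your intersection construction $\Gamma_\nu:=\bigcap\{K\in\pk_f:\cp_\nu(K)=c^*\}$ stands or falls on the claim that $f$ is single-valued and meromorphic on the union of the complements, and this is precisely the monodromy problem you flag without resolving: two minimizers $K_1,K_2$ determine branches of $f$ that agree near $\infty$, but a loop contained in $(\overline\C\setminus K_1)\cup(\overline\C\setminus K_2)$ need not lie in either complement separately, so nothing at this stage rules out nontrivial monodromy along it. Appealing to "the thin, smooth-cut structure that minimal sets are forced to have" is circular here, since that structure is only established in Lemma~\ref{lem:1sym}. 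So: keep your first two paragraphs as the proof of the lemma, and defer (ii) to the uniqueness machinery rather than trying to extract it from an intersection argument.
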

\begin{proof}
By Lemma \ref{lem:1haus}, it is enough to consider only the sets in $\pd_f$. Let $\{\Gamma_n\}\subset\pd_f$ be a maximizing sequence for $I_\nu[\cdot]$ (minimizing sequence for the $\nu$-capacity), that is, $I_\nu[\Gamma_n]$ tends to  $\sup_{\Gamma\in\pd_f}I_\nu[\Gamma]$ as $n\to\infty$. Then it follows from Lemma \ref{lem:1pr} that $\{\pr_r(\Gamma_n)\}$ is another maximizing sequence for $I_\nu[\cdot]$ in $\pd_f$, and $\pr_r(\Gamma_n)\in\pd_r:=\{\Gamma\in\pd_f:~\Gamma\subseteq\overline\D_r\}$. As $\clos_{d_H}(\pd_r)$ is a compact metric space, there exists at least one limit point of $\{\pr_r(\Gamma_n)\}$ in $\clos_{d_H}(\pd_r)$, say $\Gamma_0$, and $\Gamma_0\subset\overline\D_r$. Since $I_\nu[\cdot]$ is continuous on $\clos_{d_H}(\pd_r)$ by Lemma \ref{lem:1cont}, $I_\nu[\Gamma_0]=\sup_{\Gamma\in\pd_f}I_\nu[\Gamma]$. Finally, as the polynomial convex hull of $\Gamma_0$, say $\Gamma_0^\prime$, belongs to $\pd_f$ and since $I_\nu[\Gamma_0]=I_\nu[\Gamma_0^\prime]$ (see Section~\ref{sss:wcf}), we may put $\Gamma_\nu=\Gamma_0^\prime$.
\end{proof}

To continue with our analysis we need the following theorem \cite[Thm. 3.1]{Kuz80}. It describes the continuum of minimal condenser capacity connecting finitely many given points as a union of closures of the \emph{non-closed negative critical trajectories} of a quadratic differential. Recall that a negative trajectory of the quadratic differential $q(z)dz^2$ is a maximally continued arc along which $q(z)dz^2<0$; the trajectory is called critical if it ends at a zero or a pole of $q(z)$\cite{Kuz80,Pommerenke2}.

\begin{kuzmina}
\label{thm:K}
Let $A=\{a_1,\ldots,a_m\}\subset\D$ be a set of $m\geq2$ distinct points. Then there uniquely exists a continuum $K_0$, $A\subset K_0\subset\D$, such that 
\[
\cp(K_0,\T) \leq \cp(K,\T)
\]
for any other continuum with $A\subset K\subset\D$. Moreover, there exist $m-2$ points $b_1,\ldots,b_{m-2}\in\D$ such that $K_0$ is the union of the closures of the non-closed negative critical trajectories of the quadratic differential
\[
q(z)dz^2, \quad q(z) := \frac{(z-b_1)\cdot\ldots\cdot(z-b_{m-2})(1-\bar b_1z)\cdot\ldots\cdot(1-\bar b_{m-2}z)}{(z-a_1)\cdot\ldots\cdot(z-a_m)(1-\bar a_1 z)\cdot\ldots\cdot(1-\bar a_mz)},
\]
contained in $\D$. There exists only finitely many  such trajectories. Furthermore, the equilibrium potential $V_\D^{\omega_{(K_0,\T)}}$ satisfies $\displaystyle\left(2\partial_zV_\D^{\omega_{(K_0,\T)}}(z)\right)^2 = q(z)$, $z\in\overline\D$.
\end{kuzmina}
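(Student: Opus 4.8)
The statement is the Chebotarev--Kuzmina extremal problem in the condenser setting, and the plan is the classical one: produce the extremal continuum by compactness, extract its symmetry (the $S$-property) from a local variation, and then identify the governing quadratic differential as the square of the complex derivative of the associated equilibrium potential. For \emph{existence}, note that $\cp(K,\T)$ stays bounded below over continua $K$ with $A\subset K\subset\D$ (such a $K$ contains the two most distant points of $A$, hence $\diam K\ge d_0>0$), while $\cp(K,\T)\to\infty$ as $K$ approaches $\T$; thus any minimizing sequence remains in a fixed compact subset of $\D$, and by $d_H$-compactness of the continua lying there together with the continuity of $\cp(\cdot,\T)$ one gets a minimizer $K_0$, still a continuum through $A$, which after replacement by the outer boundary of its unbounded complementary component has connected complement. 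For \emph{uniqueness} I would run a Dirichlet--integral comparison in the spirit of the uniqueness part of Theorem~\ref{thm:minset}: if $K_0,K_1$ are both extremal, the Green energy of $\omega_{(K_0,\T)}-\omega_{(K_1,\T)}$ is forced to vanish by the $S$-property of each, whence $K_0=K_1$ (alternatively, Jenkins' general coefficient theorem certifies uniqueness of the trajectory configuration once $q$ has been identified).

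\emph{Local structure and the $S$-property.} A one-parameter variation $z\mapsto z+\varepsilon\,\theta(z)$ supported near a subarc of $K_0$, combined with stationarity of $\cp(\cdot,\T)$, shows first that $K_0$ is a finite union of analytic arcs meeting at finitely many points --- so $K_0=E_0\cup E_1\cup\bigcup\gamma_j$ is a smooth cut in the sense of Definition~\ref{df:elmin} with $E_0=A$ (each $a_i$ the tip of exactly one $\gamma_j$) and $E_1$ a finite set of junction points --- and second that the one-sided normal derivatives of $V:=V_\D^{\omega_{(K_0,\T)}}$ agree on $\bigcup\gamma_j$. Here $V$ is harmonic in $\D\setminus K_0$, vanishes on $\T$, and is constant on $K_0$. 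Put $H:=2\partial_z V$, holomorphic in $\D\setminus K_0$. Constancy of $V$ on each $\gamma_j$ annihilates its tangential derivative there, and with the normal-derivative balance this yields $H^+=-H^-$ across each $\gamma_j$, so $q:=H^2$ continues analytically across every $\gamma_j$. A purely local computation --- the square-root behaviour of $V$ at a tip $a_i$, and the junction behaviour established in the proof of Proposition~\ref{prop:minset} --- shows $q$ has a simple pole at each $a_i$ and a zero of order $\#\{\gamma_j\ \text{ending at}\ e\}-2$ at each $e\in E_1$.

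\emph{Globalization and counting.} Since $V$ vanishes on $\T$, odd reflection $V(z)\mapsto-V(1/\bar z)$ extends $V$ harmonically off $K_0\cup K_0^*$, so $q=H^2$ becomes a \emph{rational} function on $\overline\C$ with simple poles exactly at $a_1,\dots,a_m$ and $1/\bar a_1,\dots,1/\bar a_m$ (we may assume $0\notin A$ after a conformal automorphism of $\D$, which leaves the problem invariant). Because $V$ tends to a constant at $\infty$ with gradient of order $|z|^{-2}$, $q$ has a zero of order $4$ at $\infty$, hence $2m-4$ finite zeros. As $\overline\C\setminus K_0$ is connected and $K_0$ contains no Jordan curve --- a loop would bound a domain on which $V$ is constant, contradicting the normal-derivative balance --- $K_0$ is a tree with leaves $A$ and interior vertices of degree $\ge3$, so $\sum_{e\in E_1}(\deg e-2)=m-2$; by the reflection symmetry the remaining $m-2$ zeros are the points $1/\bar b_j$. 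This forces
\[
q(z)=\frac{\prod_{j=1}^{m-2}(z-b_j)(1-\bar b_j z)}{\prod_{i=1}^{m}(z-a_i)(1-\bar a_i z)}
\]
up to a positive constant, which is normalized to $1$ by matching the leading coefficient of $\bigl(2\partial_z V\bigr)^2$; that is, $\bigl(2\partial_z V_\D^{\omega_{(K_0,\T)}}(z)\bigr)^2=q(z)$ on $\overline\D$.

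\emph{Trajectory structure, and the main obstacle.} Constancy of $V$ on $K_0$ makes $2\partial_z V\,dz$ purely imaginary along each $\gamma_j$, hence $q(z)\,dz^2\le0$ there: the arcs $\gamma_j$ are negative trajectories of $q(z)dz^2$, they are critical (every endpoint is a zero or a pole of $q$) and non-closed (a closed negative trajectory lies in a ring domain foliated by closed trajectories and cannot meet $A$), only finitely many occur (finiteness of separating trajectories of a rational quadratic differential), and their closures reconstitute $K_0$. I expect the \textbf{main obstacle} to be the regularity-and-variation step: establishing a priori that the extremal continuum is a finite union of analytic arcs with exactly the stated local structure, and extracting the precise normal-derivative identity, since all of the subsequent quadratic-differential bookkeeping --- simple poles at the $a_i$, the free zeros $b_j$, the tree structure --- rests upon it; proving uniqueness of the minimizer is the second delicate point.
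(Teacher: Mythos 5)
This statement is Theorem~K, which the paper does not prove at all: like the other unnumbered theorems (S, P, GR) it is imported verbatim from the literature, here from Kuzmina \cite[Thm.~3.1]{Kuz80}, so there is no in-paper proof to compare against. Your sketch follows what is essentially the classical route of Kuzmina/Jenkins (and of the unpublished Perevoznikova--Rakhmanov manuscript \cite{uPerevRakh}): existence by compactness, local structure and the symmetry property by variation, and identification of $q=(2\partial_z V)^2$ as a rational quadratic differential by odd reflection across $\T$ together with zero/pole counting. The bookkeeping in your globalization step is sound: the $2m$ simple poles at $a_i$ and $1/\bar a_i$, the order-$4$ vanishing of $q$ at infinity coming from $H(z)=z^{-2}\,\overline{H(1/\bar z)}$, and the tree count $\sum_{e\in E_1}(\deg e-2)=m-2$ all check out.

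There are, however, genuine gaps beyond the one you flag. First, ``continuity of $\cp(\cdot,\T)$'' along Hausdorff limits of continua is not free --- capacity is badly discontinuous under $d_H$ in general --- and the paper devotes Lemma~\ref{lem:1cont} (via the level-line estimate \eqref{eq:rakhper}) plus the radial-projection Lemma~\ref{lem:1pr} to the analogous weighted statement; your existence argument needs that machinery, not just the remark that $\cp(K,\T)\to\infty$ near $\T$. Second, the step you yourself call the main obstacle is the proof: ``a one-parameter variation shows $K_0$ is a finite union of analytic arcs'' names the method without executing it, and everything downstream (simple poles at the tips, the identity $H^+=-H^-$, the equiangular junctions) presupposes it. Be aware that the paper's own logic runs the other way: Lemma~\ref{lem:1sym} \emph{quotes} Theorem~K to transplant the known local structure onto its weighted extremal sets via a conformal map, and Proposition~\ref{prop:minset} \emph{assumes} the smooth-cut structure; so you cannot invoke Proposition~\ref{prop:minset} for the junction behaviour of $K_0$ without circularity. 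Third, the uniqueness argument ``in the spirit of Lemma~\ref{lem:1e}'' requires both competitors to already be smooth cuts satisfying \eqref{eq:mincondcap}, so it must come after the regularity step, and as stated (``the Green energy of $\omega_{(K_0,\T)}-\omega_{(K_1,\T)}$ is forced to vanish'') it is a placeholder rather than an argument; likewise the normalization of the multiplicative constant in $q$ to $1$ deserves an actual computation (e.g.\ from $\oint H\,dz=-2\pi i$ or from Gauss' theorem on $\T$) rather than ``matching the leading coefficient.''
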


The last equation in Theorem~\hyperref[thm:K]{K} should be understood as follows. The left-hand side of this equality is defined in $\D\setminus K_0$ and represents a holomorphic function there, which coincides with $q$ on its domain of definition. As $K_0$ has no interior because critical trajectories are analytic arcs  with limiting tangents at their endpoints \cite{Pommerenke2}, the equality on the whole set $\overline\D$ is obtained by continuity. Note also that $\D\setminus K_0$ is connected by unicity claimed in Theorem \ref{thm:K}, for the polynomial convex hull of $K_0$ has the same Green capacity  as $K_0$ ({\it cf.} section \ref{sss:cc}). Moreover, it follows from the local theory of quadratic differentials that each $b_j$ is the endpoint of at least three arcs of $K_0$ (because $b_j$ is a zero of $q(z)$) and that each $a_j$ is the endpoint of exactly one arc of $K_0$ (because $a_j$ is a simple pole of $q(z)$).

Having Theorem~\hyperref[thm:K]{K} at hand, we are ready to describe the structure of a minimal set $ \Gamma_\nu$. 

\begin{lem}
\label{lem:1sym}
A minimal set $\Gamma_\nu$ is symmetric \textnormal{(}Definition \ref{df:sym}\textnormal{)} with respect to $\nu^*$.
\end{lem}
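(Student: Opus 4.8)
The plan is to establish the symmetry (S-property) of $\Gamma_\nu$ by a \emph{local} argument: near any point of $\Gamma_\nu$ which is not one of the finitely many ``exceptional'' points, a minimal $\nu$-capacity set must locally coincide, up to a change of variables, with a minimal \emph{condenser} capacity continuum joining finitely many points, to which Theorem~K applies and yields the quadratic-differential structure together with the equal-normal-derivatives relation. The external field $-U^\nu$ being harmonic in $\D$ plays the role of the conformal (logarithmic) weight, so that the weighted equilibrium problem becomes, after exponentiating the field into a conformal factor, a pure geometric extremal problem of the Lavrentiev--Kuzmina type. First I would invoke Lemma~\ref{lem:1b} to fix a minimal set $\Gamma_\nu\subset\overline\D_r$ and its equilibrium measure $\omega:=\omega_{\Gamma_\nu,-U^\nu}$; the goal is to show $\Gamma_\nu$ is a smooth cut for $f$ satisfying \eqref{eq:GreenPotSym} for $\omega=\nu^*$, which by the discussion following Definition~\ref{df:sym} is equivalent to the S-property in the field $-U^\nu$.

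The key steps, in order, are as follows. \textbf{(1) Reduce to a neighborhood of a regular boundary point.} Fix a point $z_0$ in the support of $\omega$ that is not in $E_f$, not an isolated point, and at which the density of $\omega$ is positive; choose a small disk $B=B(z_0,\delta)$ with $\overline B\cap(E_f\cup\supp(\nu^*))=\varnothing$ and $\overline B\cap\Gamma_\nu$ a closed subarc-like piece of $\Gamma_\nu$ not meeting any other component. \textbf{(2) Competing sets inside $B$.} Any compact $K\subset\D$ obtained from $\Gamma_\nu$ by replacing $\Gamma_\nu\cap B$ with another continuum $L\subset B$ joining the two ``exit points'' $p_1,p_2=\partial B\cap\Gamma_\nu$ (and retaining the rest of $\Gamma_\nu$) is again admissible for $f$, since modifying the cut inside a branch-point-free disk does not affect single-valuedness; hence $I_\nu[K]\le I_\nu[\Gamma_\nu]$. \textbf{(3) Localize the energy functional.} Using the Euler--Lagrange (variational) characterization of the $\nu$-equilibrium measure, $V^{\omega}-U^\nu \equiv c(\,{-U^\nu};\Gamma_\nu)$ q.e.\ on $\Gamma_\nu$ and $\ge c$ off it, one shows that the first-order change in $I_\nu$ under such a local deformation is governed entirely by the restriction $\omega|_B$ and the harmonic function $V^{\omega|_{\Gamma_\nu\setminus B}}-U^\nu$ inside $B$; absorbing the exponential of this harmonic function into a conformal weight $|\Phi'|$, the problem of minimizing $I_\nu$ over local competitors $L$ becomes, via the substitution $w=\Phi(z)$, exactly the problem of minimizing the logarithmic capacity (equivalently, condenser capacity relative to the image of $\D$) of a continuum joining $\Phi(p_1),\Phi(p_2)$. \textbf{(4) Apply Theorem~K.} In the $w$-plane the local minimizer is a union of closures of non-closed negative critical trajectories of an explicit quadratic differential $q(w)\,dw^2$; in particular $\Gamma_\nu\cap B$ consists of finitely many analytic arcs with definite tangents, and $q$ has at worst simple poles at the images of $p_1,p_2$ and a zero of order $(\#\text{arcs}-2)$ at any interior meeting point. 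The relation $(2\partial_w V^{\omega})^2=q(w)$ from Theorem~K, pulled back to the $z$-plane, is precisely the statement that $\partial_z V^{\omega}_{\overline\C\setminus\Gamma_\nu}$ has equal moduli (hence the normal derivative of $V^\omega$ is continuous) across each $\gamma_j$, i.e.\ \eqref{eq:GreenPotSym}. \textbf{(5) Globalize and identify $\omega$.} The finitely many exceptional points (branch points, meeting points, points of the support where the density vanishes) are handled by a standard argument: the arcs are analytic where the above applies, each branch point in $E_0$ is an endpoint of exactly one arc (simple pole of $q$) and each element of $E_1$ is a triple-or-higher meeting point (zero of $q$), so $\Gamma_\nu$ is indeed a smooth cut for $f$; since $V^{\omega}-U^\nu$ is constant on the regular arcs, $\omega=\widehat{\nu^*}$ (balayage) and \eqref{eq:GreenPotSym} holds with $\omega$ replaced by $\nu^*$ using \eqref{eq:whnustar}.

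The main obstacle I expect is \textbf{step (3)}: rigorously showing that the \emph{global} weighted extremal problem degenerates, near $z_0$, into a \emph{local} geometric problem to which Theorem~K directly applies. This requires (a) controlling the effect on the equilibrium measure of an $O(\delta)$-deformation of the cut — one must show the unchanged part $\Gamma_\nu\setminus B$ contributes a weight that is smooth (harmonic, nonvanishing) in $B$ and that the first variation is dominated by the local term — and (b) justifying that minimality of $I_\nu[\Gamma_\nu]$ forces $\Gamma_\nu\cap B$ to be \emph{exactly} (not just asymptotically) a critical-trajectory configuration, which is where one needs the sharp uniqueness in Theorem~K together with the fact, noted after Theorem~K, that critical trajectories are analytic arcs with limiting tangents so that the competitor class is rich enough to detect any deviation. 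Secondary technical points — that the modified set stays in $\pd_f$ (a consequence of the Zorn/monodromy argument already run in Lemma~\ref{lem:1haus}), that $\overline B$ avoids $\supp(\nu^*)$ so $U^\nu$ is harmonic there, and the handling of $q.e.$ exceptional sets via regularity theory of the arcs (Section~\ref{dubalai}) — are routine but must be assembled carefully.
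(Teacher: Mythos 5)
Your overall strategy --- force $\Gamma_\nu$ to coincide with a Kuzmina-type minimal condenser continuum via a conformal map and then read off the symmetry from the quadratic differential of Theorem~\hyperref[thm:K]{K} --- is the right one, but the way you localize is where the argument breaks, and you have correctly sensed that step (3) is the danger point. The paper does \emph{not} localize to a small branch-point-free disk $B$ around a generic point of $\supp(\omega)$; it localizes to a sublevel domain $\Omega=\{V<\delta\}$ of $V:=V_{D_\nu}^{\widetilde\nu^*}$ containing an \emph{entire connected component} $\gamma$ of $\Gamma_\nu$ (with $\widetilde\nu^*$ the balayage of $\nu^*$ onto a circle $\T_\rho$ outside $\Omega$, and $\delta$ a regular value so that $L=\partial\Omega$ is an analytic Jordan curve). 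On such a domain $V$ is \emph{exactly} --- not merely to first order --- $\delta$ times the condenser equilibrium potential of $(L,\gamma)$, so replacing $\gamma$ by $\phi^{-1}(\widetilde K)$, where $\widetilde K$ is the Kuzmina continuum for the interior points $\phi(E_f\cap\gamma)$, produces an exact energy comparison: the difference $I_\nu[\Gamma]-I_\nu[\Gamma_\nu]$ is computed via a signed measure $\sigma$ on $L$ as $\delta\sigma(L)+I_{\widetilde D}[\sigma]$, with $\sigma(L)=\delta(\cp(K,\T)-\cp(\widetilde K,\T))$, and positivity of the Green energy of $\sigma$ gives the contradiction. Your version cannot be repaired as stated for three reasons. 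First, Theorem~\hyperref[thm:K]{K} requires at least two prescribed points \emph{interior} to the disk; in a branch-point-free disk the only data are the exit points $p_1,p_2\in\partial B$, for which there is no Kuzmina problem (the condenser capacity of a continuum joining two boundary points relative to $B$ is degenerate), so step (4) has no theorem to invoke. Second, step (2) already assumes $\partial B\cap\Gamma_\nu=\{p_1,p_2\}$, i.e.\ that $\Gamma_\nu$ is locally a crosscut of $B$ --- but a priori $\Gamma_\nu$ is only a finite union of continua and could be locally wild; the local arc structure is part of the conclusion, not a hypothesis. Third, the claim that the first variation of the global functional $I_\nu$ under a deformation supported in $B$ reduces to a local weighted capacity is the Perevoznikova--Rakhmanov variational technique, which is a genuinely different (and unproved here) route; even granting it, a first-order stationarity condition yields \eqref{eq:GreenPotSym} only where the arcs are already known to be analytic, and gives no access to the structure at the branch points ($E_0$ being endpoints of exactly one arc), at the junctions $E_1$, or to the non-vanishing of the jump of $f$ across each arc (which the paper gets by excising an arc of positive equilibrium mass and strictly decreasing the $\nu$-capacity).

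A secondary but real omission: even after the geometric identification, one must pass from the symmetry of the Green potential of $\omega_{(\T,K)}$ in the model domain back to the symmetry of $V_{\overline\C\setminus\Gamma_\nu}^{\nu^*}$; the paper does this through the conformal factor $|\phi'|$ multiplying both one-sided normal derivatives and the identity $V\equiv\widetilde V$, whereas your step (5) asserts the conclusion via \eqref{eq:whnustar} without connecting the local model potential to $V_{D_\nu}^{\widetilde\nu^*}$.
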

\begin{proof}
Let $\widetilde\nu^*$ be the balayage of $\nu^*$ onto $\T_\rho$ with $\rho<1$ but large enough to contain $\Gamma_\nu$ in the interior of $\D_\rho$. Let $\gamma$ be any of the continua constituting $\Gamma_\nu$. Clearly $V:=V_{D_\nu}^{\widetilde\nu^*}$, where $D_\nu=\overline\C\setminus\Gamma_\nu$, is harmonic in $D_\nu\setminus\T_\rho$ and extends continuously to the zero function on $\Gamma_\nu$ since $\Gamma_\nu$ is a regular set.  Moreover, by Sard's theorem on regular values \cite[Sec. 1.7]{GuilleminPollack} there exists $\delta>0$ arbitrarily small such that $\Omega$, the component of $\{z:V(z)<\delta\}$ containing $\gamma$, is itself contained in $\D_\rho$ and its boundary is an analytic Jordan curve, say $L$. Let $\phi$ be a conformal map of $\Omega$ onto $\D$. Set $\widetilde\gamma:=\phi^{-1}(\widetilde K)$, where $\widetilde K$ is the continuum of minimal condenser capacity\footnote{In other words, if we put $\phi(E_f\cap\gamma)=\{p_1,\ldots,p_m\}$ and $g(z):=1/\sqrt[m]{\prod(z-p_j)}$, then $\widetilde K$ is the set of minimal condenser capacity for $g$ as defined in Definition \ref{df:dmcc}.} for $\phi(E_f\cap\gamma)$. Our immediate goal is to show that $\gamma=\widetilde\gamma$. 

Assume to the contrary that $\gamma\neq\widetilde\gamma$, i.e., $\phi(\gamma)=:K\neq\widetilde K$, and therefore
\begin{equation}
\label{eq:asump}
\cp(\widetilde K,\T)<\cp(K,\T).
\end{equation}
 Set
\begin{equation}
\label{eq:dftV}
\widetilde V := \left\{
\begin{array}{ll}
\delta\cp(\widetilde K,\T)\left[V_{\overline\C\setminus\widetilde K}^{\omega_{(\T,\widetilde K)}}\circ\phi\right], & z\in \overline\Omega, \smallskip \\
V, & z\notin\overline\Omega,
\end{array}
\right.
\end{equation}
where $\omega_{(\T,\widetilde K)}$ is the Green equilibrium distribution on $\T$ relative to $\overline\C\setminus\widetilde K$. The functions $V$ and $\widetilde V$ are continuous in $\overline\Omega$ and equal to $\delta$ on $L$. Furthermore, they are harmonic in $\Omega\setminus\gamma$ and $\Omega\setminus\widetilde\gamma$ and equal to zero on $\gamma$ and $\widetilde\gamma$, respectively. Then it follows from Lemma~\ref{lem:nder1} and the conformal invariance of the condenser capacity \eqref{eq:coninv} that
\begin{equation}
\label{eq:normalV}
\frac{1}{2\pi}\int_L\frac{\partial V}{\partial\n}ds = -\delta\cp(K,\T) \quad \mbox{and} \quad \frac{1}{2\pi}\int_L\frac{\partial\widetilde V}{\partial\n}ds = -\delta\cp(\widetilde K,\T),
\end{equation}
where $\partial/\partial\n$ stands for the partial derivative with respect to the inner normal on $L$. (In Lemma~\ref{lem:nder1}, $L$ should be contained within the domain of harmonicity of $V$ and $\widetilde V$. As $V$ and $\widetilde V$ are constant on $L$, they can be harmonically continued across by reflection. Thus, Lemma~\ref{lem:nder1} does apply.) Moreover, $\widetilde V-V_{\widetilde D}^{\widetilde\nu^*}$ is a continuous function on $\overline\C$ that is harmonic in $\widetilde D\setminus L$ by the first claim in Section~\ref{ss:gp}, where $\widetilde D:=(D_\nu\cup\gamma)\setminus\widetilde\gamma$, and is identically zero on $\Gamma:=\overline\C\setminus\widetilde D$. Thus, we can apply Lemma~\ref{lem:nder0} with $\widetilde V-V_{\widetilde D}^{\widetilde\nu^*}$ and $\widetilde D$ (smoothness properties of $V-V_{\widetilde D}^{\widetilde\nu^*}$ follow from the fact that $\widetilde V$ can be harmonically continued across $L$), which states that
\begin{equation}
\label{eq:reprtV}
\widetilde V = V_{\widetilde D}^{\widetilde\nu^*-\sigma}, \quad d\sigma := \frac{1}{2\pi}\frac{\partial(\widetilde V-V)}{\partial\n}ds,
\end{equation}
where $\sigma$ is a finite signed measure supported on $L$ (observe that the outer and inner normal derivatives of $V_{\widetilde D}^{\widetilde\nu^*}$ on $L$ are opposite to each other as $V_{\widetilde D}^{\widetilde\nu^*}$ is harmonic across $L$ and therefore they do not contribute to the density of $\sigma$; due to the same reasoning the outer normal derivative of $\widetilde V$ is equal to minus the inner normal derivative of $V$ by \eqref{eq:dftV}). Hence, one can easily deduce from \eqref{eq:normalV} and \eqref{eq:asump} that
\begin{equation}
\label{eq:positivetotal}
\sigma(L) = \delta\left(\cp(K,\T)-\cp(\widetilde K,\T)\right)>0.
\end{equation}

Since the components of $\Gamma_\nu$ and $\Gamma$ contain exactly the same branch points of $f$ and $\Gamma$ has connected complement (for $D_\nu$ is connected and so is $\overline{\C}\setminus\widetilde{\gamma}$ because $\D\setminus\widetilde{K}$ is connected), it follows that $\Gamma\in\pd_f$ by the monodromy theorem. Moreover, we obtain from \eqref{eq:weightgreen}, \eqref{eq:dftV}, and \eqref{eq:reprtV} that
\[
I_\nu[\Gamma] - I_\nu[\Gamma_\nu] = I_{\widetilde D}[\widetilde\nu^*] - I_{D_\nu}[\widetilde\nu^*] = \int\left(V_{\widetilde D}^{\widetilde\nu^*} - V\right)d\widetilde\nu^* = \int V_{\widetilde D}^\sigma d\widetilde\nu^*
\]
since $\supp(\widetilde\nu^*)\cap\overline\Omega=\varnothing$. Further, applying the Fubini-Tonelli theorem and using \eqref{eq:reprtV} once more, we get that
\[
I_\nu[\Gamma] - I_\nu[\Gamma_\nu] = \int V_{\widetilde D}^{\widetilde\nu^*}d\sigma = \int\widetilde Vd\sigma + I_{\widetilde D}[\sigma] = \delta\sigma(L) + I_{\widetilde D}[\sigma] > 0
\]
by \eqref{eq:positivetotal} and since the Green energy of a signed compactly supported measure of finite Green energy is positive by \cite[Thm. II.5.6]{SaffTotik}. However, the last inequality clearly contradicts the fact that $I_\nu[\Gamma_\nu]$ is maximal among all sets in $\pd_f$ and therefore $\gamma=\widetilde\gamma$. Hence, $K = \widetilde K=\phi(\gamma)$ and $\widetilde V=V$.

Observe now that by Theorem~\hyperref[thm:K]{K} stated just before this lemma and the remarks thereafter, the set  $K$ consists of a finite number of open analytic arcs and their endpoints. These fall into two classes $a_1,\ldots,a_m$ and $b_1,\ldots,b_{m-2}$, members of the first class being endpoints of exactly one arc and members of the second class being endpoints of at least three arcs. Thus, the same is true for $\gamma$. Moreover, the jump of $f$ across any open arc $C\subset\gamma$ cannot vanish, otherwise excising out this arc would leave us with an admissible compact set $\Gamma^\prime\subset\Gamma_\nu$ of strictly smaller $\nu$-capacity since $\omega_{\Gamma_\nu,-U^\nu}(C)>0$ by (\ref{eq:whnustar}) and the properties of balayage at regular points (see Section~\ref{sss:wcf}). 
Hence $\Gamma_\nu$ is a smooth cut (Definition \ref{df:elmin}). Finally, we have that
\[
\frac{\partial V}{\partial\n_\gamma^\pm} = \delta\cp(\phi(\gamma),\T)\left(\frac{\partial}{\partial\n_K^\pm} V_{\overline\C\setminus K}^{\omega_{(\T,K)}}\right)|\phi^\prime|
\]
by \eqref{eq:dftV} and the conformality of $\phi$, where $\partial/\partial\n_\gamma^\pm$ and $\partial/\partial\n_K^\pm$ are the partial derivatives with respect to the one-sided normals at the smooth points of $\gamma$ and $K$, respectively. Thus, it holds that
\[
\frac{\partial V}{\partial\n_\gamma^-} = \frac{\partial V}{\partial\n_\gamma^+}
\]
on the open arcs constituting $\gamma$ since the corresponding property holds for $V_{\overline\C\setminus K}^{\omega_{(\T,K)}}$ by \eqref{eq:mincondcap}. As $\gamma$ was arbitrary continuum from $\Gamma_\nu$, we see that all the requirements of Definition~\ref{df:sym} are fulfilled. 
\end{proof}

To finish the proof of Theorem~\ref{thm:minset}, it only remains to show uniqueness of $\Gamma_\nu$, which is achieved through the following lemma:

\begin{lem}
\label{lem:1e}
$\Gamma_\nu$ is uniquely characterized as a compact set symmetric with respect to $\nu^*$.
\end{lem}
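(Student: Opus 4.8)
The statement to prove is Lemma~\ref{lem:1e}: the minimal set $\Gamma_\nu$ is \emph{uniquely} characterized among members of $\pk_f$ by the property of being symmetric with respect to $\nu^*$. By Lemma~\ref{lem:1sym} we already know $\Gamma_\nu$ is symmetric, so one direction (together with existence, from Lemma~\ref{lem:1b}) is in hand; the content here is that \emph{no other} set in $\pk_f$ can be symmetric with respect to $\nu^*$. The plan is to argue by contradiction: suppose $\Gamma_1,\Gamma_2\in\pk_f$ are both symmetric with respect to $\nu^*$ and $\Gamma_1\neq\Gamma_2$, and derive a contradiction via a variational (Dirichlet-integral) argument. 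The natural quantity to compare is the Green energy $I_\nu[\cdot]$ (equivalently, weighted capacity); since $\Gamma_\nu$ maximizes $I_\nu$, it would suffice to show that a symmetric set is a strict \emph{interior local maximum} of $I_\nu$ in a suitable sense, or better, that the difference $I_\nu[\Gamma_1]-I_\nu[\Gamma_2]$ has a definite sign governed by a Dirichlet integral that vanishes only when $\Gamma_1=\Gamma_2$.

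The key technical device should be the function $H_{\omega,\Gamma}=\partial_z V_{\overline\C\setminus\Gamma}^\omega$ from Proposition~\ref{prop:minset}, with $\omega=\nu^*$ (or its balayage $\widetilde\nu^*$ onto a circle $\T_\rho$ enclosing both $\Gamma_1,\Gamma_2$, to keep supports disjoint from the sets). The symmetry property \eqref{eq:GreenPotSym} says exactly that $H_{\nu^*,\Gamma_i}^2$ extends to a single-valued meromorphic function across the arcs of $\Gamma_i$, with controlled poles (simple, at $E_0\subseteq E_f$) and zeros (at the branching points $E_1$), and a double pole at $\infty$ coming from the mass of $\nu^*$. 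The standard trick (this is the Gonchar--Rakhmanov/Stahl uniqueness argument) is: form the ``mixed'' or ``difference'' function built from $H_{\nu^*,\Gamma_1}$ and $H_{\nu^*,\Gamma_2}$ — concretely, consider $V:=V_{\overline\C\setminus\Gamma_1}^{\widetilde\nu^*}-V_{\overline\C\setminus\Gamma_2}^{\widetilde\nu^*}$ and its Dirichlet integral over $\overline\C\setminus(\Gamma_1\cup\Gamma_2)$, or over the symmetric difference region. Using that each $V_{\overline\C\setminus\Gamma_i}^{\widetilde\nu^*}$ vanishes on $\Gamma_i$ and has the same logarithmic behavior $\sim -c(\nu)$-type constant structure driven by the common measure $\widetilde\nu^*$, integration by parts converts $\di(V,V)$ into boundary terms along $\Gamma_1\triangle\Gamma_2$; the symmetry property \eqref{eq:GreenPotSym} for \emph{each} $\Gamma_i$ forces the jump of $\partial V/\partial\n$ across the ``foreign'' arcs (arcs of $\Gamma_1$ not in $\Gamma_2$, and vice versa) to have a sign making every boundary contribution one-signed. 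One concludes $\di(V,V)\le 0$, hence $V\equiv$ const $\equiv 0$ (it is harmonic and vanishes on, say, $\Gamma_1$), which forces $\Gamma_1=\Gamma_2$ since each $\Gamma_i$ is precisely the zero set of the corresponding potential (regularity of the sets, from $\Gamma_i\in\pd_f$ up to polynomial hull, guarantees the potential actually vanishes exactly on $\Gamma_i$ and is positive off it).

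Carrying this out, the steps in order are: (1) reduce to $\widetilde\nu^*$ the balayage of $\nu^*$ onto a circle $\T_\rho$ containing $\Gamma_1\cup\Gamma_2$ in $\D_\rho$, noting $I_\nu[\Gamma_i]$ and $V_{\overline\C\setminus\Gamma_i}^{\nu^*}-V_{\overline\C\setminus\Gamma_i}^{\widetilde\nu^*}$ differ by a harmonic-off-$\T_\rho$ correction that is the same for both sets (so differences are unaffected); (2) set $V=V_{\overline\C\setminus\Gamma_1}^{\widetilde\nu^*}-V_{\overline\C\setminus\Gamma_2}^{\widetilde\nu^*}$, observe it is harmonic in $\overline\C\setminus(\Gamma_1\cup\Gamma_2\cup\T_\rho)$ and continuous across $\T_\rho$ (same density $\widetilde\nu^*$ on both sides cancels), bounded near $\infty$, with $V=-V_{\overline\C\setminus\Gamma_2}^{\widetilde\nu^*}\le 0$ on $\Gamma_1$ and $V=V_{\overline\C\setminus\Gamma_1}^{\widetilde\nu^*}\ge 0$ on $\Gamma_2$; (3) write $\di_\Omega(V,V)$ over $\Omega:=\overline\C\setminus(\Gamma_1\cup\Gamma_2)$ via Green's formula as a sum of boundary integrals $\int V\,\partial_\n V$ over the two-sided boundaries of the arcs of $\Gamma_1$ and of $\Gamma_2$; on arcs common to both, $V=0$; on an arc $\gamma\subset\Gamma_1\setminus\Gamma_2$, $V=-V_{\overline\C\setminus\Gamma_2}^{\widetilde\nu^*}$ is harmonic across $\gamma$ (since $\gamma\not\subset\Gamma_2$, $V_{\overline\C\setminus\Gamma_2}^{\widetilde\nu^*}$ is smooth there) while $V_{\overline\C\setminus\Gamma_1}^{\widetilde\nu^*}$ has, by symmetry \eqref{eq:GreenPotSym} for $\Gamma_1$, equal inner/outer normal derivatives, so the two-sided contribution is $\int_\gamma V\big(\partial_{\n^+}+\partial_{\n^-}\big)V$ with the $V_{\overline\C\setminus\Gamma_1}^{\widetilde\nu^*}$-part's normal derivatives \emph{adding} (both pointing out of $\Omega$) to $2\partial_{\n^+}V_{\overline\C\setminus\Gamma_1}^{\widetilde\nu^*}\le 0$ and $V=-V_{\overline\C\setminus\Gamma_2}^{\widetilde\nu^*}\le 0$ there, giving a nonnegative integrand — and symmetrically on arcs of $\Gamma_2\setminus\Gamma_1$ the signs again conspire; summing, $\di_\Omega(V,V)\ge 0$ with equality iff all foreign arcs are absent, i.e.\ $\Gamma_1=\Gamma_2$; but $\di_\Omega(V,V)\ge 0$ is automatic, so I need the sharper bookkeeping showing the total is $\le 0$ by accounting for the behavior at $\T_\rho$ and $\infty$ — here one uses that across $\T_\rho$, $V$ is $C^1$ (no contribution), and near $\infty$, $\nabla V\to 0$ fast enough (both potentials have the same $-1$ coefficient times $\log$ of the same total mass, canceling), killing the contribution at $\infty$; (4) conclude $V\equiv 0$, hence $V_{\overline\C\setminus\Gamma_1}^{\widetilde\nu^*}\equiv V_{\overline\C\setminus\Gamma_2}^{\widetilde\nu^*}$ on $\overline\C$, and since each potential vanishes exactly on the corresponding (regular) set and is strictly positive off it, $\Gamma_1=\Gamma_2$. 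Finally combine with Lemma~\ref{lem:1sym} and Lemma~\ref{lem:1b} to get the stated equivalence.

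The main obstacle I anticipate is getting the \emph{signs exactly right} in the boundary-term bookkeeping of step (3) — in particular, pinning down that on a foreign arc $\gamma\subset\Gamma_1\setminus\Gamma_2$ the quantity $V\cdot(\partial_{\n^+}+\partial_{\n^-})V$ is pointwise of one sign: this hinges on simultaneously using (a) $\partial_{\n^+}V_{\overline\C\setminus\Gamma_1}^{\widetilde\nu^*}=\partial_{\n^-}V_{\overline\C\setminus\Gamma_1}^{\widetilde\nu^*}$ from symmetry of $\Gamma_1$, (b) the sign $V_{\overline\C\setminus\Gamma_1}^{\widetilde\nu^*}>0$ off $\Gamma_1$ (so $V>0$... no, $V=-V_{\overline\C\setminus\Gamma_2}^{\widetilde\nu^*}<0$ on $\gamma$, need care), and (c) the fact that the normal derivative of a Green potential from its \emph{support side} has a definite sign. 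One must also handle regularity at the finitely many singular points $E_0\cup E_1$ (where arcs meet or terminate), but these form a polar set and contribute nothing to the Dirichlet integral, so a standard exhaustion/capacity argument disposes of them. The cleanest packaging is probably to run the whole computation with the holomorphic functions $H_{\widetilde\nu^*,\Gamma_i}$ and the $1$-form $H\,dz$, where the arc-conditions $H^+=-H^-$ make $(H_{\widetilde\nu^*,\Gamma_1}^2-H_{\widetilde\nu^*,\Gamma_2}^2)$-type expressions single-valued, reducing the sign analysis to residue computations at $E_f$ and $\infty$; I expect the authors to phrase it via Dirichlet integrals as signposted in the introduction, so I would follow that route.
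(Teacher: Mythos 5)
There is a genuine gap at the heart of your step (3), and you flag it yourself without resolving it. Your plan is to compare two symmetric sets $\Gamma_1\neq\Gamma_2$ directly by computing the Dirichlet integral of $V:=V_{\overline\C\setminus\Gamma_1}^{\widetilde\nu^*}-V_{\overline\C\setminus\Gamma_2}^{\widetilde\nu^*}$ over $\Omega=\overline\C\setminus(\Gamma_1\cup\Gamma_2)$. But the boundary bookkeeping you describe only ever produces $\di_\Omega(V,V)\geq0$: on a foreign arc of $\Gamma_1$ one has $V=-V_{\overline\C\setminus\Gamma_2}^{\widetilde\nu^*}\leq0$ while the two-sided normal derivatives of $V$ sum to $2\,\partial_{\n^+}V_{\overline\C\setminus\Gamma_1}^{\widetilde\nu^*}\geq0$, so each contribution to $-\frac{1}{2\pi}\int V\,\partial_\n V\,ds$ is nonnegative, and symmetrically for $\Gamma_2$. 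Since $\di_\Omega(V,V)=\iint\|\nabla V\|^2\,dm_2\geq0$ is automatic, this is vacuous. The reverse inequality $\di_\Omega(V,V)\leq0$ that you say you ``need'' cannot come from the behavior at $\T_\rho$ or at $\infty$: $V$ is harmonic across $\T_\rho$ (the distributional Laplacians $-2\pi\widetilde\nu^*$ cancel) and bounded at $\infty$, so those contributions simply vanish and the identity closes without forcing anything. Symmetry of the two sets alone, fed into this computation, does not yield a contradiction; the extremality of $\Gamma_\nu$ must enter the estimate, not merely be quoted at the end.

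This is exactly where the paper's argument differs. It does not compare two symmetric sets; it compares an arbitrary symmetric $\Gamma_s$ with the minimal set $\Gamma_\nu$ (already known to exist and to be symmetric), writes $I_\nu[\Gamma_s]-I_\nu[\Gamma_\nu]=\di_{D_s}(V_s)-\di_{D_\nu}(V_\nu)$ via \eqref{eq:weightgreen} and \eqref{eq:di1}, and then performs a careful domain surgery (the sets $\Gamma_s^1,\dots,\Gamma_s^4$, built after establishing \eqref{commonpoint}--\eqref{Gammas4}) so that on the common open set $D$ one may replace $V_s$ by the sign-flipped extension $\widetilde V$, which is harmonic across $\Gamma_s^4$ precisely because of the symmetry of $\Gamma_s$. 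Setting $h=\widetilde V-V_\nu$, the cross term $\di_D(V_\nu,h)$ is shown to be nonnegative by a one-signed boundary integral over $\Gamma_s^2\cup\Gamma_s^3$, and the strict positivity comes from $\di_D(h)>0$ for the non-constant harmonic function $h$. The conclusion $I_\nu[\Gamma_s]>I_\nu[\Gamma_\nu]$ then contradicts the minimality of $\cp_\nu(\Gamma_\nu)$. To repair your proof you would need to abandon the symmetric two-set comparison and reintroduce the extremal set on one side of the inequality; the geometric preliminaries you gloss over (which arcs and endpoints of $\Gamma_s$ can meet $\overline\Omega$, handled in the paper via the non-vanishing of the jump of $f$) are also essential to make the surgery and the sign analysis legitimate.
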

\begin{proof}
Let $\Gamma_s\in\pd_f$ be symmetric with respect to $\nu^*$ and $\Gamma_\nu$ be any set of minimal capacity for Problem $(f,\nu)$. Such a set exists by Lemma \ref{lem:1b} and it is symmetric by Lemma \ref{lem:1sym}.
Suppose to the contrary that $\Gamma_s\neq\Gamma_\nu$, that is,
\begin{equation}
\label{assumption}
\Gamma_s\cap (\C\setminus \Gamma_\nu)\neq\varnothing
\end{equation}
($\Gamma_s$ cannot be a strict subset of $\Gamma_\nu$ for it would have strictly smaller $\nu$-capacity as pointed out in the proof of Lemma \ref{lem:1sym}). We want to show that \eqref{assumption} leads to
\begin{equation}
\label{contradiction}
I_\nu[\Gamma_s] - I_\nu[\Gamma_\nu] > 0.
\end{equation}
Clearly, \eqref{contradiction} is impossible by the very definition of  $\Gamma_\nu$ and therefore the lemma will be proven.

By the very definition of symmetry (Definition \ref{df:sym}), $\Gamma_\nu$ and $\Gamma_s$ are smooth cuts for $f$. In particular, $\overline{C}\setminus\Gamma_\nu$, $\overline{C}\setminus\Gamma_s$ are connected and we have a decomposition of the form
\[
\Gamma_s = E_0^s\cup E_1^s \cup \bigcup \gamma_j^s \quad \mbox{and} \quad \Gamma_\nu = E_0^\nu\cup E_1^\nu \cup \bigcup \gamma_j^\nu,
\]
where $E_0^s, E_0^\nu\subseteq E_f$, $\gamma_j^\nu,\gamma_j^s$ are open analytic arcs, and each element of $E_0^s, E_0^\nu$ is an endpoint of exactly one arc from $\bigcup \gamma_j^s$, $\bigcup \gamma_j^\nu$ while $E_1^s,E_1^\nu$ are finite sets of points each elements of which serving as an endpoint for at least three arcs from $\bigcup \gamma_j^s$, $\bigcup \gamma_j^\nu$, respectively. Moreover, the continuations of $f$ from infinity that are meromorphic outside of $\Gamma_s$ and $\Gamma_\nu$, say $f_s$ and $f_\nu$, are such that the jumps $f_s^+-f_s^-$ and $f_\nu^+-f_\nu^-$ do not vanish on any subset with a limit point of $\bigcup\gamma_j^s$ and $\bigcup\gamma_j^\nu$, respectively. Note that $\Gamma_s\cap\Gamma_\nu\neq\varnothing$ otherwise $\overline{\C}\setminus(\Gamma_\nu\cup\Gamma_s)$ would be connected, so $f$ could be continued analytically over $(\overline{C}\setminus\Gamma_\nu)\cup (\overline{C}\setminus\Gamma_s) =\overline{\C}$ and it would be identically zero by our normalization.

Write $\Gamma_s =\Gamma_s^1\cup\Gamma_s^2$ and $\Gamma_\nu =\Gamma_\nu^1\cup\Gamma_\nu^2$, where $\Gamma_s^k$  (resp. $\Gamma_\nu^k$) are compact disjoint sets such that each connected component of $\Gamma_s^1$ (resp. $\Gamma_\nu^1$) has nonempty intersection with $\Gamma_\nu$ (resp. $\Gamma_s$) while $\Gamma_s^2\cap\Gamma_\nu=\Gamma_\nu^2\cap\Gamma_s=\varnothing$. 

Now, put, for brevity, $D_\nu:=\overline\C\setminus\Gamma_\nu$ and $D_s:=\overline\C\setminus\Gamma_s$. Denote further by $\Omega$ the unbounded component of $D_\nu\cap D_s$. Then
\begin{equation}
\label{commonpoint}
\overline{\Omega}\cap E_0^s \cap \Gamma_s^1 = \overline{\Omega}\cap E_0^\nu \cap \Gamma_\nu^1.
\end{equation}
Indeed, assume that there exists $e\in(\overline{\Omega}\cap E_0^s \cap \Gamma_s^1)\setminus(E_0^\nu \cap \Gamma_\nu^1)$ and let $\gamma^s_e$ be the arc in the union $\bigcup\gamma_j^s$ that has $e$ as one of the endpoints. By our assumption there is an open  disk $W$ centered at $e$ such that $W\cap\Gamma_s=\{e\}\cup (W\cap \gamma^s_e)$ and $W\cap\Gamma_\nu=\varnothing$. Thus $W\setminus(\{e\}\cup\gamma^s_e)\subset D_\nu\cap D_s$. Anticipating the proof of Proposition~\ref{prop:minset} in Section~\ref{ss:prop} (which is independent of the present proof), $\gamma^s_e$ has well-defined tangent at $e$ so we can shrink $W$ to ensure that $\partial W\cap\gamma^s_e$ is a single point. Then $W\setminus(\{e\}\cup\gamma^s_e)$ is connected hence contained in a 
single connected component of $D_\nu\cap D_s$ which is necessarily $\Omega$ since $e\in\overline{\Omega}$. As $f_s$ and $f_\nu$ coincide on $\Omega$ and $f_\nu$ is meromorphic in $W$, $f_s$ has identically zero jump on $\gamma^s_e\cap W$ which is impossible by the definition of a smooth cut. Consequently the left hand side of \eqref{commonpoint} is included in the right hand side and the opposite inclusion can be shown similarly.

\begin{figure}[h!]
\centering
\includegraphics[scale=.6]{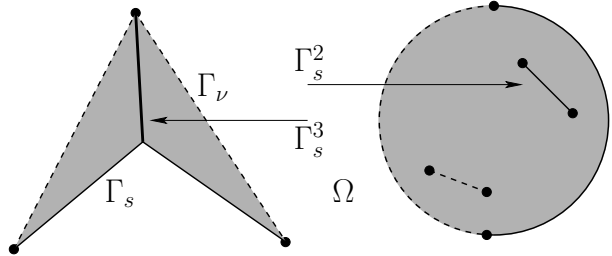}
\caption{\small A particular example of $\Gamma_s$ (solid lines) and $\Gamma_\nu$ (dashed lines). Black dots represent branch points (black dots within big gray disk are branch point $f$ that lie on other sheets of the Riemann surface than the one we fixed). The white area on the figure represents domain $\Omega$.}
\label{fig:TwoCuts}
\end{figure}

Next, observe that
\begin{equation}
\label{Gammas2}
\Gamma_s^2\cap\overline\Omega = \varnothing{\color{red}.}
\end{equation}
Indeed, since $\partial\Omega\subset\Gamma_s\cup\Gamma_\nu$ and $\Gamma_s^2$, $\Gamma^1_s\cup\Gamma_\nu$ are disjoint compact sets, a connected component of $\partial\Omega$ that meets $\Gamma_s^2$ is contained in it. If $z\in\Gamma_s^2\cap\partial\Omega$ lies on $\gamma_j^s$, then by analyticity of the latter each sufficiently small disk $D_z$ centered at $z$ is cut out by $\gamma_j^s\cap D_z$ into two connected components included in $D_\nu\cap D_s$, and of necessity one of them is contained in $\Omega$. Hence $\gamma_j^s\cap D_z$ is contained in $\partial\Omega$, and in turn so does the entire arc $\gamma_j^s$ by connectedness. Hence every component of $\Gamma_s^2\cap\partial\Omega$ consists of a union of arcs $\gamma_j^s$ connecting at their endpoints. Because $\Gamma_s^2$ has no loop, one of them has an endpoint $z_1\in E_0^s\cup E_1^s$ belonging  to no other arc. If $z_1\in E_0^s$, reasoning as we did to prove \eqref{commonpoint} leads to the absurd conclusion that $f_s$ has zero jump across the initial arc. If $z_1\in E_1^s$, anticipating the proof of Proposition~\ref{prop:minset} once again, each sufficiently small disk $D_{z_1}$ centered at $z_1$ is cut out by $\Gamma_s^2\cap D_{z_1}$ into curvilinear sectors included in $D_\nu\cap D_s$, and of necessity one of them is contained in $\Omega$ whence at least two adjacent arcs $\gamma_j^s$ emanating from $z_1$ are included in  $\partial\Omega$. This contradicts the fact that $z_1$ belongs to exactly one arc of the hypothesized component of $\Gamma_s^2\cap\partial\Omega$, and proves \eqref{Gammas2}.

Finally, set
\[
\Gamma_s^3 := \left[\Gamma_s^1\setminus\left(\partial\Omega\setminus E_1^s\right)\right]\cap D_\nu \quad \mbox{and} \quad \Gamma_s^4 := \left[\Gamma_s^1\cap\bigcup\gamma_j^s\right]\cap\partial\Omega\cap D_\nu.
\]
Clearly
\begin{equation}
\label{Gammas3}
\left(\Gamma_s^3\setminus E_1^s\right)\cap\overline\Omega = \varnothing.
\end{equation}
Moreover, observing that any two arcs $\gamma_j^s$, $\gamma_k^\nu$ either coincide or meet in a (possibly empty) discrete set and arguing as we did to prove \eqref{Gammas2}, we see that $\left[\Gamma_s^1\cap\bigcup\gamma_j^s\right]\cap\partial\Omega$ consists of subarcs of arcs $\gamma_j^s$ whose endpoints either belong
to some intersection $\gamma_j^s\cap\gamma_k^\nu$ (in which case they contain this endpoint) or else lie in $E_0^s\cup E_1^s$ (in which case they do not contain this endpoint). Thus $\Gamma_s^4$ is comprised of  open analytic arcs $\widetilde{\gamma}_\ell^s$ contained in $\partial\Omega\cap\bigcup\gamma_j^s$ and disjoint from $\Gamma_\nu$. Hence for any $z\in\Gamma_s^4$, say $z\in \widetilde{\gamma}_\ell^s$, and any disk $D_z$ centered at $z$ of small enough radius it holds that $D_z\cap\partial\Omega=D_z\cap\widetilde{\gamma}_\ell^s$ and that $D_z\setminus\widetilde{\gamma}_\ell^s$ has exactly two connected components:
\begin{equation}
\label{Gammas4}
D_z\cap\Omega \neq \varnothing \quad \mbox{and} \quad D_z\cap\left(\overline\C\setminus\overline\Omega\right)\neq\varnothing
\end{equation}
for if $z\in \widetilde{\gamma}_\ell^s$ was such that $D_z\setminus\widetilde{\gamma}_\ell^s\subset\Omega$, the jump of $f_s$ across $\widetilde{\gamma}_\ell^s$ would be zero as the jump of $f_\nu$ is zero there and $f_s=f_\nu$ in $\Omega$ (see Figure~\ref{fig:TwoCuts}).

As usual, denote by $\widetilde\nu^*$ the balayage of $\nu^*$ onto $\T_\rho$ with $\rho \in (r, 1)$ but large enough so that $\Gamma_s$ and $\Gamma_\nu$ are contained in the interior of $\D_\rho$ (see Lemma~\ref{lem:1b} for the definition of $r$). Then, according to \eqref{eq:weightgreen} and \eqref{eq:di1}, it holds that
\begin{equation}
\label{eq:apprstep2}
I_\nu[\Gamma_s]-I_\nu[\Gamma_\nu] = I_{D_s}[\widetilde\nu^*] - I_{D_\nu}[\widetilde\nu^*] = \di_{D_s}(V_s) - \di_{D_\nu}(V_\nu),
\end{equation}
where $V_s:=V_{D_s}^{\widetilde\nu^*}$ and $V_\nu:=V_{D_\nu}^{\widetilde\nu^*}$. Indeed, as $\widetilde\nu^*$ has finite energy (see Section~\ref{BP}),  the Dirichlet integrals of $V_s$ and $V_\nu$ in the considered domains (see Section~\ref{DI}) are well-defined by Proposition~\ref{prop:minset}, which is proven later but independently of the results in this section.

Set $D:=D_\nu\setminus(\Gamma_s^2\cup\Gamma_s^3)$. Since $\left[\Gamma_s^1\setminus\left(\partial\Omega\setminus E_1^s\right)\right]$ consists  of piecewise smooth arcs in $\Gamma_s^1$ whose endpoints either belong to this arc (if they lie in $E_1^s$), or to $E_0^s\cap\Gamma_s^1$ (hence also to $\Gamma_\nu$ by \eqref{commonpoint}), or else to some intersection $\gamma_j^s\cap\gamma_k^\nu$ (in which case they belong to $\Gamma_\nu$ again), we see that $D$ is an open set. As $V_\nu$ is harmonic across $\Gamma_s^2\cup\Gamma_s^3$ and $V_s$ is harmonic across $\Gamma_\nu\setminus\Gamma_s$, we get from \eqref{ajoutbord} that
\begin{equation}
\label{Dirichlet1}
\di_{D_\nu}(V_\nu) = \di_D(V_\nu) \quad \mbox{and} \quad \di_{D_s}(V_s) = \di_{D\setminus\Gamma_s^4}(V_s)
\end{equation}
since $D_s\setminus\Gamma_\nu=D_\nu\setminus\Gamma_s=D\setminus\Gamma_s^4$, by inspection on using \eqref{commonpoint}.

Now, recall that $\Gamma_s$ has no interior and $V_s\equiv0$ on $\Gamma_s$, that is, $V_s$ is defined in the whole complex plane. So, we can define a function on $\C$ by putting
\begin{equation}
\label{defVt}
\widetilde V := \left\{
\begin{array}{rl}
V_s, & \mbox{in} \quad \Omega, \smallskip \\
-V_s, & \mbox{otherwise}.
\end{array}
\right.
\end{equation}
We claim that $\widetilde V$ is superharmonic in $D$ and harmonic in $D\setminus\T_\rho$. Indeed, it is clearly harmonic in $D\setminus(\Gamma_s^4\cup\T_\rho)=(D_s\cap D_\nu)\setminus\T_\rho$and superharmonic in a neighborhood of $\T_\rho\subset\Omega$ where its weak Laplacian is $-2\pi\widetilde\nu^*$ which is a negative measure. Moreover, $\Gamma_s^4$ is a collection of open analytic arcs such that $\partial V_s/\partial\n^+=\partial V_s/\partial\n^-$ by the symmetry of $\Gamma_s$, where $\n^\pm$ are the two-sided normal on each subarc of $\Gamma_s^4$. The equality of the normals means that $V_s$ can be continued harmonically across each subarc of $\Gamma_s^4$ by $-V_s$. Hence, \eqref{Gammas4} and the definition of $\widetilde V$ yield that it is harmonic across $\Gamma_s^4$ thereby proving the claim. Thus, using \eqref{decompositivity} (applied with $D^\prime=\Omega$) and \eqref{ajoutbord}, we obtain
\begin{equation}
\label{Dirichlet2}
\di_{D\setminus\Gamma_s^4}(V_s) = \di_{D\setminus\Gamma_s^4}(\widetilde V) = \di_D(\widetilde V)
\end{equation}
hence combining \eqref{eq:apprstep2}, \eqref{Dirichlet1}, and \eqref{Dirichlet2}, we see that
\begin{equation}
\label{contr1}
I_\nu[\Gamma_s]-I_\nu[\Gamma_\nu] = \di_D(\widetilde V) - \di_D(V_\nu).
\end{equation}

By the first claim in Section~\ref{ss:gp}, it holds that $h:=\widetilde V-V_\nu$ is harmonic in $D$. Observe that $h$ is not a constant function, for it tends to zero at each point of $\Gamma_s\cap\Gamma_\nu\subset\partial D$ whereas it tends to a strictly negative value at each point of $\Gamma_s\cap D_\nu\subset\overline{D}$ which is
nonempty by \eqref{assumption}. Then
\begin{equation}
\label{contr2}
\di_D(\widetilde V) = \di_D(V_\nu) + \di_D(h) + 2\di_D(V_\nu,h).
\end{equation}
Now, $V_\nu\equiv0$ on $\Gamma_\nu$ and it is harmonic across $\Gamma_s^2\cup\Gamma_s^3$, hence 
\[
\frac{\partial h}{\partial \n^+}+\frac{\partial h}{\partial \n^-} = \frac{\partial {\widetilde V}}{\partial \n^+}+\frac{\partial {\widetilde V}}{\partial \n^-} \quad \mbox{on} \quad \Gamma_s^2\cup\Gamma_s^3.
\]
Consequently, we get from \eqref{eq:greenformula}, since $\widetilde V=-V_s$ in the neighborhood of $\Gamma_s^2\cup\Gamma_s^3$ by \eqref{Gammas2} and \eqref{Gammas3}, that
\begin{equation}
\label{contr3}
\di_D(V_\nu,h) = -\int_{\Gamma_s^2\cup\Gamma_s^3}V_\nu\left(\frac{\partial \widetilde V}{\partial \n^+}+\frac{\partial \widetilde V}{\partial \n^-}\right)\frac{ds}{2\pi} = \int_{\Gamma_s^2\cup\Gamma_s^3}V_\nu\left(\frac{\partial V_s}{\partial \n^+}+\frac{\partial V_s}{\partial \n^-}\right)\frac{ds}{2\pi}\geq0
\end{equation}
because $V_\nu$ is nonnegative  while $\partial V_s/\partial n^+$, $\partial V_s/\partial n^-$ are also nonnegative on
$\Gamma_s^2\cup\Gamma_s^3$ as $V_s\geq0$ vanishes there. Altogether, we obtain from \eqref{contr1}, \eqref{contr2}, and \eqref{contr3} that
\[
I_\nu[\Gamma_s]-I_\nu[\Gamma_\nu] \geq \di_D(h)  > 0
\]
by \eqref{positivity} and since $h=\tilde V -V_\nu$ is a non-constant harmonic function in $D$. This shows \eqref{contradiction} and finishes the proof of the lemma.
\end{proof}

\subsection{Proof of Proposition~\ref{prop:minset}}
\label{ss:prop}

It is well known that $H_{\omega,\Gamma}$ is holomorphic in the domain of harmonicity of $V_{\overline\C\setminus\Gamma}^\omega$, that is, in $\overline\C\setminus(\Gamma\cup\supp(\omega))$. It is also clear that 
$H_{\omega,\Gamma}^\pm$ exist smoothly on each $\gamma_j$ since $V_{\overline\C\setminus\Gamma}^\omega$ can be harmonically continued across each side of $\gamma_j$.

Denote by $\n_t^\pm$ the one-sided unit normals at $t\in\bigcup\gamma_j$ and by $\tau_t$ the unit tangent pointing in the positive direction. Let further $n^\pm(t)$ be the unimodular complex numbers corresponding to vectors $\n^\pm_t$. Then the complex number corresponding to $\tau_t$ is $\mp in^\pm(t)$ and it can be readily verified that
\[
\frac{\partial V_{\overline\C\setminus\Gamma}^\omega}{\partial\n^\pm_t} = 2\re\left(n^\pm(t)H_{\omega,\Gamma}^\pm(t)\right) \quad \mbox{and} \quad \frac{\partial \left(V_{\overline\C\setminus\Gamma}^\omega\right)^\pm}{\partial\tau_t} = \mp2\im\left(n^\pm(t)H_{\omega,\Gamma}^\pm(t)\right).
\]
As $\left(V_{\overline\C\setminus\Gamma}^\omega\right)^\pm\equiv0$ on $\Gamma$, the tangential derivatives above are identically zero, therefore $n^\pm H_{\omega,\Gamma}^\pm$ is real on $\Gamma$. Moreover since $n^+=-n^-$ and by the symmetry property \eqref{eq:GreenPotSym}, it holds that $H_{\omega,\Gamma}^+=-H_{\omega,\Gamma}^-$ on $\bigcup\gamma_j$. Hence, $H_{\omega,\Gamma}^2$ is holomorphic in $\overline\C\setminus(E_0\cup E_1\cup\supp(\omega))$. Since $E_0\cup E_1$ consists of isolated points around which $H_{\omega,\Gamma}^2$ is holomorphic each $e\in E_0\cup E_1$ is either a pole, a removable singularity, or an essential one. As $H_{\omega,\Gamma}$ is holomorphic on a two-sheeted Riemann surface above the point, it cannot have an essential singularity since its primitive has bounded real part $\pm V_{\overline\C\setminus\Gamma}^\omega$. Now, by repeating the arguments in \cite[Sec. 8.2]{Pommerenke2}, we deduce that $(z-e)^{j_e-2}H^2_{\omega,\Gamma}(z)$ is holomorphic and non-vanishing in some neighborhood of $e$ where $j_e$ is the number of arcs $\gamma_j$ having $e$ as an endpoint, that the tangents at $e$ to these arcs exist, and that they are equiangular if $j_e>1$.

\subsection{Proof of Theorem \ref{thm:mpa}}
\label{ss:52}

The following theorem \cite[Thm. 3]{GRakh87} and its proof are essential in establishing Theorem~\ref{thm:mpa}. Before stating this result, we remind the reader that a polynomial $v$ is said to by spherically normalized if it has the form
\begin{equation}
\label{eq:sphnp}
v(z) = \prod_{v(e)=0,~|e|\leq1}(z-e)\prod_{v(e)=0,~|e|>1}(1-z/e).
\end{equation}
We also recall from  \cite{GRakh87} the notions of a \emph{tame set} and a \emph{tame point} of a set. A point $z$ belonging to a compact set $\Gamma$ is called tame, if there is a disk centered at $z$ whose intersection with $\Gamma$ is an analytic arc. A compact set $\Gamma$ is called tame, if $\Gamma$ is non-polar and quasi-every point of $\Gamma$ is tame. 

A tame compact set $\Gamma$ is said to have the S-property in the field $\psi$, assumed to be harmonic in some neighborhood of $\Gamma$, if $\supp(\omega_{\Gamma,\psi})$ forms a tame set as well, every tame point of $\supp(\omega_{\Gamma,\psi})$ is also a tame point of $\Gamma$, and the equality in \eqref{eq:sproperty} holds at each tame point of $\supp(\omega_{\Gamma,\psi})$.

Whenever the tame compact set $\Gamma$ has connected complement in a simply connected region $G\supset\Gamma$ and  $g$ is holomorphic in $G\setminus\Gamma$, we write $\oint_\Gamma g(t)\,dt$ for the contour integral of $g$ over some (hence any) system of curves encompassing $\Gamma$ once in $G$ in the positive direction. Likewise, the Cauchy integral $\oint_\Gamma g(t)/(z-t)\,dt$ can be defined at any $z\in\overline{\C}\setminus\Gamma$ by choosing the previous system of curves in such a way that it separates $z$ from $\Gamma$.

If $g$ has limits from each side at tame points of $\Gamma$, and if these limits  are integrable with respect to linear measure on $\Gamma$, then the previous integrals may well be rewritten as integrals on $\Gamma$ with $g$ replaced by its jump across $\Gamma$. However, this is \emph{not} what is meant by the notation $\oint_\Gamma$.

\begin{gonchar}
\label{thm:GR}
Let $G\subset\D$ be a simply connected domain and $\Gamma\subset G$ be a tame compact set with connected complement. Let also $g$ be holomorphic in $G\setminus\Gamma$ and have continuous limits on $\Gamma$ from each side in the neighborhood of every tame point, whose jump across $\Gamma$ is non-vanishing q.e. Further, let $\{\Psi_n\}$ be a sequence of functions that satisfy:
\begin{enumerate}
\item $\Psi_n$ is holomorphic in $G$ and $-\frac{1}{2n}\log|\Psi_n| \to\psi$ locally uniformly there, where $\psi$ is harmonic in $G$;
\item $\Gamma$ possesses the S-property in the field $\psi$ \textnormal{(}see \eqref{eq:sproperty}\textnormal{)}.
\end{enumerate}
Then, if the polynomials $q_n$, $\deg(q_n)\leq n$, satisfy the orthogonality relations\footnote{Note that the orthogonality in \eqref{eq:orthorel} is non-Hermitian, that is, no conjugation is involved.}
\begin{equation}
\label{eq:orthorel}
\oint_\Gamma q_n(t)l_{n-1}(t)\Psi_n(t)g(t)dt = 0, \quad \mbox{for any} \quad l_{n-1}\in\poly_{n-1},
\end{equation}
then $\mu_n\cws\omega_{\Gamma,\psi}$, where $\mu_n$ is the normalized counting measure of zeros of $q_n$. Moreover, if the polynomials $q_n$ are spherically normalized, it holds that
\begin{equation}
\label{eq:funan1}
\left|A_n(z)\right|^{1/2n} \cic \exp\{-c(\psi;\Gamma)\} \quad \mbox{in} \quad \overline\C\setminus\Gamma,
\end{equation}
where $c(\psi;\Gamma)$ is the modified Robin constant \textnormal{(}Section~\ref{sss:wc}\textnormal{)}, and 
\begin{equation}
\label{eq:funan2}
A_n(z) := \oint_\Gamma q_n^2(t)\frac{(\Psi_ng)(t)dt}{z-t} = \frac{q_n(z)}{l_n(z)}\oint_\Gamma (l_nq_n)(t)\frac{(\Psi_ng)(t)dt}{z-t},
\end{equation}
where $l_n$ can be any\footnote{The fact that we can pick an arbitrary polynomial $l_n$ for this integral representation of $A_n$ is a simple consequence of orthogonality relations \eqref{eq:orthorel}.} nonzero polynomial of degree at most $n$.
\end{gonchar}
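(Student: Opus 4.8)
The statement is the central convergence theorem of the Gonchar--Rakhmanov--Stahl circle of ideas for non-Hermitian orthogonal polynomials possessing the $S$-property, and I would split the proof into two parts: first the weak$^*$ convergence $\mu_n\cws\omega_{\Gamma,\psi}$, and then the $n$-th root asymptotics of $A_n$, the latter being deduced essentially from the former. On the algebraic side, observe first that \eqref{eq:orthorel} amounts to $n$ linear conditions on the $n+1$ coefficients of a polynomial of degree at most $n$, so a nonzero $q_n$ exists and may be taken spherically normalized; I would keep at hand the Cauchy transform $R_n(z):=\oint_\Gamma q_n(t)(\Psi_ng)(t)\,dt/(z-t)$, which by \eqref{eq:orthorel} vanishes at $\infty$ to order at least $n+1$, together with both representations of $A_n$ in \eqref{eq:funan2}; the second one, valid for any nonzero $l_n\in\poly_n$, is obtained from \eqref{eq:orthorel} by writing $q_n(z)l_n(t)/l_n(z)-q_n(t)$ as $(z-t)$ times a polynomial of degree $\le n-1$ in $t$. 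These identities are the backbone of the argument.

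For the weak$^*$ convergence, the first step is to localize the zeros of $q_n$: the zeros stay in a fixed compact set and, for every neighborhood $U$ of $\Gamma$, all but $o(n)$ of them lie in $U$. If this failed, a fixed positive proportion of the zeros would stay at distance $\ge\epsilon$ from $\Gamma$, and one could build polynomials $l_{n-1}\in\poly_{n-1}$ vanishing at those exterior zeros and concentrated near $\Gamma$ so as to contradict \eqref{eq:orthorel}; this uses that $\overline\C\setminus\Gamma$ is connected and that the jump of $g$ across $\Gamma$ is non-vanishing q.e., so that the interpolation forced by \eqref{eq:orthorel} is genuinely constraining. Hence every weak$^*$ limit point $\mu$ of $\{\mu_n\}$ is a probability measure carried by $\Gamma$, and such a limit point exists since the zeros stay bounded; it then suffices to identify $\mu$ with $\omega_{\Gamma,\psi}$. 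The upper Frostman inequality $V^\mu+\psi\le c(\psi;\Gamma)$ q.e.\ on $\Gamma$ I would obtain from a maximum-principle argument in $G\setminus\Gamma$ applied to $-\tfrac1{2n}\log|q_n^2\Psi_n|$, using that $A_n$ is ``small'' --- bounded above on compacts of $\overline\C\setminus\Gamma$ by deforming the contour in \eqref{eq:funan2} onto a level line close to $\Gamma$ --- together with the uniform convergence $-\tfrac1{2n}\log|\Psi_n|\to\psi$.

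The reverse inequality $V^\mu+\psi\ge c(\psi;\Gamma)$ on $\supp\mu$ is where the $S$-property and the non-vanishing of the jump of $g$ are indispensable. The $S$-property \eqref{eq:sproperty} says the two one-sided normal derivatives of $V^{\omega_{\Gamma,\psi}}+\psi$ coincide on the arcs, which is precisely the variational (Euler--Lagrange) condition for the weighted equilibrium problem, and it forces $|q_n(t)|^2|\Psi_n(t)|$ not to be too small q.e.\ on $\supp\omega_{\Gamma,\psi}$: otherwise the integrand $q_nl_{n-1}\Psi_ng$ in \eqref{eq:orthorel}, whose jump across $\Gamma$ is non-vanishing, could not oscillate enough to produce the cancellation required for every $l_{n-1}$. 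With both inequalities in hand, uniqueness of the weighted equilibrium measure gives $\mu=\omega_{\Gamma,\psi}$, and since the limit point was arbitrary, $\mu_n\cws\omega_{\Gamma,\psi}$. Granted this, the asymptotics \eqref{eq:funan1} follow by combining $-\tfrac1{2n}\log|q_n^2|\to V^{\omega_{\Gamma,\psi}}$ (principle of descent, plus convergence in capacity off the zeros) with $-\tfrac1{2n}\log|\Psi_n|\to\psi$, so that $-\tfrac1{2n}\log|q_n^2\Psi_ng|\to V^{\omega_{\Gamma,\psi}}+\psi$, which equals $c(\psi;\Gamma)$ on $\supp\omega_{\Gamma,\psi}$; inserting this into the first representation in \eqref{eq:funan2} and deforming the contour onto level lines accumulating on $\Gamma$ yields $\limsup_n|A_n(z)|^{1/2n}\le\exp\{-c(\psi;\Gamma)\}$ uniformly on compacts of $\overline\C\setminus\Gamma$, while the matching lower bound --- valid only in capacity --- uses the second representation in \eqref{eq:funan2} with $l_n$ chosen so that $l_nq_n$ has prescribed zeros at tame points, together once more with the non-vanishing jump of $g$, to keep $A_n$ from being small outside a set of vanishing capacity.

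The main obstacle is exactly the reverse potential inequality in the identification of $\mu$ --- the assertion that $|q_n|^2|\Psi_n|$ stays suitably large q.e.\ on $\supp\omega_{\Gamma,\psi}$ --- which is the technical heart of the $S$-property method and the point where all three hypotheses (the $S$-property of $\Gamma$, the non-vanishing jump of $g$, and the asymptotics of $\Psi_n$) are used simultaneously. By comparison, the localization of the zeros of $q_n$ and the capacity bookkeeping for the lower bound in \eqref{eq:funan1} are comparatively routine once that step is secured.
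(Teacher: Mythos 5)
First, a point of reference: the paper does not prove this statement. Theorem GR is quoted from Gonchar and Rakhmanov \cite[Thm.~3]{GRakh87}, and the paper only revisits two isolated features of that proof later on (in the proof of Theorem~\ref{thm:mpa}) to check that the argument tolerates test polynomials of degree $n-m$ rather than $n$. So your proposal can only be measured against the original argument, not against anything contained here. Measured that way, your outline does reproduce the correct architecture: existence of $q_n$ by linear algebra, the derivation of the second representation in \eqref{eq:funan2} from \eqref{eq:orthorel}, reduction to identifying each weak$^*$ limit point $\mu$ of $\{\mu_n\}$ with $\omega_{\Gamma,\psi}$ by potential-theoretic inequalities, upper estimates by contour deformation plus the principle of descent, and the passage from the weak$^*$ limit to \eqref{eq:funan1}.

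The genuine gap sits exactly at the step you yourself call the heart of the matter. Your justification of the lower bound --- that otherwise the integrand $q_nl_{n-1}\Psi_ng$ ``could not oscillate enough to produce the cancellation required for every $l_{n-1}$'' --- is not an argument; it restates the conclusion. The actual mechanism is specific: because the one-sided normal derivatives of $V^{\omega_{\Gamma,\psi}}+\psi$ match on the arcs while the tangential derivatives vanish there, the function $V^{\omega_{\Gamma,\psi}}+\psi-c(\psi;\Gamma)$ continues harmonically across each arc of $\supp(\omega_{\Gamma,\psi})$ to \emph{minus} itself, so that $\exp\{-2n(V^{\omega_{\Gamma,\psi}}+\psi-c(\psi;\Gamma))\}$ becomes the modulus of an analytic comparison function on a two-sheeted neighborhood of the support. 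One then compares $q_n^2\Psi_ng$ with this function and runs a zero-counting (argument-principle) estimate: if $A_n$, or its residue $\tau_n=\oint_\Gamma q_n^2\Psi_ng\,dt$, decayed faster than $e^{-2nc(\psi;\Gamma)}$ by a fixed margin, $q_n^2\Psi_ng$ would acquire more than $2n+o(n)$ zeros near $\supp(\omega_{\Gamma,\psi})$, contradicting $\deg q_n\le n$; the q.e.\ non-vanishing of the jump of $g$ is what forbids attributing the decay to $g$. Nothing in your sketch supplies this continuation-across-$\Gamma$ step, and without it the proof does not close. Two secondary soft spots: (i) your preliminary localization of the zeros is both unsubstantiated (spurious zeros of non-Hermitian orthogonal polynomials can a priori wander, and the contradiction via specially built $l_{n-1}$ is only asserted) and unnecessary, since one can work with limit measures on $\overline\C$ directly; (ii) the pair of inequalities you propose, $V^\mu+\psi\le c(\psi;\Gamma)$ q.e.\ on $\Gamma$ together with $V^\mu+\psi\ge c(\psi;\Gamma)$ on $\supp\mu$, is not the characterization recalled in Section~\ref{sss:wc}, which requires $V^\mu+\psi$ to be \emph{at least} the constant q.e.\ on all of $\Gamma$; you would have to justify separately that your swapped version still forces $\mu=\omega_{\Gamma,\psi}$.
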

\begin{proof}[Proof of Theorem \ref{thm:mpa}]
Let $E_n$ be the sets constituting the interpolation scheme $\E$. Set $\Psi_n$ to be the reciprocal of the spherically normalized polynomial with zeros at the finite elements of $E_n$, i.e., $\Psi_n=1/\tilde v_n$, where $\tilde v_n$ is the spherical renormalization of $v_n$ (see Definition~\ref{df:pade} and \eqref{eq:sphnp}). Then the functions $\Psi_n$ are holomorphic and non-vanishing in $\C\setminus\supp(\E)$ (in particular, in $\D$), $\frac{1}{2n}\log|\Psi_n| \cic U^\nu$ in $\overline\C\setminus\supp(\nu^*)$ by Lemma~\ref{lem:cwssimcic}, and this convergence is locally uniform in $\D$ by definition of the asymptotic distribution and since $\log1/|z-t|$ is continuous on a neighborhood of $\supp(\E)$ for fixed $z\in\D$. As $U^{\nu}$ is harmonic in $\D$, requirement (1) of Theorem~\hyperref[thm:GR]{GR} is fulfilled with $G=\D$ and $\psi=-U^\nu$. Further, it follows from Theorem~\ref{thm:minset} that $\Gamma_\nu$ is a symmetric set. In particular it is a smooth cut, hence it is tame with tame points $\cup_j\gamma_j$. Moreover, since $\Gamma_\nu$ is regular, we have that $\supp(\omega_{\Gamma,\psi})=\Gamma_\nu$ by \eqref{eq:weqpot} and properties of balayage (Section \ref{dubalai}). Thus, by the remark after Definition~\ref{df:sym}, symmetry implies that $\Gamma_\nu$ possesses the S-property in the field $-U^\nu$ and therefore requirement (2) of Theorem~\hyperref[thm:GR]{GR} is also fulfilled. Let now $Q$, $\deg(Q)=:m$, be a fixed polynomial such that the only singularities of $Qf$ in $\D$ belong to $E_f$. Then $Qf$ is holomorphic and single-valued in $\D\setminus\Gamma_\nu$, it extends continuously from each side on $\cup\gamma_j$, and has a jump there which is continuous and non-vanishing except possibly at countably many points. All the  requirement of Theorem~\hyperref[thm:GR]{GR} are then fulfilled with $g=Qf$.

Let $L\subset\D$ be a smooth Jordan curve that separates $\Gamma_\nu$ and the poles of $f$ (if any) from $\E$. Denote by $q_n$ the  spherically normalized denominators of the multipoint Pad\'e approximants to $f$ associated with $\E$. It is a standard consequence of Definition \ref{df:pade} (see {\it e.g.} \cite[sec. 1.5.1]{GRakh87}) that
\begin{equation}
\label{eq:proof1}
\int_{L} z^jq_n(z)\Psi_n(z)f(z)dz = 0, \quad j\in\{0,\ldots,n-1\}.
\end{equation}

Clearly, relations \eqref{eq:proof1} imply that
\begin{equation}
 \label{eq:proof2}
 \oint_{\Gamma_\nu} (lq_n\Psi_nfQ)(t)dt = 0, \quad \deg(l) < n-m.
\end{equation}
 Equations \eqref{eq:proof2} differ from \eqref{eq:orthorel} only in the reduction of the degree of polynomials $l$ by a constant  $m$. However, to derive the first conclusion of Theorem~\hyperref[thm:GR]{GR}, namely that $\mu_n\cws\omega_{\Gamma,\psi}$, orthogonality relations \eqref{eq:orthorel} are used solely when applied to a specially constructed sequence $\{l_n\}$ such that $l_n=l_{n,1}l_{n,2}$, where $\deg(l_{n,1})\leq n\theta$, $\theta<1$, and $\deg(l_{n,2})=o(n)$ as $n\to\infty$ (see the proof of \cite[Thm. 3]{GRakh87} in between equations (27) and (28)).  Thus, the proof is still applicable in our situation, to the effect that the normalized counting measures of the zeros of $q_n$ converge weak$^*$ to $\widehat\nu^*=\omega_{\Gamma_\nu,-U^\nu}$, see \eqref{eq:whnustar}.

 For each $n\in\N$, let $q_{n,m}$, $\deg(q_{n,m})=n-m$, be a divisor of $q_n$. Observe that the polynomials $q_{n,m}$ have exactly the same asymptotic zero distribution in the weak$^*$ sense as the polynomials $q_n$. Put
 \begin{equation}
 \label{eq:proof3}
 A_{n,m}(z) := \oint (q_{n,m}q_n)(t)\frac{(\Psi_nfQ)(t)dt}{z-t}, \quad z\in D_\nu.
 \end{equation}
Due to orthogonality relations \eqref{eq:proof2}, $A_{n,m}$ can be equivalently rewritten as
 \begin{equation}
 \label{eq:proof4}
 A_{n,m}(z) := \frac{q_{n,m}(z)}{l_{n-m}(z)}\oint (l_{n-m}q_n)(t)\frac{(\Psi_n
fQ)(t)dt}{z-t}, \quad z\in D_\nu,
 \end{equation}
 where $l_{n-m}$ is an arbitrary polynomial of degree at most $n-m$. Formulae \eqref{eq:proof3} and \eqref{eq:proof4} differ from \eqref{eq:funan2} in the same manner as orthogonality relations \eqref{eq:proof2} differ from those in \eqref{eq:orthorel}.  Examination of the proof of \cite[Thm. 3]{GRakh87} (see the discussion there between equations (33) and (37)) shows that limit \eqref{eq:funan1} is proved using expression \eqref{eq:funan2} for $A_n$ with a choice of polynomials $l_n$ that satisfy some set of \emph{asymptotic} requirements  and can be chosen to have the degree $n-m$. Hence it still holds that
\begin{equation}
\label{eq:proof6}
|A_{n,m}(z)|^{1/2n}\cic\exp\left\{-c(-U^\nu;\Gamma_\nu)\right\} \quad \mbox{in} \quad D_\nu.
\end{equation}

Finally, using the Hermite interpolation formula like in \cite[Lem. 6.1.2]{StahlTotik}, the error of approximation has the following representation 

\begin{equation}
\label{eq:proof5}
(f-\Pi_n)(z) = \frac{A_{n,m}(z)}{(q_{n,m}q_nQ\Psi_n)(z)}, \quad z\in D_\nu.
\end{equation}
From Lemma~\ref{lem:cwssimcic} we know that $\log(1/|q_n|)/n\cic V_*^{\widehat{\nu^*}}=V^{\widehat{\nu^*}}$ in $D_\nu$, since ordinary and spherically normalized potentials coincide for measures supported in $\overline{\D}$. This fact together with \eqref{eq:proof6} and \eqref{eq:proof5} easily yield that
\[
|f-\Pi_n|^{1/2n} \cic \exp\left\{-c(-U^\nu;\Gamma_\nu)+V^{\widehat\nu^*}-U^\nu\right\} \quad \mbox{in} \quad D_\nu\setminus\supp(\nu^*).
\]
Therefore, \eqref{eq:mpa} follows from \eqref{eq:toRemind1} and the fact that $U^\nu=V_*^{\nu^*}$ by the remark at the beginning of Section~\ref{sss:wcf}.
\end{proof}

\subsection{Proof of Theorem~\ref{thm:dmcc}, Theorem~\ref{thm:convcap}, Corollary~\ref{cor:normconv},  Theorem~\ref{cor:L2T}, and Theorem~\ref{thm:L2T}}
\label{ss:53}

\begin{proof}[Proof of Theorem \ref{thm:dmcc}] Let $\Gamma\in\pk_f(G)$ be a smooth cut for $f$ that satisfies \eqref{eq:mincondcap} and $\Theta$ be a conformal map of $\D$ onto $G$. Set $K:=\Theta^{-1}(\Gamma)$. Then we get from the conformal invariance of the condenser capacity (see \eqref{eq:coninv}) and the maximum principle for harmonic functions that
\[
\cp(\Gamma,T) = \cp(K,\T) \quad \mbox{and} \quad V_{\overline\C\setminus K}^{\omega_{(K,\T)}} = V_{\overline\C\setminus\Gamma}^{\omega_{(\Gamma,T)}}\circ\Theta \quad \mbox{in} \quad \D.
\]
As $\Theta$ is conformal in $\D$, it can be readily verified that $V_{\overline\C\setminus K}^{\omega_{(K,\T)}}$ satisfies \eqref{eq:mincondcap} as well (naturally, on $K$). Univalence of $\Theta$ also implies that the continuation properties of $(f\circ\Theta)(\Theta^\prime)^{1/2}$ in $\D$ are exactly the same as those of $f$ in $G$. Moreover, this is also true for $f_\Theta$, the orthogonal projection of $(f\circ\Theta)(\Theta^\prime)^{1/2}$ from $L^2$ onto $\bar H_0^2$ (see Section~\ref{sec:ra}). Indeed, $f_\Theta$ is holomorphic in $\Om$ by its very definition and can be continued analytically across $\T$ by  $(f\circ\Theta)(\Theta^\prime)^{1/2}$ minus the orthogonal projection of the latter from $L^2$ onto $H^2$, which is holomorphic in $\D$ by definition. Thus, $f_\Theta\in\alg(\D)$ and $\Gamma\in\pk_f(G)$ if and only if $K\in\pk_{f_\Theta}$. Therefore, it is enough to consider only the case $G=\D$.

Let $\Gamma\in\pk_f$ be a smooth cut for $f$ that satisfies \eqref{eq:mincondcap} and $\K$ be the set of minimal condenser capacity ({\it cf.} Theorem \ref{thm:S}). We must prove that $\Gamma=\K$. Set, for brevity, $D_\Gamma:=\overline\C\setminus\Gamma$, $V_\Gamma:=V_{D_\Gamma}^{\omega_{(\T,\Gamma)}}$, $D_\K:=\overline\C\setminus\K$, $V_\K:=V_{D_\K}^{\omega_{(\T,\K)}}$, and $\Omega$ to be the unbounded component of $D_\K\cap D_\Gamma$.  Let also $f_{D_\Gamma}$ and $f_{D_\K}$ indicate the  meromorphic branches of $f$ in $D_\Gamma$ and $D_\K$, respectively. Arguing as we did to prove (\ref{Gammas2}), we see that no connected component of $\partial\Omega$ can lie entirely in $\Gamma\setminus \K$ (resp. $\K\setminus\Gamma$) otherwise the jump of $f_{D_\Gamma}$ (resp. $f_{D_\K}$) across some subarc of $\Gamma$ (resp. $\K$) would vanish. Hence by connectedness
\begin{equation}
\label{eq:gammanotk1}
\Gamma\cap \K\cap\partial\Omega\neq\varnothing.
\end{equation}
First, we deal with the special situation where $\omega_{(\T,\K)}=\omega_{(\T,\Gamma)}$. Then $V_\Gamma-V_\K$ is harmonic in $\Omega$ by the first claim in Section~\ref{ss:gp}.  As both potentials are constant in $\Om\subset\Omega$, we get that $V_\Gamma=V_\K+\const$ in $\Omega$. Since $\K$ and $\Gamma$ are regular sets, potentials $V_\Gamma$ and $V_\K$ extend continuously to $\partial\Omega$ and vanish at $\partial\Omega\cap\Gamma\cap\K$ which  is non-empty by \eqref{eq:gammanotk1}. Thus, equality of the equilibrium measures 
means that $V_\Gamma\equiv V_\K$ in $\overline{\Omega}$. However, because $V_\Gamma$ (resp. $V_\K$) vanishes precisely on $\Gamma$ (resp. $\K$), this is possible only if $\partial\Omega\subset\Gamma\cap\K$. Taking complements in $\overline{\C}$, we conclude that $D_\Gamma\cup D_\K$, which is connected and contains $\infty$, does not meet $\partial\Omega$. Therefore $D_\Gamma\cup D_\K\subset\Omega\subset D_\Gamma\cap D_\K$, hence $D_\Gamma= D_\K$ thus $\Gamma=\K$, as desired.

In the rest of the proof we assume for a contradiction that $\Gamma\neq\K$. Then $\omega_{(\T,\K)}\neq\omega_{(\T,\Gamma)}$ in view of what precedes, and therefore 
\begin{equation}
\label{eq:pr1-2}
\di_{D_\K}(V_\K) = I_{D_\K}\left[\omega_{(\T,\K)}\right] < I_{D_K}\left[\omega_{(\T,\Gamma)}\right] = \di_{D_\K}(V_{D_\K}^{\omega_{(\T,\Gamma)}})
\end{equation}
by \eqref{eq:di1} and since the Green equilibrium measure is the unique minimizer of the Green energy.

The argument now follows the lines of the proof of Lemma \ref{lem:1e}. Namely, we write
\[
\Gamma= E_0^\Gamma\cup E_1^\Gamma \cup \bigcup \gamma_j^\Gamma, \qquad 
\K = E_0^\K\cup E_1^\K \cup \bigcup \gamma_j^\K,
\]
and we define the sets $\Gamma^1$, $\Gamma^2$, $\Gamma^3$, $\Gamma^4$ like we did in that proof  for $\Gamma^1_s$, $\Gamma_s^2$, $\Gamma_s^3$, $\Gamma_s^4$, upon replacing $D_s$ by $D_\Gamma$, $D_\nu$ by $D_\K$, $E_j^s$ by $E_j^\Gamma$, $E_j^\nu$ by $E_j^\K$, $\gamma_j^s$ by $\gamma_j^\Gamma$ and
$\gamma_j^\nu$ by $\gamma_j^\K$. The same reasoning that led to us to \eqref{Gammas2} and \eqref{Gammas3}
yields
\begin{equation}
\label{GammaG}
\Gamma^2\cap\overline\Omega = \varnothing,\qquad
\left(\Gamma^3\setminus E_1^\Gamma\right)\cap\overline\Omega = \varnothing.
\end{equation}
Subsequently we set $D:=D_\K\setminus(\Gamma^2\cup\Gamma^3)$ and we prove in the same way that it is an open set satisfying 
\begin{equation}
\label{dirichlet0G}
\di_{D_K}(V_{D_\K}^{\omega_{(\T,\Gamma)}}) = \di_D(V_{D_\K}^{\omega_{(\T,\Gamma)}})
\quad \mbox{and} \quad \di_{D_\Gamma}(V_\Gamma) = \di_{D\setminus\Gamma^4}(V_\Gamma)
\end{equation}
(compare \eqref{Dirichlet1}). Defining $\widetilde V$ as in \eqref{defVt} with $V_s$ replaced by $V_\Gamma$, and using the symmetry of $\Gamma$ (that is,  \eqref{eq:mincondcap} with $\Gamma$ instead of $\K$, which allows us to continue $V_\Gamma$ harmonically by $-V_\Gamma$ across each arc $\gamma_j^\Gamma$) we find that $\widetilde V$ is harmonic in $D\setminus\T$, superharmonic in $D$, and that
\begin{equation}
\label{dirichlet1G}
\di_{D_\Gamma\setminus\Gamma^4}(V_\Gamma) = \di_D(\widetilde V) 
\end{equation}
(compare \eqref{Dirichlet2}). Next, we set $h:=\widetilde V-V_{D_\K}^{\omega_{(\T,\Gamma)}}$ which is harmonic in $D$ by the first claim in Section~\ref{ss:gp}, and since $h=V_\Gamma-V_{D_\K}^{\omega_{(\Gamma,\T)}}$ in $\Omega\supset\T$. Because $\widetilde V=-V_\Gamma$ in the neighborhood of $\Gamma_s^2\cup\Gamma_s^3$ 
by \eqref{GammaG}, the same computation as in \eqref{contr3} gives us
\[
\di_D(V_{D_\K}^{\omega_{(\T,\Gamma)}},h)\geq0,
\]
so  we get from \eqref{eq:di1}, \eqref{dirichlet0G}, \eqref{dirichlet1G}, \eqref{positivity} and \eqref{eq:pr1-2} that
\begin{eqnarray}
I_{D_\Gamma}[\omega_{(\T,\Gamma)}] &=& \di_{D_\Gamma}(V_\Gamma) = \di_D(\widetilde V) = \di_D(V_{D_\K}^{\omega_{(\T,\Gamma)}} + h)\nonumber\\
&=&\di_D(V_{D_\K}^{\omega_{(\T,\Gamma)}}) + 2\di_D(V_{D_\K}^{\omega_{(\T,\Gamma)}},h) + \di_D(h) \nonumber \\
\label{eq:pr1-3}
{} &\geq& \di_{D_\K}(V_{D_\K}^{\omega_{(\T,\Gamma)}}) + \di_D(h) > \di_{D_\K}(V_\K) = I_{D_\K}[\omega_{(\T,\K)}].
\end{eqnarray}
However, it holds that
\[
I_{D_\K}[\omega_{(\T,\K)}]=1/\cp(\K,\T) \quad \mbox{and} \quad I_{D_\Gamma}[\omega_{(\T,\Gamma)}]=1/\cp(\Gamma,\T)
\]
by \eqref{eq:exchange}. Thus, \eqref{eq:pr1-3} yields that $\cp(\Gamma,\T)<\cp(\K,\T)$, which is impossible by the very definition of $\K$. This contradiction finishes the proof.
\end{proof}

\begin{proof}[Proof of Theorem \ref{thm:convcap}]
Let $\{r_n\}$ be a sequence of irreducible critical points for $f$. Further, let $\nu_n$ be the normalized counting measures of the poles of $r_n$ and $\nu$ be a weak$^*$ limit point of $\{\nu_n\}$, i.e., $\nu_n\cws\nu$, $n\in\N_1\subset\N$. Recall that all the poles of $r_n$ are contained in $\D$ and therefore $\supp(\nu)\subseteq\overline\D$. 

By Theorem \ref{thm:minset}, there uniquely exists a minimal set $\Gamma_\nu$ for Problem $(f,\nu)$.  Let $Z_n$ be the set of poles of $r_n$, where each pole appears with twice its multiplicity. As mentioned in Section~\ref{sec:ra}, each $r_n$ interpolates $f$ at the points of $Z_n^*$, counting multiplicity. Hence, $\{r_n\}_{n\in\N_1}$ is the sequence of multipoint Pad\'e approximants associated with the triangular scheme $\E=\{Z_n^*\}_{n\in\N_1}$ that has asymptotic distribution $\nu^*$, where $\nu^*$ is the reflection of $\nu$ across $\T$. So, according to Theorem~\ref{thm:mpa} (applied for subsequences), it holds that $\nu = \widehat\nu^*$, $\supp(\nu)=\Gamma_\nu$, i.e., $\nu$ is the balayage of its own reflection across $\T$ relative to $D_\nu$.

Applying Lemma~\ref{lem:pt}, we deduce that $\nu$ is the Green equilibrium distribution on $\Gamma_\nu$ relative to $\D$, that is, $\nu=\omega_{(\Gamma_\nu,\T)}$, and $\widetilde\nu$, the balayage of $\nu$ onto $\T$, is the Green equilibrium distribution on $\T$ relative to $D_\nu$, that is, $\widetilde\nu=\omega_{(\T,\Gamma_\nu)}$. Moreover, Lemma~\ref{lem:pt} yields that $V_{D_\nu}^{\nu^*}=V_{D_\nu}^{\widetilde\nu}$ in $\D$ and therefore $V_{D_\nu}^{\widetilde\nu}$ enjoys symmetry property \eqref{eq:mincondcap} by Theorem~\ref{thm:minset}. Hence, we get from Theorem~\ref{thm:dmcc} that $\Gamma_\nu=\K$, the set of minimal condenser capacity for $f$, and that $\nu=\ged$. Since $\nu$ was an arbitrary limit point of $\{\nu_n\}$, we have that $\nu_n\cws\ged$ as $n\to\infty$. Finally, observe that \eqref{eq:Convergence1} is a direct consequence of Theorem~\ref{thm:mpa}.

To prove \eqref{eq:Convergence2}, we need to go back to representation \eqref{eq:proof5}, where $q_n\in\mpoly_n$ is the denominator of an irreducible critical point $r_n$ and $q_{n,m}$, $\deg(q_{n,m})=n-m$, is an arbitrary divisor of $q_n$, while $\Psi_n=1/\widetilde q_n^2$ with $\widetilde q_n(z)=z^n\overline{q_n(1/\bar z)}$.

Denote by $b_n$ the Blaschke product $q_n/\widetilde q_n$. It is easy to check that $b_n(z)\overline{b_n(1/\bar z)}\equiv1$ by algebraic properties of Blaschke products. Thus, \eqref{eq:proof5} yields that

\begin{equation}
\label{eq:4step}
(f-r_n)(z) = \overline{b^2_n(1/\bar z)}(l_{n,m}A_{n,m}/Q)(z), \quad z\in\overline\Om.
\end{equation}
where $l_{n,m}$ is the polynomial of degree $m$ such that $q_n=q_{n,m}l_{n,m}$. Choose $\epsilon>0$ so small that $\K\subset\D_{1-\epsilon}$ (see Theorem \ref{thm:minset}). As $l_{n,m}$ is an arbitrary divisor of $q_n$ of degree $m$, we can choose it to have zeros only in $\D_{1-\epsilon}$ for  all $n$ large enough (this is possible since in full proportion the zeros of $q_n$ approach $\K$). Then it holds that
\begin{equation}
\label{eq:1step}
\lim_{n\to\infty}|l_{n,m}/Q|^{1/2n} = 1
\end{equation}
uniformly on $\overline\Om$.
Further, by \eqref{eq:Convergence1} and the last claim of Lemma~\ref{lem:pt}, we have that
\begin{equation}
\label{eq:5step}
|f-r_n|^{1/2n} \cic \exp\left\{-\frac{1}{\cp(\K,\T)}\right\} \quad \mbox{on} \quad \T.
\end{equation}
As any Blaschke product is unimodular on the unit circle, we deduce from \eqref{eq:4step}--\eqref{eq:5step} with the help of \eqref{eq:proof6} ({\it i.e.,} $A_{n,m}$ goes to a constant) that
\[
|A_{n,m}|^{1/2n} \cic \exp\left\{-\frac{1}{\cp(\K,\T)}\right\} \quad \mbox{in} \quad \overline\C\setminus\K.
\]
Then we get from Lemma~\ref{lem:cic} that 
\begin{equation}
\label{eq:2step}
\limsup_{n\to\infty} |A_{nm}|^{1/2n} \leq \exp\left\{-\frac{1}{\cp(\K,\T)}\right\} 
\end{equation}
uniformly on closed subsets of $\overline\C\setminus\K$, in particular, uniformly on $\overline\Om$. Set $q_{n,\epsilon}$ for the monic polynomial whose zeros are those of $q_n$ lying in $\D_{1-\epsilon}$. Put $n_\epsilon:=\deg(q_{n,\epsilon})$, $\widetilde q_{n,\epsilon}(z)=z^{n_\epsilon}\overline{q_{n,\epsilon}(1/\bar z)}$,  and let $\nu_{n,\epsilon}$ be the normalized counting measure of the zeros of $q_{n,\epsilon}$. As $\nu_n\cws\ged$, it is easy to see that $n_{\epsilon}/n\to 1$ and that $\nu_{n,\epsilon}\cws\ged$ when $n\to+\infty$. Thus, by the principle of descent (Section~\ref{sss:wsccic}), it holds  that
\begin{equation}
\label{estqeps}
\limsup_{n\to\infty}|q_{n,\epsilon}|^{1/n} =
\limsup_{n\to\infty}|q_{n,\epsilon}|^{1/{n_\epsilon}} 
\leq \exp\left\{-V^{\ged}\right\},
\end{equation}
locally uniformly in $\C$. In another connection, since $\log|1-z\bar{u}|$ is 
continuous for $(z,u)\in \D_{1/(1-\epsilon)}\times\D_{1-\epsilon}$, it follows
easily from the weak$^*$ convergence of $\nu_{n,\epsilon}$ that
\begin{equation}
\label{estqteps}
\lim_{n\to\infty}|\widetilde q_{n,\epsilon}(z)|^{1/n} =
\lim_{n\to\infty}|\widetilde q_{n,\epsilon}(z)|^{1/{n_\epsilon}} 
=\exp\left\{\int\log|1-z\bar{u}|d\ged(u)\right\},
\end{equation}
uniformly in $\overline{\D}$. Put $b_{n,\epsilon}:=q_{n,\epsilon}/\widetilde{q}_{n.\epsilon}$. Since the Green function of $\D$ with pole at $u$ is given by $\log|(1-z\bar u)/(z-u)|$, we deduce from \eqref{estqeps}, \eqref{estqteps}, and a simple majorization that
\[
\limsup_{n\to\infty}|b_n|^{1/n} \leq \limsup_{n\to\infty}|b_{n,\epsilon}|^{1/n} \leq \exp\left\{-V_\D^{\ged}\right\}
\]
uniformly in $\overline\D$. Besides, the Green function of $\Om$ is still given by $\log|(1-z\bar u)/(z-u)|$, hence $V_\D^\omega(1/\bar z)=V_\Om^{\omega^*}(z)$, $z\in\Om$, where $\omega$ is any measure supported in $\D$. Thus, we derive that
\begin{equation}
\label{eq:3step}
\limsup_{n\to\infty}|b_n^2(1/\bar z)|^{1/2n} \leq \exp\left\{-V_\Om^{\ged^*}(z)\right\}
\end{equation}
holds uniformly on $\overline\Om$. Combining \eqref{eq:4step}--\eqref{eq:3step}, we deduce that
\[
\limsup_{n\to\infty}|f-r_n|^{1/2n} \leq \exp\left\{-\frac{1}{\cp(\K,\T)}-V_\Om^{\ged^*}\right\}
\]
uniformly on $\overline\Om$. This finishes the proof of the theorem since $V_{\overline\C\setminus\K}^{\ged^*}=\frac{1}{\cp(\K,\T)}+V_\Om^{\ged^*}$ in $\overline\Om$ by Lemma~\ref{lem:pt}, the maximum principle for harmonic functions applied in $\Om$, and the fact that the difference of two Green potentials of the same measure but on different domains is harmonic in a neighborhood of the support of that measure by the first claim in Section~\ref{ss:gp}.
\end{proof}

\begin{proof}[Proof of Corollary~\ref{cor:normconv}] 

It follows from \eqref{eq:Convergence2} and Lemma~\ref{lem:pt} that
\[
\limsup_{n\to\infty}\|f-r_n\|_\T^{1/2n} \leq \exp\left\{-\frac{1}{\cp(\K,\T)}\right\}.
\]
On the other hand, by \eqref{eq:Convergence1} and the very definition of convergence in capacity, we have for any $\epsilon>0$ small enough that
\[
|f-r_n|>\left(\exp\left\{-\frac{1}{\cp(\K,\T)}\right\} - \epsilon\right)^{2n} \quad \mbox{on} \quad \T\setminus S_{n,\epsilon},
\]
where $\cp(S_{n,\epsilon})\to0$ as $n\to\infty$. In particular, it means that $|S_{n,\epsilon}|\to0$ by \cite[Thm. 5.3.2(d)]{Ransford}, where $|S_{n,\epsilon}|$ is the arclength measure of $S_{n,\epsilon}$. Hence, we have that
\begin{eqnarray}
\liminf_{n\to\infty}\|f-r_n\|_2^{1/2n} &\geq& \lim_{n\to\infty}\left(\frac{|\T\setminus S_{n,\epsilon}|}{2\pi}\right)^{1/4n}\left(\exp\left\{-\frac{1}{\cp(\K,\T)}\right\}-\epsilon\right) \nonumber \\
{} &=& \exp\left\{-\frac{1}{\cp(\K,\T)}\right\}-\epsilon. \nonumber
\end{eqnarray}
As $\epsilon$ was arbitrary and since $\|f-r_n\|_2\leq2\pi\|f-r_n\|_\T$, this finishes the proof of the corollary.
\end{proof}

\begin{proof}[Proof of Theorem~\ref{cor:L2T}]
Let $\Theta$ be the conformal map of $\D$ onto $G$. Observe that $\Theta^\prime$ is a holomorphic function in $\D$ with integrable trace on $\T$ since $T$ is rectifiable \cite[Thm.~3.12]{Duren}, and that $\Theta$ extends in a continuous manner to $\T$ where it is absolutely continuous. Hence, $(f\circ\Theta)(\Theta^\prime)^{1/2}\in L^2$. Moreover, $g$ lies in $E_n^2(G)$ if and only if $(g\circ\Theta)(\Theta')^{1/2}$ lies in $H_n^2:=H^2\mpoly^{-1}_n$. Indeed, denote by $E^\infty(G)$ the space of bounded holomorphic functions in $G$ and set $E_n^\infty(G):=E^\infty(G)\mpoly^{-1}_n(G)$. It is clear that $g\in E_n^\infty(G)$ if and only if it is meromorphic in $G$ and bounded outside a compact subset thereof. This makes it obvious that $g\in E_n^\infty(G)$ if and only if $g\circ\Theta\in H_n^\infty:=H^\infty\mpoly^{-1}_n$, where $H^\infty$ is the space of bounded holomorphic functions in $\D$. It is also easy to see that $E_n^2(G)=E^2(G)E_n^\infty(G)$.  Since it is known that $g\in E^2(G)$ if and only if $(g\circ\Theta)(\Theta')^{1/2}\in H^2$ \cite[corollary to Thm.~10.1]{Duren}, the claim follows. Notice also that $g_n$ is a best approximant for $f$ from $E_n^2(G)$ if and only if $(g_n\circ\Theta)(\Theta')^{1/2}$ is a best approximant for $(f\circ\Theta)(\Theta')^{1/2}$ from $H_n^2$. This is immediate from the change of variable formula, namely,
\[
\|f-g\|_{2,T}^2=\int_{\T}|f\circ\Theta-g\circ\Theta|^2|\Theta^\prime|d\theta = \|(f\circ\Theta)(\Theta^\prime)^{1/2}-(g_n\circ\Theta)(\Theta^\prime)^{1/2}\|_2^2,
\]
where we used the fact that $|d\Theta(e^{i\theta})| =|\Theta'(e^{i\theta})| d\theta$ a.e. on $\T$ \cite[Thm.~3.11]{Duren}.

Now, let $g_n$ be a best meromorphic approximants for $f$ from $E^2_n(G)$. As $L^2=H^2\oplus\bar H^2_0$, it holds that $(g_n\circ\Theta)(\Theta^\prime)^{1/2}=g_n^++r_n$ and $(f\circ\Theta)(\Theta^\prime)^{1/2}=f^++f^-$, where $g_n^+,f^+\in H^2$ and $r_n,f^-\in\bar H^2_0$. Moreover, it can be easily checked that $r_n\in\rat_n$ and, as explained at the beginning of the proof of Theorem~\ref{thm:dmcc}, that $f^-\in\alg(\D)$. Since by Parseval's relation
\[
\|(f\circ\Theta)(\Theta^\prime)^{1/2}-(g_n\circ\Theta)(\Theta^\prime)^{1/2}\|_2^2 = \|f^+-g_n^+\|_2^2+\|f^--r_n\|_2^2,
\]
we immediately deduce that $g_n^+=f^+$ and that $r_n$ is an $\bar H^2_0$-best rational approximant for $f^-$. Moreover, by the conformal invariance of the condenser capacity (see \eqref{eq:coninv}), $\cp(\K,T)=\cp(\Theta^{-1}(\K),\T)$. It is also easy to verify that $K\in\pk_f(G)$ if and only if $\Theta^{-1}(K)\in\pk_{f^-}(\D)$. Hence, we deduce from Theorem~\ref{thm:convcap} and the remark thereafter  that
\[
|f^--r_n|^{1/2n} \cic \exp\left\{V_\D^{\omega_{(\Theta^{-1}(\K),\T)}}-\frac{1}{\cp(\Theta^{-1}(\K),\T)}\right\} \quad \mbox{in} \quad \D\setminus\Theta^{-1}(\K).
\]
The result then follows from the conformal invariance of the Green equilibrium measures, Green capacity, and Green potentials and the fact that, since $\Theta$ is locally Lipschitz-continuous in $\D$, it cannot locally increase the capacity by more than a multiplicative constant \cite[Thm. 5.3.1]{Ransford}.
\end{proof}

\begin{proof}[Proof of Theorem~\ref{thm:L2T}]
By Theorem~\hyperref[thm:S]{S} and decomposition \eqref{eq:toRemind}, the set  $\K$ of minimal condenser capacity for $f$ is a smooth cut, hence a tame compact set with tame points $\cup\gamma_j$, such that
\[
\frac{\partial}{\partial\n^+}V^{\widehat\omega_{(T,\K)}-\omega_{(T,\K)}} = \frac{\partial}{\partial\n^-} V^{\widehat\omega_{(T,\K)}-\omega_{(T,\K)}} \quad \mbox{on} \quad \bigcup \gamma_j,
\]
where $\widehat\omega_{(T,\K)}$ is the balayage of $\omega_{(T,\K)}$ onto $\K$. As $\widehat\omega_{(T,\K)}$ is the weighted equilibrium distribution on $\K$ in the field $V^{-\omega_{(T,\K)}}$ (see \eqref{eq:weqpot}), the set $\K$ possesses the S-property in the sense of \eqref{eq:sproperty}. If $f$ is holomorphic in $\overline{\C}\setminus\K$ and since it extends continuously  from both sides on each $\gamma_j$ with a jump that can vanish in at most countably many points, we get from \cite[Thm. 1$^\prime$]{GRakh87} that
\begin{equation}
\label{eq:upperrho}
\lim_{n\to\infty}\rho_{n,\infty}^{1/2n}(f,T) = \exp\left\{-\frac{1}{\cp(\K,T)}\right\}.
\end{equation}
However, Theorem~1$^\prime$ in \cite{GRakh87} is obtained as an application of Theorem~\hyperref[thm:GR]{GR}. Since the latter also holds for functions in $\alg(G)$, that is, those that are meromorphic in $\overline{\C}\setminus\K$, (see the explanation in the proof of Theorem~\ref{thm:mpa}), \eqref{eq:upperrho} is valid for these functions as well. As $\rho_{n,2}(f,T)\leq |T|\rho_{n,\infty}(f,T)$, where $|T|$ is the arclength of $T$, we get from \eqref{eq:upperrho} that
\[
\limsup_{n\to\infty}\rho_{n,2}^{1/2n}(f,T) \leq \exp\left\{-\frac{1}{\cp(\K,T)}\right\}.
\]

On the other hand, let $g_n$ be a best meromorphic approximants for $f$ from $E_n^2(G)$ as in Theorem~\ref{cor:L2T}. Using the same notation, it was shown that $((f-g_n)\circ\Theta)(\Theta^\prime)^{1/2}=(f^--r_n)$, where $r_n$ is a best $\bar H_0^2$-rational approximant for $f^-$ from $\rat_n$. Hence, we deduce from the chain of equalities
\[
\|f-g_n\|_{2,T} = \|(f\circ\Theta)(\Theta^\prime)^{1/2}-(g_n\circ\Theta)(\Theta^\prime)^{1/2}\|_2 = \|f^--r_n\|_2
\] 
and Corollary~\ref{cor:normconv} that
\[
\lim_{n\to\infty}\|f-g_n\|_{2,T}^{1/2n} = \exp\left\{-\frac{1}{\cp(\Theta^{-1}(\K),\T)}\right\} = \exp\left\{-\frac{1}{\cp(\K,T)}\right\}.
\]
As $\rho_{n,2}(f,T)\geq\|f-g_n\|_{2,T}$ by the very definition of $g_n$ and the inclusion $\rat_n(G)\subset E^2_n(G)$, the lower bound for the limit inferior of $\rho_{n,2}^{1/2n}(f,T)$ follows.
\end{proof}

\section{Some Potential Theory}
\label{sec:pt}

Below we give a brief account of logarithmic potential theory that was used extensively throughout the paper. We refer the reader  to the monographs \cite{Ransford,SaffTotik} for a thorough treatment.

\subsection{Capacities}

In this section we introduce, logarithmic, weighted, and condenser capacities.

\subsubsection{Logarithmic Capacity}
\label{sss:lc} 
The {\it logarithmic potential} of a finite positive measure $\omega$, compactly supported in $\C$, is defined by
\[
V^\omega(z):=-\int\log|z-u|d\omega(u), \quad z\in\C.
\]
The function $V^\omega$ is superharmonic with values in $(-\infty,+\infty]$ and is not identically $+\infty$. The {\it logarithmic energy} of $\omega$ is defined by
\[
I[\omega]:=\int V^\omega(z)d\omega(z)=-\iint\log|z-u|d\omega(u)d\omega(z).
\]
As $V^\omega$ is bounded below on $\supp(\omega)$, it follows that $I[\omega]\in(-\infty,+\infty]$.

Let $F\subset \C$ be compact and $\Lm(F)$ denote the set of all probability measures supported on $F$. If the logarithmic energy of every measure in $\Lm(F)$ is infinite, we say that $F$ is {\it polar}. Otherwise, there exists a unique $\omega_F\in\Lm(F)$ that minimizes the logarithmic energy over all measures in $\Lm(F)$. This measure is called the {\it equilibrium distribution} on $F$ and it is known that $\omega_F$ is supported on the outer boundary of $F$, i.e., the boundary of the unbounded component of the complement of $F$. Hence, if $K$ and $F$ are two compact sets with identical outer boundaries, then $\omega_K=\omega_F$.

The {\it logarithmic capacity}, or simply the capacity, of $F$ is defined as
\[
\cp(F)=\exp\{-I[\omega_F]\}.
\]
By definition, the capacity of an arbitrary subset of $\C$ is the {\it supremum} of the capacities of its compact subsets. We agree that the capacity of a polar set is zero. It follows readily from what precedes that the capacity of a compact set is equal to the capacity of its outer boundary.

We say that a property holds \emph{quasi everywhere (q.e.)} if it holds everywhere except on a set of zero capacity. We also say that a sequence of functions $\{h_n\}$ converges {\it in capacity} to a function $h$, $h_n\cic h$, on a compact set $K$ if for any $\epsilon>0$ it holds that
\[
\lim_{n\to\infty} \cp\left(\left\{z\in K:~ |h_n(z)-h(z)| \geq \epsilon\right\}\right) = 0.
\]
Moreover, we say that the sequence $\{h_n\}$ converges in capacity to $h$ in a domain $D$ if it converges in capacity on each compact subset of $D$. In the case of an unbounded domain, $h_n\cic h$ around infinity if $h_n(1/\cdot)\cic h(1/\cdot)$ around the origin.

When the support of $\omega$ is unbounded, it is easier to consider $V_*^\omega$, the spherical logarithmic potential of $\omega$, i.e.,
\begin{equation}
\label{eq:sphpot}
V_*^\omega(z) = \int k(z,u)d\omega(u), \quad k(z,u) = -\left\{
\begin{array}{ll}
\log|z-u|, & \mbox{if} ~ |u|\leq1, \smallskip \\
\log|1-z/u|, & \mbox{if} ~ |u|>1.
\end{array}
\right.
\end{equation}
The advantages of dealing with the spherical logarithmic potential shall become apparent later in this section.

\subsubsection{Weighted Capacity} 
\label{sss:wc}

Let $F$ be a non-polar compact set and $\psi$ be a lower semi-continuous function on $F$ such that $\psi<\infty$ on a non-polar subset of $F$. For any measure $\omega\in\Lm(F)$, we define the weighted energy\footnote{Logarithmic energy with an external field is called weighted as it turns out to be an important object in the study of weighted polynomial approximation \cite[Ch. VI]{SaffTotik}.} of $\omega$ by
\[
I_\psi[\omega] := I[\omega] + 2\int\psi d\omega.
\]
Then there exists a unique measure $\omega_{F,\psi}$, the \emph{weighted equilibrium distribution} on $F$, that minimizes $I_\psi[\omega]$ among all measures in $\Lm(F)$ \cite[Thm. I.1.3]{SaffTotik}. Clearly, $\omega_{F,\psi}=\omega_F$ when $\psi\equiv0$.

The measure $\omega_{F,\psi}$ admits the following characterization \cite[Thm. I.3.3]{SaffTotik}. Let $\omega$ be a positive Borel measure with compact support and finite energy such that $V^\omega+\psi$ is constant q.e. on $\supp(\omega)$ and at least as large as this constant q.e. on $F$. Then $\omega=\omega_{F,\psi}$. The value of $V^\omega+\psi$ q.e. on $\supp(\omega_{F,\psi})$ is called the \emph{modified Robin constant} and it can be expressed as
\begin{equation}
\label{eq:robinconst}
c(\psi;F) =  I_\psi[\omega_{F,\psi}]-\int\psi d\omega_{F,\psi} = I[\omega_{F,\psi}]+\int\psi d\omega_{F,\psi}.
\end{equation}
The \emph{weighted capacity} of $F$ is defined as $\displaystyle \cp_\psi(F) = \exp\left\{-I_\psi[\omega_{F,\psi}]\right\}$.

\subsubsection{Condenser Capacity}
\label{sss:cc}

Let now $D$ be a domain with non-polar boundary and $g_D(\cdot,u)$ be the Green function for $D$ with pole at $u\in D$. That is, the unique function such that
\begin{itemize}
\item [(i)]   $g_D(z,u)$ is a positive harmonic function in $D\setminus\{u\}$, which is bounded outside each neighborhood of $u$; \smallskip
\item [(ii)]  $\displaystyle g_D(z,u) + \left\{\begin{array}{ll}
-\log|z|,   & \mbox{if}\quad u=\infty, \\ 
\log|z-u|, & \mbox{if}\quad u\neq\infty, 
\end{array}\right.$ is bounded near $u$; \smallskip
\item [(iii)] $\displaystyle \lim_{z\to\xi, \; z\in D}g_D(z,u)=0$ for quasi every $\xi\in\partial D$.
\end{itemize}
For definiteness, we set $g_D(z,u)=0$ for any $z\in\overline\C\setminus\overline D$, $u\in D$. Thus, $g_D(z,u)$ is defined throughout the whole extended complex plane.

It is known that $g_D(z,u)=g_D(u,z)$, $z,u\in D$, and that the subset of $\partial D$ for which (iii) holds does not depend on $u$. Points of continuity of $g_D(\cdot,u)$ on $\partial D$ are called {\it regular}, other points on $\partial D$ are called irregular; the latter form $F_\sigma$ polar set (in particular, it is totally disconnected). When $F$ is compact and non-polar, we define regular points of $F$ as points of continuity of $g_D(\cdot,\infty)$, where $D$ is the unbounded component of the complement of $F$. In particular, all the inner points of $F$ are regular, i.e., the irregular points of $F$ are contained in the outer boundary of $F$, that is, $\partial D$. We call $F$ \emph{regular} if all the point of $F$ are regular.

It is useful to notice that for a compact non-polar set $F$ the uniqueness of the Green function implies that
\begin{equation}
\label{eq:EqPotGreenF}
g_{\overline\C\setminus F} (z,\infty) \equiv -\log\cp(F) - V^{\omega_F}(z), \quad z\in\overline\C\setminus F,
\end{equation}
by property (ii) in the definition of the Green function and the characterization of the equilibrium potential (see explanation before \eqref{eq:robinconst}).

In analogy to the logarithmic case, one can define the {\it Green potential} and the {\it Green energy} of a positive measure $\omega$ supported in a domain $D$ as
\[
V_D^\omega(z):=\int g_D(z,u)d\omega(u) \quad \mbox{and} \quad I_D[\omega] := \iint g_D(z,w)d\omega(z)d\omega(w).
\]
Exactly as in the logarithmic case, if $E$ is a non-polar compact subset of $D$, there exists a unique measure $\omega_{(E,\partial D)}\in\Lm(E)$ that minimizes the Green energy among all measures in $\Lm(E)$. This measure is called the {\it Green equilibrium distribution} on $E$ relative to $D$. The \emph{condenser capacity of $E$} relative to $D$ is defined as
\[
\cp(E,\partial D) := 1/I_D[\omega_{(E,\partial D)}].
\]
It is known that the Green potential of the Green equilibrium distribution satisfies
\begin{equation}
\label{eq:GreenEqual}
V_D^{\omega_{(E,\partial D)}}(z) = \frac{1}{\cp(E,\partial D)}, \quad \mbox{for q.e.} \quad z\in E.
\end{equation}
Moreover, the equality in (\ref{eq:GreenEqual}) holds at all the regular points of $E$. Furthermore, it is known that $\omega_{(E,\partial D)}$ is supported on the outer boundary of $E$. That is, 
\begin{equation}
\label{eq:outerboundary}
\omega_{(E,\partial D)}=\omega_{(\partial\Omega,\partial D)},
\end{equation}
where $\Omega$ is the unbounded component of the complement of $E$.

Let $F$ be a non-polar compact set, $D$ any component of the complement of $F$, and $E$ a non-polar subset of $D$. Then we define $\omega_{(E,F)}$ and $\cp(E,F)$ as $\omega_{(E,\partial D)}$ and $\cp(E,\partial D)$, respectively. It is known that
\begin{equation}
\label{eq:exchange}
\cp(E,F)=\cp(F,E),
\end{equation}
where $F$ and $E$ are two disjoint compact sets with connected complements. That is, the condenser capacity is symmetric with respect to its entries and only the outer boundary of a compact plays a role in calculating the condenser capacity.

As in the logarithmic case, the Green equilibrium measure can be characterized by the properties of its potential. Namely, if $\omega$ has finite Green energy, $\supp(\omega)\subseteq E$, $V_D^\omega$ is constant q.e. on $\supp(\omega)$ and is at least as large as this constant q.e. on $E$, then $\omega=\omega_{(E,\partial D)}$ \cite[Thm. II.5.12]{SaffTotik}. Using this characterization and the conformal invariance of the Green function, one can see that the condenser capacity is also conformally invariant. In other words, it holds that
\begin{equation}
\label{eq:coninv}
\cp(E,\partial D) = \cp(\phi(E),\partial\phi(D)),
\end{equation}
where $\phi$ is a conformal map of $D$ onto its image.

\subsection{Balayage}
\label{ss:balayage}

In this section we introduce the notion of balayage of a measure and describe some of its properties.

\subsubsection{Harmonic Measure}
Let $D$ be a domain  with compact boundary $\partial D$ of positive capacity and $\{\omega_z\}_{z\in D}$, be the harmonic measure for $D$. That is, $\{\omega_z\}_{z\in D}$ is the collection of probability Borel measures on $\partial D$ such that for any bounded Borel function $f$ on $\partial D$ the function
\[
P_Df(z) := \int fd\omega_z, \quad z\in D,
\]
is harmonic \cite[Thm. 4.3.3]{Ransford} and $\lim_{z\to x}P_Df(z) = f(x)$ for any regular point $x\in\partial D$ at which $f$ is continuous \cite[Thm. 4.1.5]{Ransford}. 

The generalized minimum principle \cite[Thm. I.2.4]{SaffTotik} says that if $u$ is superharmonic, bounded below, and $\liminf_{z\to x,z\in D}u(z)\geq m$ for q.e. $x\in\partial D$, then $u>m$ in $D$ unless $u$ is a constant. This immediately, implies that
\begin{equation}
\label{eq:equalharmonic}
P_Dh=h
\end{equation}
for any $h$ which is bounded and harmonic in $D$ and extends continuously to q.e. point of $\partial D$.

For $z\in\C$ and $z\neq w\in D\setminus\{\infty\}$, set
\begin{equation}
\label{eq:funhd}
h_D(z,w) := \left\{
\begin{array}{l}
\log|z-w|+g_D(z,w), \quad  \mbox{if} ~ D ~ \mbox{is bounded}, \smallskip \\
\log|z-w|+g_D(z,w)-g_D(z,\infty)-g_D(w,\infty), \quad \mbox{otherwise}.
\end{array}
\right.
\end{equation}
Observe that by the properties of Green function $h_D(z,\cdot)$ is harmonic at $z$. Moreover, it can be computed using \eqref{eq:EqPotGreenF} that $\lim_{|zw|\to\infty}h_D(z,w)=\log\cp(\partial D)$ when $D$ is unbounded. Therefore, $h_D(z,w)$ is defined for all $w\in D$ and $z\in\C\cup D$.   Moreover, for each $w\in D$, the function $h_D(\cdot,w)$ is bounded and harmonic in $D$ and extends continuously to every regular point of $\partial D$. It is also easy to see that $h_D(z,w)=h_D(w,z)$ for $z,w\in D$. Hence, we deduce from \eqref{eq:equalharmonic} that
\begin{equation}
\label{eq:harmoniclog}
h_D(z,w) = \left\{
\begin{array}{l}
P_D(\log|z-\cdot|)(w), \quad  \mbox{if} ~ D ~ \mbox{is bounded}, \smallskip \\
P_D(\log|z-\cdot|-g(z,\infty))(w), \quad \mbox{otherwise},
\end{array}
\right.
\end{equation}
$z\in(\C\cup D)\setminus\partial D$, for $w\in D$ and all regular $w\in\partial D$.

\subsubsection{Balayage}
\label{dubalai}
Let $\nu$ be a finite Borel measure supported in $D$. The \emph{balayage} of $\nu$, denoted by $\widehat\nu$, is a Borel measure on $\partial D$ defined by
\begin{equation}
\label{eq:balayage}
\widehat\nu(B) := \int\omega_t(B)d\nu(t)
\end{equation}
for any Borel set $B\subset\partial D$. Since $\omega_z(\partial D)=1$, the total mass of $\widehat\nu$ is equal to the total mass of $\nu$. Moreover, it follows immediately from \eqref{eq:balayage} that $\widehat\delta_z=\omega_z$, $z\in D$. In particular, if $D$ is unbounded, $\widehat\delta_\infty=\omega_\infty=\omega_{\partial D}$ (for the last equality see \cite[Thm. 4.3.14]{Ransford}). In other words, $\widehat\delta_\infty$ is the logarithmic equilibrium distribution on $\partial D$.

It is a straightforward consequence of \eqref{eq:balayage} that
\begin{equation}
\label{eq:sweep}
\int fd\widehat\nu = \int P_Dfd\nu
\end{equation}
for any bounded Borel function on $\partial D$. Thus, we can conclude from \eqref{eq:equalharmonic} and \eqref{eq:sweep} that
\begin{equation}
\label{balayageh}
\int hd\widehat\nu=\int hd\nu
\end{equation}
for any function $h$ which is bounded and harmonic in $D$ and extends continuously to q.e. point of $\partial D$. 

Assume now that $x\in\partial D$ is a regular point and $W$ an open neighborhood of $x$ in $\partial D$. Let further $f\geq0$ be a continuous function on $\partial D$ which is supported in $W$ and such that $f(x)>0$. Since $P_Df(z)\to f(x)$ when $D\ni z\to x$, we see from (\ref{eq:sweep}) that $\hat{\nu}(W)>0$. In particular, $\partial D\setminus\supp(\hat{\nu})$ is polar.

Let $D^\prime$ be a domain with non-polar compact boundary such that $\overline D\subset D^\prime$ and let $\{\omega_z^\prime\}_{z\in D^\prime}$ be the harmonic measure for $D^\prime$. For any Borel set $B\subset\partial D^\prime$ it holds that $\omega_z^\prime(B)$ is a harmonic function in $D$ with continuous boundary values on $\partial D$. Thus,
\[
\int\omega_z^\prime(B)d\widehat\nu(z) = \int\omega_z^\prime(B)d\nu(z)
\]
by \eqref{balayageh}. This immediately implies that
\begin{equation}
\label{eq:balinsteps}
\widetilde\nu=\widetilde{\widehat\nu},
\end{equation}
where $\widetilde\nu$ is the balayage of $\nu$ onto $\partial D^\prime$. In other words, balayage can be done step by step.

\subsubsection{Balayage and Potentials}
\label{BP}
It readily follows from \eqref{eq:funhd}, \eqref{eq:harmoniclog}, and \eqref{balayageh} that
\begin{equation}
\label{eq:inthd}
\int h_D(z,w)d\nu(w) = \left\{
\begin{array}{l}
-V^{\widehat\nu}(z), \quad  \mbox{if} ~ D ~ \mbox{is bounded}, \smallskip \\
-V^{\widehat\nu}(z)-g_D(z,\infty), \quad \mbox{otherwise},
\end{array}
\right. \quad z\in (\C\cup D)\setminus\partial D.
\end{equation}
Clearly, the left-hand side of \eqref{eq:inthd} extends continuously to q.e. $z\in\partial D$. Thus, the same is true for the right-hand side. In particular, this means that $V^{\widehat\nu}$ is bounded on $\partial D$ and continuous q.e. on $\partial D$. Hence, $\widehat\nu$ has finite energy.

In the case when $\nu$ is compactly supported in $D$, formula \eqref{eq:inthd} has even more useful consequences. Namely, it holds that
\begin{equation}
\label{eq:toRemind}
V_D^\nu(z) = V^{\nu-\widehat\nu}(z) + c(\nu;D), \quad z\in\overline\C,
\end{equation}
where $c(\nu;D)=\int g_D(z,\infty)d\nu(z)$ if $D$ is unbounded and $c(\nu;D)=0$ otherwise, and where we used a continuity argument to extend \eqref{eq:toRemind} to every $z\in\overline\C$. This, in turn, yields that
\begin{equation}
\label{eq:equalBal}
V^{\widehat\nu}(z) = V^\nu(z)+c(\nu;D) \quad \mbox{for q.e.} \quad z\in\C\setminus D,
\end{equation}
where  equality holds for all $z\in\C\setminus\overline D$ and also at all regular points of $\partial D$. Moreover, employing the characterization of weighted equilibrium measures, we obtain from \eqref{eq:equalBal} that
\begin{equation}
\label{eq:weqpot}
\widehat\nu = \omega_{\partial D,-V^\nu} \quad \mbox{and} \quad c(-V^\nu;\partial D) = c(\nu;D).
\end{equation}

If a measure $\nu$ is not compactly supported, the logarithmic potential of $\nu$ may not be defined. However, representations similar to \eqref{eq:toRemind}--\eqref{eq:weqpot} can be obtained using the spherical logarithmic potentials. Indeed, it follows from \eqref{eq:inthd} that
\begin{eqnarray}
(V_*^\nu-V^{\widehat\nu} - V_D^\nu)(z) &=& \int \left[k(z,u)+\log|z-u|-g_D(u,\infty)\right]d\nu(u) \nonumber \\
{} &=&  \int_{|u|>1}\left[\log|u|-g_D(u,\infty)\right]d\nu(u)-\int_{|u|\leq1}g_D(u,\infty)d\nu(u). \nonumber
\end{eqnarray}
As the right-hand side of the chain of the equalities above is a finite constant and $V_D^\nu$ vanishes quasi everywhere on $\partial D$, we deduce as in \eqref{eq:toRemind}--\eqref{eq:weqpot} that this constant is $-c(-V_*^\nu;\partial D)$ and that
\begin{equation}
\label{eq:weqspot}
\widehat\nu = \omega_{\partial D,-V_*^\nu}.
\end{equation}
Moreover, it holds that
\begin{equation}
\label{eq:toRemind1}
V_D^\nu(z) = V_*^\nu(z) - V^{\widehat\nu}(z) + c(-V_*^\nu;\partial D), \quad z\in\overline\C.
\end{equation}

Let now $D$ be a bounded domain and $K$ be a compact non-polar subset of $D$. If $E\subseteq K$ is also non-polar and compact, then
\begin{equation}
\label{eq:energies}
|I_D[\omega_E]-I[\omega_E]| \leq \max_{z\in K,u\in\partial D}|\log|z-u|| 
\end{equation}
by integrating both sides of \eqref{eq:toRemind} against $\omega_E$ with $\nu=\omega_E$. This, in particular, yields that
\begin{equation}
\label{eq:capgreencap}
\left|\frac{1}{\cp(E,\partial D)}+\log\cp(E)\right| \leq \max_{z\in K,u\in\partial D}|\log|z-u||.
\end{equation}

\subsubsection{Weighted Capacity in the Field $-U^\nu$}
\label{sss:wcf}
Let $\nu$ be a probability Borel measure supported in $\overline\D$, $K\subset\D_r$, $r<1$, be a compact non-polar set, and $D$ be the unbounded component of the complement of $K$. Further, let $U^\nu(z)=-\int\log|1-z\bar u|d\nu(u)$ as defined in \eqref{eq:unu}. It is immediate to see that $U^\nu=V_*^{\nu^*}$, where, as usual, $\nu^*$ is the reflection of $\nu$ across $\T$. In particular, it follows from \eqref{eq:weqspot}, \eqref{eq:toRemind1}, and the characterization of the weighted equilibrium distribution that
\begin{equation}
\label{eq:whnustar}
\widehat\nu^* = \omega_{K,-U^\nu},
\end{equation}
where $\widehat\nu^*$ is the balayage of $\nu^*$ onto $\partial D$ relative to $D$. Thus, $\omega_{K,-U^\nu}$ is supported on the outer boundary of $K$ and remains the same for all sets whose outer boundaries coincide up to a polar set. In another connection, it holds that
\[
U^\nu(z) = -\int\log|1-z/u|d\nu^*(u) = -\int\log|1-z/u|d\widetilde\nu^*(u) = V^{\widetilde\nu^*}(z) - V^{\widetilde\nu^*}(0)
\]
for any $z\in\D_r$ by \eqref{balayageh} and harmonicity of $\log|1-z/u|$ as a function of $u\in\D_r^*$, where $\widetilde\nu^*$ is the balayage of $\nu^*$ onto $\T_r$. It is also true that $\widehat\nu^* = \widehat{\widetilde{\nu^*}}$ by \eqref{eq:balinsteps}. Thus,
\[
I_\nu[K] = I[\widehat\nu^*] - 2\int V^{\widetilde\nu^*}d\widehat\nu^* + 2V^{\widetilde\nu^*}(0),
\]
where $I_\nu[K]$ was defined\footnote{In \eqref{eq:wcap} we slightly changed the notation comparing to Section~\ref{sss:wc}. Clearly, $I_\nu[\cdot]$ and $\cp_\nu(\cdot)$ should be $I_{-U^\nu}[\cdot]$ and $\cp_{-U^\nu}(\cdot)$. Even though this change is slightly ambiguous, it greatly alleviates the notation throughout the paper.} in \eqref{eq:wcap}. Using the harmonicity of  $V^{\widehat\nu^*}+g_D(\cdot,\infty)$ in $D$ and continuity at regular points of $\partial D$, \eqref{balayageh}, the Fubini-Tonelli theorem, and \eqref{eq:toRemind}, we obtain that
\begin{eqnarray}
I_\nu[K] &=&  \int\left(V^{\widehat\nu^*}(z) + g_D(z,\infty)\right)d\widehat\nu^*(z)  - 2\int V^{\widehat\nu^*}d\widetilde\nu^* + 2V^{\widetilde\nu^*}(0) \nonumber \\
{} &=& \int\left(V_D^{\widetilde\nu^*}(z) - V^{\widetilde\nu^*}(z) - c(\widetilde\nu^*;D) + g_D(z,\infty)\right)d\widetilde\nu^*(z) + 2V^{\widetilde\nu^*}(0) \nonumber \\
\label{eq:weightgreen}
{} &=& I_D[\widetilde\nu^*] - I[\widetilde\nu^*] + 2V^{\widetilde\nu^*}(0).
\end{eqnarray}
Equation \eqref{eq:weightgreen}, in particular, means that the problem of maximizing  $I_\nu[\cdot]$ among the sets in $\D_r$ is equivalent to the problem of maximizing the Green energy of $\widetilde\nu^*$ among the domains with boundary in $\D_r$.

\subsubsection{Weak$^*$ Convergence and Convergence in Capacity}
\label{sss:wsccic}

By a theorem of F. Riesz, the space of complex continuous functions on $\overline{\C}$, endowed with the {\it sup} norm, has dual the space of complex measures on $\overline{\C}$ normed with the mass of the total variation
(the so-called strong topology for measures). We say that a sequence of Borel measures $\{\omega_n\}$ on $\overline{\C}$ converges \emph{weak$^*$} to a Borel measure $\omega$ if $\int fd\omega_n\to\int fd\omega$ for any complex continuous function $f$ on $\overline{\C}$. By the Banach-Alaoglu theorem, any bounded sequence of measures has a subsequence that converges in the weak$^*$ sense. Conversely, by the Banach-Steinhaus theorem, a weak$^*$ converging sequence is bounded.
 
We shall denote weak$^*$ convergence by the symbol $\cws$. Weak$^*$ convergence of measures implies some convergence properties of logarithmic and spherical logarithmic potentials, which we mention below.

The following statement is known as the \emph{Principle of Descent} \cite[Thm. I.6.8]{SaffTotik}. Let $\{\omega_n\}$ be a sequence of probability measures all having support in a fixed compact set. Suppose that $\omega_n\cws\omega$ and $z_n\to z$, $z_n,z\in\C$. Then
\[
V^\omega(z) \leq \liminf_{n\to\infty}V^{\omega_n}(z_n) \quad \mbox{and} \quad I[\omega] \leq \liminf_{n\to\infty}I[\omega_n].
\]
Weak$^*$ convergence of measures entails some convergence in capacity of their spherical potentials. This is stated rather informally in \cite[Sec. 3 and 4]{GRakh87}, but the result is slightly  subtle because, as examples  show, convergence in capacity generally occurs outside the support of the limiting measure only. A precise statement is as 
follows.

\begin{lem}
\label{lem:cwssimcic}
Let $\{\omega_n\}$ be a sequence of positive Borel measures such that $\omega_n\cws\omega$. Then $V_*^{\omega_n}\cic V_*^\omega$ in $\overline\C\setminus\supp(\omega)$. In particular, if $\omega$ is the zero measure, then the spherically normalized potentials $V_*^{\omega_n}$ converge to zero in capacity in the whole extended complex plane.
\end{lem}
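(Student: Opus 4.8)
The plan is to reduce the statement to a uniform (in $n$) lower-semicontinuity estimate for the spherical kernel $k(z,u)$ combined with an upper estimate that holds off the support of $\omega$, and then invoke standard covering arguments to upgrade pointwise control to control in capacity. First I would fix a compact set $F\subset\overline\C\setminus\supp(\omega)$ and set $d:=\dist(F,\supp(\omega))>0$. Since $\omega_n\cws\omega$, the sequence $\{\omega_n\}$ is bounded in total mass, say by $M$; by tightness of weak$^*$-convergent sequences in $\overline\C$, for any $\varepsilon>0$ there is a compact neighborhood $W$ of $\supp(\omega)$, with $\dist(F,W)\geq d/2$, such that $\omega_n(\overline\C\setminus W)<\varepsilon$ for all large $n$ (using $\limsup_n\omega_n(\overline\C\setminus W)\le\omega(\overline\C\setminus W)=0$ for $W$ open, then shrinking to a compact core). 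Splitting $V_*^{\omega_n}(z)=\int_W k(z,u)\,d\omega_n(u)+\int_{\overline\C\setminus W}k(z,u)\,d\omega_n(u)$ for $z\in F$, the first integrand is a continuous function of $(z,u)$ on $F\times W$ (the spherical kernel $k$ is continuous where $z\ne u$, cf. \eqref{eq:sphpot}), so it converges \emph{uniformly} on $F$ to $\int_W k(z,u)\,d\omega(u)=V_*^\omega(z)$ — here one uses that $k(\cdot,\cdot)$ restricted to $F\times W$ is uniformly continuous and $\omega_n|_W\cws\omega|_W$ (no mass escaping the compact set $W$). This handles the ``good'' part with genuine uniform convergence, hence a fortiori convergence in capacity.

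The remaining issue is the ``bad'' part $R_n(z):=\int_{\overline\C\setminus W}k(z,u)\,d\omega_n(u)$, whose total mass is small but which can be very negative (or large positive) near $\supp(\omega_n)$, which may wander into $\overline\C\setminus W$. The key observation is that $R_n$ is the spherical potential of the measure $\mu_n:=\omega_n|_{\overline\C\setminus W}$ with $\mu_n(\overline\C)<\varepsilon$, and the set where a spherical potential of a measure of small mass is large has small capacity — this is the quantitative heart of the argument. Concretely, for any $\lambda>0$,
\[
\cp\bigl(\{z:|R_n(z)|\ge\lambda\}\bigr)\le C\exp\{-\lambda/\varepsilon\}
\]
for an absolute constant $C$; this follows from the standard fact (see \cite[Ch. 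I]{SaffTotik}) that for a positive measure $\mu$ of mass $m$ the set $\{V_*^\mu\ge t\}$ has capacity at most $e^{-t/m}$ up to a fixed multiplicative constant coming from the spherical normalization (the part $\{V_*^\mu\le -t\}$ is controlled similarly since $k$ is bounded below by a fixed constant on compact sets once one exploits that $k(z,u)=-\log|z-u|$ or $-\log|1-z/u|$ is bounded below by $-\log(\diam+2)$-type quantities on $\overline\C$ — more carefully, the negative part of $k$ is uniformly bounded on all of $\overline\C\times\overline\C$ because of the spherical renormalization, so $R_n$ is bounded below uniformly and only the upper excursions need the capacity estimate). Given $\varepsilon$ as small as we like, this makes $\cp(\{z\in F:|R_n(z)|\ge\lambda\})$ as small as we like, uniformly in large $n$, for any fixed $\lambda>0$.

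Putting the two parts together: given $\eta>0$ and $\lambda>0$, choose $\varepsilon$ small enough that the bad-part exceptional set has capacity $<\eta$ for all large $n$, and choose $n$ large enough that the good part is within $\lambda/2$ of $V_*^\omega$ uniformly on $F$; then $\{z\in F:|V_*^{\omega_n}(z)-V_*^\omega(z)|\ge\lambda\}$ is contained in the bad-part exceptional set (up to enlarging $\lambda$ to $\lambda/2$), hence has capacity $<\eta$. This is exactly $V_*^{\omega_n}\cic V_*^\omega$ on $F$, and since $F$ was an arbitrary compact subset of $\overline\C\setminus\supp(\omega)$ this gives convergence in capacity there. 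For the last sentence of the lemma, when $\omega$ is the zero measure one has $\supp(\omega)=\varnothing$, so the conclusion holds on all of $\overline\C$; alternatively, $W$ can simply be taken empty and $R_n=V_*^{\omega_n}$ with $\omega_n(\overline\C)\to0$, so the capacity estimate alone finishes it. The main obstacle I anticipate is the careful bookkeeping in the bad-part estimate — specifically, making precise the ``capacity of a superlevel set of a small-mass potential is small'' claim with the spherical kernel and in a way that is uniform in $n$, including controlling the (bounded but nonzero) negative excursions of the spherical kernel; everything else is either uniform continuity on compacta or a routine $\varepsilon$--$\eta$ assembly.
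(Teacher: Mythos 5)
Your strategy is viable and genuinely different from the paper's. The paper first treats sequences converging \emph{strongly} to the zero measure with supports in a fixed compact set, bounding $\cp(\{V_D^{\omega_n}>\epsilon\})$ by comparing the Green potential with the Green equilibrium potential of the superlevel set on a curve $L$ where $V_D^{\omega_n}\to0$ uniformly; it then transfers the conclusion to logarithmic and spherical potentials via balayage, handles unbounded supports by the inversion $z\mapsto1/z$, and finally reduces the general case to the zero-measure case by peeling off $\omega_n|_{F_{\varepsilon_n}}$ away from $\supp(\omega)$ and treating the remainder by continuity of the kernel. Your decomposition into a ``good'' part near $\supp(\omega)$ (kernel continuity, uniform convergence) and a ``bad'' small-mass part is the same idea executed in one pass, and your quantitative substitute --- the Frostman-type bound $\cp(\{V_*^{\mu}\ge t\})\le Ce^{-t/m}$ obtained from $V^{\omega_E}\le-\log\cp(E)$ and Fubini --- is correct for superlevel sets contained in a fixed bounded region (the constant depends on that region because of the $\log|u|$ correction in the kernel for $|u|>1$), and is arguably more economical than the Green-potential detour. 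The minor boundary-mass issue in asserting $\omega_n|_W\cws\omega$ is handled by the tightness estimate you already have.

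There is, however, one concretely false supporting claim: that ``the negative part of $k$ is uniformly bounded on all of $\overline\C\times\overline\C$.'' It is not: for $|u|\le1$ one has $k(z,u)=-\log|z-u|\to-\infty$ as $z\to\infty$, so whenever $\mu_n=\omega_n|_{\overline\C\setminus W}$ carries positive mass at finite points, your remainder $R_n$ has unbounded downward excursions near $\infty$ (and the ``good'' kernel is likewise discontinuous at $z=\infty$). The lower bound on $k(z,\cdot)$ you need holds only for $z$ in a compact subset of $\C$, so your argument as written yields convergence in capacity on compact subsets of $\C\setminus\supp(\omega)$ but not in a neighborhood of $\infty$ --- which is required both when $\infty\notin\supp(\omega)$ and, crucially, for the final assertion with $\omega=0$. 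The gap is repairable exactly as in the paper: pass to $w=1/z$ and use the identity $V_*^{\mu_n}(1/w)=V_*^{\tilde\mu_n}(w)+|\mu_n|\log|w|$ of \eqref{refsper}; the first term is covered by your compact-set argument applied to the reciprocal measures, and the second satisfies $\cp(\{w:\,|\mu_n|\,|\log|w||\ge\lambda\})\le e^{-\lambda/\varepsilon}$, which is small. With that patch the proof goes through.
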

\begin{proof}
Suppose first that $\omega_n$ converges weak$^*$ to the zero measure. Then the convergence is actually strong.
Assume moreover that the measures $\omega_n$ are supported on a fixed compact set $K\subset\C$. Let $G$ be a simply connected domain that contains $K$, $L$ be a Jordan curve that contains the closure of $G$ in its interior, and $D$ be a bounded simply connected domain that contains $L$. Fix $\epsilon>0$ and define $E_n:=\{z\in D:~V_D^{\omega_n}(z)>\epsilon\}$. By superharmonicity of $V_D^{\omega_n}$ the set $E_n$ is open,  and we 
can assume $E_n\subset G$ by taking $n$ large enough. If $E_n$ is empty then $\cp(E_n)=0$, otherwise
let $E\subset E_n$ be a nonpolar compact set. Then the Green equilibrium potential $V_D^{\omega_{(E,\partial D)}}$ is bounded above by $1/\cp(E,\partial D)$ \cite[Thm. 5.11]{SaffTotik} which is finite. Hence $h:=V_D^{\omega_n}-\varepsilon\cp(E,\partial D) V_D^{\omega_{(E,\partial D)}}$ is superharmonic and bounded below in in $D\setminus E$, with $\liminf h(z)\geq0$ as $z$ tends to $\partial E\cup\partial D$. By the minimum principle, we thus have 
\[
V_D^{\omega_n} \geq \epsilon\cp(E,\partial D) V_D^{\omega_{(E,\partial D)}} \quad \mbox{in} \quad D\setminus E.
\]
Set
\[
m := \min_{u\in\overline G}\min_{z\in L}g_D(z,u)>0.
\]
Clearly, $V_D^{\omega_{(E,\partial D)}}(z)>m$, $z\in L$, thus
\[
V_D^{\omega_n} \geq \epsilon m\cp(E,\partial D) \quad \mbox{on} \quad L.
\]
Hence, in view of \eqref{eq:capgreencap} applied with $K=\overline{G}$, we get
\[
-\log\cp(E_n)=-\sup_{E\subset E_n}\log\cp(E) \geq \frac{\epsilon m}{\sup_L V_D^{\omega_n}}-C
\]
where $C$ is independent of $n$. Using the uniform convergence to 0 of $V_D^{\omega_n}$ on $L$, we get that $\cp(E_n)\to0$ as $n\to\infty$, that is, $V_D^{\omega_n}\cic0$ in $D$.  Let, as usual, $\widehat\omega_n$ be the balayage of $\omega_n$ onto $\partial D$. Since $|\widehat\omega_n|=|\omega_n|\to0$ as $n\to\infty$, we have that $V^{\widehat\omega_n}\to0$ locally uniformly in $D$. Combining this fact with \eqref{eq:toRemind}, we get that $V^{\omega_n}\cic0$ in $D$. Let $u$ be an arbitrary point in $G$. Then $\{V^{\omega_n}+|\omega_n|\log|\cdot-u|\}$ is a sequence of harmonic functions in $\overline\C\setminus G$. It is easy to see that this sequence converges uniformly to 0 there. As $|\omega_n|\log|\cdot-u|\cic0$ in $\overline\C$, we deduce that $V^{\omega_n}\cic0$ in the whole extended complex plane and so does $V_*^{\omega_n}=V^{\omega_n}+\int\log^+|u|d\omega_n$ 
({\it cf.} \eqref{eq:sphpot}) since $\supp(\omega_n)\subset K$.

Next, let $\{\omega_n\}$ be an arbitrary sequence of positive measures that converges to the zero measure. As the restriction ${\omega_n}_{|{\overline{\D}}}$ converges to zero, we may assume by the first part of the proof that $\supp(\omega_n)\subset \overline{\Om}$. It can be easily seen from the definition of the spherical potential \eqref{eq:sphpot} that
\begin{equation}
\label{refsper}
V_*^{\omega_n}(1/z) = V_*^{\tilde\omega_n}(z)+|\omega_n|\log|z|, \quad z\in\C\setminus\{0\},
\end{equation}
where $\tilde\omega_n$ is the reciprocal measure of $\omega_n$, i.e., $\tilde\omega_n(B)=\omega_n(\{z:1/z\in B\})$ for any Borel set $B$. 
Clearly $\tilde\omega_n\to0$ and $\supp(\tilde\omega_n)\subset\overline{\D}$,
thus from the first part of the proof we get $V_*^{\tilde\omega_n}\cic0$.
Since $|\omega_n|\to0$, we also see by inspection that 
$|\omega_n|\log|z|\cic0$.
Therefore, by \eqref{refsper}, we obtain that $V_*^{\omega_n}(1/z)\cic0$
which is equivalent to $V_*^{\omega_n}\cic0$.

Let now $\{\omega_n\}$ be a sequence of positive measures converging weak$^*$ to some Borel measure $\omega\neq0$. If $\supp(\omega)=\overline{\C}$, there is nothing to prove. Otherwise, to each $\varepsilon>0$, we set 
$F_\varepsilon:=\{z\in\overline{\C}:~d_c(z,\supp(\omega))\geq\varepsilon\}$ where $d_c$ is the chordal distance on the Riemann sphere. Pick a continuous function $f$, with $0\leq f\leq1$, which is identically $1$ on $F_\varepsilon$ and supported in $F_{\varepsilon/2}$.By the positivity of $\omega_n$ and its weak$^*$ convergence to $\omega$,
we get
\[
0\leq \lim_{n\to+\infty}\omega_n(F_\varepsilon) \leq \lim_{n\to+\infty}\int f\,d\omega_n=\int f \,d\omega =0.
\]
From this, it follows easily that if $\varepsilon_n\to0$ slowly enough, then the restriction $\omega_n^1:={\omega_n}_{|F_{\varepsilon_n}}$ converges strongly to the zero measure. Therefore $V_*^{\omega^1_n}\cic0$ in $\overline\C$ by the previous part of  the proof. Now, put $\omega^2_n:=\omega_n-\omega_n^1={\omega_n}_{|\overline{\C}\setminus F_{\varepsilon_n}}$. For fixed $z\in \C\setminus\supp(\omega)$, the function $k(z,u)$ from \eqref{eq:sphpot} is continuous on a neighborhood of $\overline{\C}\setminus F_{\varepsilon_n}$ for all $n$ large enough. Redefining $k(z,u)$ near $z$ to make it continuous does not change its integral against $\omega$ 
nor $\omega_n^2$, therefore $V_*^\omega(z)-V_*^{\omega_n^2}(z)\to0$ as $n\to+\infty$ since $\omega^2_n\cws\omega$. Moreover, it is straightforward to check from the boundedness of $|\omega^2_n|$ that the convergence is locally uniform with respect to $z\in\C$. Finally, if $\supp(\omega)$ is bounded, we observe that when $z\to\infty$
\[
V_*^\omega(z)-V_*^{\omega_n^2}(z)\sim\log|z|\left(\omega_n^2(\overline{\C}) - \omega(\overline{\C})\right)+\int\log^+|u|\,d\omega - \int \log^+|u| \,d\omega_n^2
\]
which goes to zero in capacity since $\omega_n^2(\overline{\C})\to \omega(\overline{\C})$ and $\log^+$ is continuous in a neighborhood of both $\supp(\omega)$ and $\supp(\omega^2_n)$ for $n$ large enough. This finishes the proof of the lemma.
\end{proof}

The following lemma is needed for the proof of Theorem~\ref{thm:convcap}.

\begin{lem}
\label{lem:cic}
Let $D$ be a domain in $\overline\C$ and $\{A_n\}$ be a sequence of holomorphic functions in $D$ such that $|A_n|^{1/n}\cic c$ in $D$ as $n\to\infty$ for some constant $c$. Then $\limsup_{n\to\infty}|A_n|^{1/n} \leq c$ uniformly on closed subsets of $D$.
\end{lem}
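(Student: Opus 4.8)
The plan is to work with the subharmonic functions $v_n:=\tfrac1n\log|A_n|$ on $D$ and to prove $\limsup_n v_n\le\log c$ locally uniformly (with the convention $\log 0=-\infty$; the conclusion $|A_n|^{1/n}\cic c$ of Theorem~\ref{thm:convcap} is the hypothesis here). It suffices to fix $\varepsilon>0$ and a compact $E\subset D$ and show $\limsup_n\sup_E v_n\le\log(c+\varepsilon)$, then let $\varepsilon\downarrow0$; if $D$ is unbounded one first moves $\infty$ to a finite point by an inversion, as in the convention of Section~\ref{sss:lc}. Write $S_{n,\varepsilon}:=\{z\in D:\ |A_n(z)|^{1/n}>c+\varepsilon\}$, so that $\cp(S_{n,\varepsilon})\to0$ as $n\to\infty$; by outer regularity of capacity we may assume each $S_{n,\varepsilon}$ open.

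First I would fix bounded domains $\Omega_1\Subset\Omega_2$ with analytic boundaries, $E\subset\Omega_1$ and $\overline{\Omega_2}\subset D$, and apply the two-constants theorem to the subharmonic function $v_n$ on $U_n:=\Omega_2\setminus\overline{S_{n,\varepsilon}}$. On the part of $\partial U_n$ lying on $\partial\Omega_2$ one has $v_n\le\log(c+\varepsilon)$ (we are off $S_{n,\varepsilon}$), while on the part lying on $\partial S_{n,\varepsilon}$ one has $v_n\le M_n:=\max_{\overline{\Omega_2}}v_n=\max_{\partial\Omega_2}v_n$ by the maximum modulus principle; hence
\[
v_n(z)\le\log(c+\varepsilon)+\bigl(M_n-\log(c+\varepsilon)\bigr)^{+}\,\omega\bigl(z,\partial S_{n,\varepsilon};U_n\bigr),\qquad z\in U_n .
\]
Since the harmonic measure of $\partial S_{n,\varepsilon}$ seen from points at a fixed positive distance from $S_{n,\varepsilon}$ is $O\!\left(1/\log\bigl(1/\cp(S_{n,\varepsilon})\bigr)\right)\to0$, this inequality yields $\limsup_n\sup_E v_n\le\log(c+\varepsilon)$ provided $M_n$ does not grow exponentially, i.e. $\limsup_n M_n<\infty$. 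To obtain this a priori bound I would iterate the estimate over an increasing family of such domains exhausting a neighborhood of $E$, using in addition that $\cp(S_{n,\varepsilon})\to0$ forces the Lebesgue area of $S_{n,\varepsilon}$ to tend to $0$ (\cite{Ransford}) together with the solid sub-mean value inequality, so as to preclude that the supremum of $v_n$ is attained deep inside the (vanishing) bad set. Once a uniform bound $v_n\le C$ on a neighborhood of $E$ is available, an equivalent route is to note that the upper semicontinuous regularization $w:=(\limsup_n v_n)^{*}$ of a locally uniformly bounded-above sequence of subharmonic functions is subharmonic, that $w\le\log(c+\varepsilon)$ quasi-everywhere (the set $\{w>\log(c+\varepsilon)\}$ being contained in the upper limit of the sets $S_{n,\varepsilon}$), hence $w\le\log(c+\varepsilon)$ everywhere by subharmonicity, and then a Hartogs-type argument converts this into the desired locally uniform bound on the $v_n$.

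The main obstacle is precisely the a priori estimate $\limsup_n\sup_E v_n<\infty$. Convergence in capacity controls $v_n$ only off a set of small but in general non-polar capacity, so the maximum principle cannot be applied naively on $\Omega_2$; the work lies in excising an open neighborhood of the bad set, controlling its harmonic measure through its capacity, and ruling out concentration of $v_n$ on it via its vanishing area. Everything else — the two-constants theorem, subharmonicity of the regularized $\limsup$, and the Hartogs-type passage from a quasi-everywhere bound to a locally uniform one — is routine potential theory.
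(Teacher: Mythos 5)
Your overall strategy (excise the bad set $S_{n,\varepsilon}$ and compare $v_n=\frac1n\log|A_n|$ with the harmonic measure of the excised boundary) could in principle be completed, but as written it has a genuine gap exactly where you locate ``the main obstacle'': the a priori bound $\limsup_n M_n<\infty$, $M_n=\max_{\overline{\Omega_2}}v_n$, is never established, and the route you sketch for it is circular. The solid sub-mean value inequality bounds $v_n(z_0)$ (or $|A_n(z_0)|^{2/n}$) by an area average that splits into a contribution from the good set, which is at most $\log(c+\varepsilon)$, and a contribution from $B(z_0,r)\cap S_{n,\varepsilon}$, which is at most the area of the bad set times $\sup_{S_{n,\varepsilon}}v_n^{+}$ --- precisely the quantity you are trying to bound. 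Convergence in capacity gives no pointwise control whatsoever on $S_{n,\varepsilon}$, and the vanishing of its area does not help unless you already know that $v_n$ cannot blow up faster than the area shrinks. Likewise, the maximum principle gives $M_n=\max_{\partial\Omega_2}v_n$, but for a fixed $\Omega_2$ the boundary may meet $S_{n,\varepsilon}$, so this too is uncontrolled. The same objection applies to your alternative route via the regularized $\limsup$ and Hartogs, which also presupposes a locally uniform upper bound on the $v_n$.

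The missing idea --- and the only thing actually needed --- is to choose, for each large $n$, a closed curve in $D$ that surrounds the compact set in question and avoids $S_{n,\varepsilon}$ entirely; then the maximum principle over the region it bounds gives $v_n\le\log(c+\varepsilon)$ there at once, and the two-constants theorem, the harmonic-measure estimate, and the a priori bound all become superfluous. This is exactly what the paper does: working in an annulus $2x<|w-z|<3x$ about a point $z$, it uses that the radial projection $w\mapsto|w-z|$ is a contraction, so $\cp(\{|w-z|:\,w\in S_{n,\varepsilon}\})\le\cp(S_{n,\varepsilon})\to0$, which forces the linear measure of the bad radii to vanish; one then picks a good radius $y_n\in(2x,3x)$, i.e., a circle $L_n$ on which $|A_n|^{1/n}\le c+\varepsilon$, and concludes by Cauchy's formula (equivalently, the maximum principle) on the disk it bounds. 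Your phrase about ``iterating over an increasing family of domains'' gestures at this selection of a good boundary, but you never extract one, and without it the argument does not close.
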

\begin{proof}
By the maximum principle, it is enough to consider only compact subsets of $D$ and therefore it is sufficient to consider closed disks. Let $z\in D$ and $x>0$ be such that the closure of $D_{3x}:=\{w:|w-z|<3x\}$ is contained in $D$. We shall show that $\limsup_{n\to\infty}\|A_n\|_{\overline D_x}^{1/n}\leq c$.

Fix $\epsilon>0$. As $|A_n|^{1/n}\cic c$ on $\overline D_{3x}\setminus D_{2x}$, there exists $y_n\in(2x,3x)$ such that $|A_n|^{1/n}\leq c+\epsilon$ on $L_n:=\{w:|w-z|=y_n\}$ for all $n$ large enough. Indeed, define $S_n:=\{w\in \overline D_{3x}\setminus D_{2x}:||A_n(w)|^{1/n}-c|>\epsilon\}$. By the definition of convergence in capacity, we have that $\cp(S_n)\to0$ as $n\to\infty$. Further, define $S_n^\prime:=\{|w-z|:w\in S_n\}\subset[2x,3x]$. Since the mapping $w\mapsto|w-z|$ is contractive, $\cp(S_n^\prime)\leq\cp(S_n)$ by \cite[Thm. 5.3.1]{Ransford} and therefore $\cp(S_n^\prime)\to0$ as $n\to\infty$. The latter {\it a fortiori} implies that $|S_n^\prime|\to0$ as $n\to\infty$ by \cite[Thm. 5.3.2(c)]{Ransford}, where $|S_n^\prime|$ is the Lebesgue measure of $S_n^\prime$. Thus, $y_n$ with the claimed properties always exists for all $n$ large enough. Using the Cauchy integral formula, we get that
\[
\|A_n\|_{\overline D_x}^{1/n} \leq \left(\frac{\max_{L_n}|A_n|}{2\pi x}\right)^{1/n} \leq \frac{c+\epsilon}{\sqrt[n]{2\pi x}}.
\]
As $x$ is fixed and $\epsilon$ is arbitrary, the claim of the lemma follows.
\end{proof}

\subsection{Green Potentials}
\label{ss:gp}
In this section, we prove some facts about Green potentials that we used throughout the paper. We start from the following useful fact.

Let $D_1$ and $D_2$ be two domains with non-polar boundary and $\omega$ be a Borel measure supported in $D_1\cap D_2$. \emph{Then $V_{D_1}^\omega-V_{D_2}^\omega$ is harmonic in $D_1\cap D_2$.}

Clearly, this claim should be shown only on the support of $\omega$. Using the conformal invariance of Green potentials, it is only necessary to consider measures with compact support. Denote by $\widehat\omega$ and $\widetilde\omega$ the balayages of $\omega$ onto $\partial D_1$ and $\partial D_2$, respectively. Since $V_{D_1}^\omega=V^{\omega-\widehat\omega}+c(\omega;D_1)$ and $V_{D_2}^\omega=V^{\omega-\widetilde\omega}+c(\omega;D_2)$ by \eqref{eq:toRemind} and $V^{\widehat\omega}$ and $V^{\widetilde\omega}$ are harmonic on $\supp(\omega)$, it follows that $V_{D_1}^\omega-V_{D_2}^\omega=V^{\widetilde\omega-\widehat\omega}+c(\omega;D_1)-c(\omega;D_2)$ is also harmonic there.

\subsubsection{Normal derivatives}
Throughout this section, $\partial/\partial\n_i$ (resp. $\partial/\partial\n_o$) will stand for the partial derivative with respect to the inner (resp. outer) normal on the corresponding curve. 

\begin{lem}
\label{lem:nder0}
Let $L$ be a $C^1$-smooth Jordan curve in a domain $D$ and $V$ be a continuous function in $D$. If $V$ is harmonic in $D\setminus L$, extends continuously to the zero function on $\partial D$ and to $C^1$-smooth functions on each side of $L$, then $V=-V_D^\sigma$, where $\sigma$ is a signed Borel measure on $L$ given by
\[
d\sigma = \frac{1}{2\pi}\left(\frac{\partial V}{\partial\n_i}+\frac{\partial V}{\partial\n_o}\right)ds
\]
and $ds$ is the arclength differential on $L$.
\end{lem}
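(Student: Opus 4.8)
The plan is to identify $V$ with the Green potential of a measure on $L$ by a standard Riesz-type argument: compute the distributional Laplacian of $V$ across $L$, recognize it as a surface measure supported on $L$, and then use uniqueness of the solution of the Dirichlet problem with that forcing term and zero boundary data. First I would set $\sigma$ to be the signed measure on $L$ with density $\tfrac{1}{2\pi}\bigl(\partial V/\partial\n_i+\partial V/\partial\n_o\bigr)$ with respect to arclength, as in the statement. Since $V$ is harmonic in $D\setminus L$ and $C^1$-smooth up to each side of $L$, a direct application of Green's identity against an arbitrary test function $\phi\in C_c^\infty(D)$ — integrating by parts separately on the two components of $D\setminus L$ and collecting the boundary terms along $L$ — shows that $\Delta V = -2\pi\sigma$ in the sense of distributions on $D$ (the boundary terms on $\partial D$ vanish because $V$ extends continuously to $0$ there, provided one first does this computation on slightly shrunken domains exhausting $D$, or invokes the continuity of $V$ up to $\partial D$).

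Next I would observe that $V_D^\sigma$, the Green potential of $\sigma$ relative to $D$, satisfies $\Delta V_D^\sigma = 2\pi\sigma$ distributionally (from $\Delta_z g_D(z,u)=-2\pi\delta_u$, with the usual sign convention tied to $V^{\delta_u}=-\log|\cdot-u|$ and superharmonicity), and that $V_D^\sigma$ vanishes quasi-everywhere on $\partial D$. Hence $h:=V+V_D^\sigma$ is harmonic in all of $D$ (its distributional Laplacian vanishes and it is locally integrable, so by Weyl's lemma it has a harmonic representative), bounded near $\partial D$, and tends to $0$ at quasi-every boundary point. One should check here that $\sigma$ has finite Green energy so that $V_D^\sigma$ is a bona fide Green potential with the expected boundary behavior: this follows because the density of $\sigma$ is bounded (it is $C^0$ on the compact $C^1$ curve $L$), so $\sigma$ is a finite measure with bounded potential. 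Then the generalized minimum principle (as recorded in Section~\ref{ss:balayage}, via \cite[Thm. I.2.4]{SaffTotik}) applied to $h$ and to $-h$ forces $h\equiv 0$, i.e. $V=-V_D^\sigma$, which is the claim.

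The main obstacle, and the step requiring the most care, is the distributional Laplacian computation across $L$ together with the boundary behavior on $\partial D$: one must justify the integration by parts in each subdomain of $D\setminus L$, correctly track the orientation of the inner versus outer normals on $L$ (so that the two one-sided normal derivatives add rather than cancel), and handle the fact that $\partial D$ may be irregular in the sense that $V$ only extends continuously — not $C^1$ — there. The clean way around the latter is to avoid ever integrating by parts against $\partial D$: take test functions compactly supported in $D$ for the Laplacian computation, which determines $V+V_D^\sigma$ as harmonic in $D$, and only afterwards invoke the minimum principle for the boundary values, where mere continuous vanishing q.e.\ suffices. One should also note the computation is local near $L$, so the smoothness hypotheses ($L$ is $C^1$, $V$ is $C^1$ up to each side) are exactly what is needed to make the one-sided normal derivatives, and hence the density of $\sigma$, well-defined and continuous.
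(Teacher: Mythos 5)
Your proposal is correct and follows essentially the same route as the paper: Green's identity applied separately on the two components of $D\setminus L$ against test functions compactly supported in $D$ identifies the distributional Laplacian of $V$ with a multiple of $\sigma$, and then Weyl's lemma together with the vanishing of $V$ and $V_D^\sigma$ on $\partial D$ yields $V=-V_D^\sigma$. The only slip is that both of your stated signs are flipped --- the boundary-term computation actually gives $\Delta V=+2\pi\sigma$ while $\Delta_z g_D(z,u)=-2\pi\delta_u$ gives $\Delta V_D^\sigma=-2\pi\sigma$ --- but the two errors cancel, so $V+V_D^\sigma$ is still harmonic and the conclusion is unaffected.
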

\begin{proof}
As discussed just before this section, the distributional Laplacian of $-V_D^\sigma$ in $D$ is equal to $2\pi\sigma$. Thus, according to Weyl's Lemma and the fact that $V=V_D^\sigma\equiv0$ on $\partial D$, we only need to show that $\Delta V=2\pi\sigma$. By the very definition of the distributional Laplacian, it holds that
\begin{equation}
\label{eq:weakL1}
\iint_D \phi\Delta V dm_2 = \iint_D V\Delta\phi dm_2 = \iint_{O}V\Delta\phi dm_2 + \iint_{D\setminus\overline O} V\Delta\phi dm_2,
\end{equation}
for any infinitely smooth function $\phi$ compactly supported in $D$, where $O$ is the interior domain of $L$ and $dm_2$ is the area measure. According to Green's formula (see \eqref{eq:greenformula} further below) it holds that
\begin{equation}
\label{eq:weakL2}
\iint_{O}V\Delta\phi dm_2 = \iint_{O}\Delta V\phi dm_2 + \int_L\left(\phi\frac{\partial V}{\partial\n_i}-V\frac{\partial\phi}{\partial\n_i}\right)ds = \int_L\left(\phi\frac{\partial V}{\partial\n_i}-V\frac{\partial\phi}{\partial\n_i}\right)ds
\end{equation}
as $V$ is harmonic in $O$. Analogously, we get that
\begin{equation}
\label{eq:weakL3}
\iint_{D\setminus\overline O} V\Delta\phi dm_2 = \int_L\left(\phi\frac{\partial V}{\partial\n_o}-V\frac{\partial\phi}{\partial\n_o}\right)ds,
\end{equation}
where we also used the fact that $\phi\equiv0$ in some neighborhood of $\partial D$. Combining \eqref{eq:weakL2} and \eqref{eq:weakL3} with \eqref{eq:weakL1} and observing that $\partial\phi/\partial\n_i=-\partial\phi/\partial\n_o$ yield
\[
\iint_D \phi\Delta V dm_2 = \int_L \left(\frac{\partial V}{\partial\n_i}+\frac{\partial V}{\partial\n_o}\right) ds = 2\pi\int_L\phi d\sigma.
\]
That is, $\Delta V=2\pi\sigma$, which finishes the proof of the lemma.
\end{proof}

\begin{lem}
\label{lem:nder1}
Let $F$ be a regular compact set and $G$ a simply connected neighborhood of $F$. Let also $V$ be a continuous function in $G$ that is harmonic in $G\setminus F$ and is identically zero on $F$. If $L$ is an analytic Jordan curve in $G$ such that $V\equiv\delta>0$ on $L$, then
\[
\frac{1}{2\pi}\int_L \frac{\partial V}{\partial\n_i} ds = -\delta\cp(F\cap\Omega,L),
\]
where $\Omega$ is the inner domain of $L$.
\end{lem}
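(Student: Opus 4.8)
Set $E:=F\cap\Omega$. The plan is to recognize $V$ on $\overline\Omega$ as a constant minus a multiple of the Green equilibrium potential of $E$ relative to $\Omega$, and then to compute the flux of $V$ through $L$ via the Gauss formula for Green functions. I begin with a few reductions. Since $V\equiv0$ on $F$ while $V\equiv\delta>0$ on $L$, we have $F\cap L=\varnothing$, so $E$ is compact (because $\overline{F\cap\Omega}\subset F\cap\overline\Omega=F\cap\Omega$) and $V$ is harmonic in a neighbourhood of $L$; moreover $\overline\Omega\subset G$ since $G$ is simply connected, so $V$ is continuous and bounded on $\overline\Omega$. As $L$ is analytic and $V$ is constant on it, $V$ continues harmonically across $L$ by reflection, so $\partial V/\partial\n_i$ exists and is continuous on $L$. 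If $E$ is polar or empty, $V$ continues harmonically across $E$ and hence is harmonic throughout $\Omega$ with boundary value $\delta$ on $L$; the maximum principle then gives $V\equiv\delta$ in $\Omega$, so $\partial V/\partial\n_i\equiv0$ on $L$ and both sides of the asserted identity vanish since $\cp(E,L)=0$. Thus I may assume $E$ non-polar.

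The first main step is the identification $V=\delta-\delta\,\cp(E,L)\,V_\Omega^{\omega_{(E,L)}}$ on $\overline\Omega$, where $\omega_{(E,L)}$ is the Green equilibrium distribution on $E$ relative to $\Omega$. I would set $h:=(\delta-V)-\delta\,\cp(E,L)\,V_\Omega^{\omega_{(E,L)}}$ and note that, $\supp(\omega_{(E,L)})$ being contained in $E$, $h$ is harmonic in $\Omega\setminus E$; it is bounded there because $V$ is bounded and $0\le V_\Omega^{\omega_{(E,L)}}\le1/\cp(E,L)$ by \cite[Thm. II.5.11]{SaffTotik}. On $L$ one has $V\equiv\delta$ and $V_\Omega^{\omega_{(E,L)}}\equiv0$, so $h\equiv0$ there; and at every regular point $x$ of $E$ (hence q.e.\ on $E$) one has $V(x)=0$ since $E\subset F$, while $V_\Omega^{\omega_{(E,L)}}(z)\to1/\cp(E,L)$, so $h(z)\to0$. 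Therefore $\liminf h\ge0$ q.e.\ on $\partial(\Omega\setminus E)\subseteq L\cup E$, and the generalized minimum principle, applied on each component of $\Omega\setminus E$, gives $h\ge0$; applying the same to $-h$ yields $h\equiv0$ in $\Omega\setminus E$, hence on $\overline\Omega$ by continuity.

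The second step is to compute the flux. Since the identification holds in $\Omega$ near $L$, it gives $\partial V/\partial\n_i=-\delta\,\cp(E,L)\,\partial V_\Omega^{\omega_{(E,L)}}/\partial\n_i$ on $L$. For each $u\in\Omega$ the measure $\tfrac1{2\pi}\bigl(\partial g_\Omega(\cdot,u)/\partial\n_i\bigr)\,ds$ on $L$ is the harmonic measure $\omega_u$ of $\Omega$ at $u$, a probability measure on $L=\partial\Omega$ (the classical Poisson-kernel formula, legitimate because $L$ is analytic). As $\supp(\omega_{(E,L)})\subset E$ lies at positive distance from $L$, I may differentiate $V_\Omega^{\omega_{(E,L)}}(\cdot)=\int g_\Omega(\cdot,u)\,d\omega_{(E,L)}(u)$ under the integral sign near $L$, and Fubini's theorem gives
\[
\frac1{2\pi}\int_L\frac{\partial V_\Omega^{\omega_{(E,L)}}}{\partial\n_i}\,ds=\int\omega_u(L)\,d\omega_{(E,L)}(u)=\int d\omega_{(E,L)}(u)=1.
\]
Combining this with the boundary relation for $\partial V/\partial\n_i$ yields $\tfrac1{2\pi}\int_L\partial V/\partial\n_i\,ds=-\delta\,\cp(E,L)=-\delta\,\cp(F\cap\Omega,L)$, as claimed.

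The step needing the most care will be the identification: one must know that two bounded harmonic functions on $\Omega\setminus E$ which agree quasi everywhere on the boundary must coincide, which is exactly the quasi-everywhere form of the maximum principle recalled earlier and is what makes the (possibly) irregular points of $E$ and the precise location of $\supp(\omega_{(E,L)})$ harmless. The other mild technical issue, the meaningfulness of the one-sided normal derivatives on $L$, is disposed of by the analyticity of $L$ and the reflection principle, exactly as in the use of this lemma inside the proof of Lemma~\ref{lem:1sym}.
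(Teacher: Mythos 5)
Your proof is correct, but it runs along a different (dual) representation than the paper's, and the flux computation is genuinely simpler. The paper identifies $V$ in $\overline\Omega$ with $\delta\,\cp(F\cap\Omega,L)\,V_D^{\omega}$ where $\omega=\omega_{(L,F\cap\Omega)}$ lives on $L$ and $D=\overline\C\setminus(F\cap\Omega)$; because the mass of $\Delta V_D^\omega$ then sits on the very curve $L$ through which one integrates, the paper must pass through the balayage decomposition \eqref{eq:toRemind}, Gauss' theorem for the logarithmic potential of $\widehat\omega$, and the jump formula recovering $d\omega$ from the sum of the one-sided normal derivatives of $V^\omega$ (steps \eqref{eq:nder0}--\eqref{eq:nder3}). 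You instead write $\delta-V=\delta\,\cp(E,L)\,V_\Omega^{\omega_{(E,L)}}$ with the equilibrium measure on the inner plate $E=F\cap\Omega$ and the Green potential taken relative to $\Omega$ itself; since $\supp(\omega_{(E,L)})$ is then compactly inside $\Omega$, the flux through $L$ is just the total mass $\omega_{(E,L)}(\Omega)=1$, read off from the harmonic-measure (equivalently, Gauss) identity $\frac1{2\pi}\int_L\partial g_\Omega(\cdot,u)/\partial\n_i\,ds=1$. The two identifications are equivalent via $\cp(E,L)=\cp(L,E)$ (\eqref{eq:exchange}), but your choice eliminates the one-sided normal derivative analysis entirely; you are also more careful than the paper about the quasi-everywhere boundary behavior in the maximum-principle step and about the degenerate case $F\cap\Omega$ polar (which in fact forces $F\cap\Omega=\varnothing$, as your own identity $V\equiv\delta$ versus $V=0$ on $F$ shows). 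No gaps.
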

\begin{proof}
It follows immediately from the maximum principle for harmonic functions, applied in $\Omega\setminus F$, that $V = \delta\cp(F\cap\Omega,L) V_D^\omega$ in $\overline\Omega$, where $D:=\overline\C\setminus(F\cap\Omega)$ and $\omega:=\omega_{(L,F\cap\Omega)}$. Thus, it is sufficient to show that
\begin{equation}
\label{eq:nder4}
\frac{1}{2\pi}\int_L \frac{\partial V_D^\omega}{\partial\n_i} ds = -1.
\end{equation}
Observe that $V_D^\omega$ can be reflected harmonically across $L$ by the assumption on $V$ and therefore normal inner derivative of $V_D^\omega$ does exist at each point of $L$. According to \eqref{eq:toRemind}, it holds that
\begin{equation}
\label{eq:nder0}
\frac{1}{2\pi}\int_L \frac{\partial V_D^\omega}{\partial\n_i} ds = \frac{1}{2\pi}\int_L \frac{\partial V^{\omega-\widehat\omega}}{\partial\n_i} ds,
\end{equation}
where $\widehat\omega$ is the balayage of $\omega$ onto $F\cap\Omega$. By Gauss' theorem \cite[Thm. II.1.1]{SaffTotik}, it is true that
\begin{equation}
\label{eq:nder1}
\frac{1}{2\pi}\int_L \frac{\partial V^{\widehat\omega}}{\partial\n_i} ds = \widehat\omega(\Omega) = \widehat\omega(F\cap\Omega) = 1.
\end{equation}
Since $V_D^\omega\equiv1/\cp(L,F\cap\Omega)$ outside of $\Omega$ and $V^{\widehat\omega}$ is harmonic across $L$, we get from \eqref{eq:nder1} and the analog of \eqref{eq:nder0} with $\partial\n_i$ replaced by $\partial\n_0$, that
\begin{equation}
\label{eq:nder2}
\frac{1}{2\pi}\int_L \frac{\partial V^\omega}{\partial\n_o} ds = \frac{1}{2\pi}\int_L \frac{\partial V^{\widehat\omega}}{\partial\n_o} ds = -\frac{1}{2\pi}\int_L \frac{\partial V^{\widehat\omega}}{\partial\n_i} ds = -1.
\end{equation}
As $\partial V^\omega/\partial\n_i$ and $\partial V^\omega/\partial\n_o$ are smooth on $L$ by \eqref{eq:toRemind}, in particular, Lipschitz smooth, we obtain from \cite[Thm. II.1.5]{SaffTotik} that
\[
d\omega = -\frac{1}{2\pi}\left(\frac{\partial V^\omega}{\partial\n_i}+\frac{\partial V^\omega}{\partial\n_o}\right)ds
\]
and therefore
\begin{equation}
\label{eq:nder3}
\frac{1}{2\pi}\int_L \frac{\partial V^\omega}{\partial\n_i} ds = -\omega(L)-\frac{1}{2\pi}\int_L \frac{\partial V^\omega}{\partial\n_o} ds = 0
\end{equation}
by \eqref{eq:nder2}. Finally, by plugging \eqref{eq:nder1} and \eqref{eq:nder3} into \eqref{eq:nder0}, we see the validity of \eqref{eq:nder4}. Hence, the lemma follows.
\end{proof}

\subsubsection{Reflected sets}
\label{sss:rs}
In the course of the proof of Theorem \ref{thm:convcap}, we used the conclusions of Lemma \ref{lem:pt} below. It has to do with the specific geometry of the disk, and we could not find an appropriate reference for it in the literature.

\begin{lem}
\label{lem:pt}
Let $E\subset\D$ be a compact set of positive capacity with connected complement $D$, and $E^*$ stand for the reflection of $E$ across $\T$. Further, let $\omega\in\Lm(E)$ be such that $\omega=\widehat\omega^*$, where $\omega^*$ is the reflection of $\omega$ across $\T$ and $\widehat\omega^*$ is the balayage of $\omega^*$ onto $E$. Then  $\omega=\omega_{(E,\T)}$ and $\widetilde\omega=\omega_{(\T,E)}$, where $\widetilde\omega$ is the balayage of $\omega$ onto $\T$ relative to $\D$. Moreover, it holds that $V_D^{\omega^*}=V_D^{\widetilde\omega}=1/\cp(E,\T)-V_\D^\omega$ in $\overline\D$.
\end{lem}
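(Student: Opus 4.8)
The plan is to express every Green potential occurring in the statement through ordinary (or spherical) logarithmic potentials by means of \eqref{eq:toRemind}--\eqref{eq:toRemind1} and the properties of balayage, and then to read the conclusions off the characterization of the Green equilibrium distribution recalled in Section~\ref{sss:cc} and the characterization of balayage as a weighted equilibrium measure, \eqref{eq:weqpot}. Write $D:=\overline\C\setminus E$; since $D$ is connected by hypothesis, $\partial D$ is the outer boundary of $E$, $\cp(\partial D)=\cp(E)>0$, and $\omega=\widehat{\omega^*}$ is automatically carried by $\partial D$. Recall that the Green function of $\D$ with pole at $u\in\D$ is $g_\D(z,u)=\log\bigl|(1-z\bar u)/(z-u)\bigr|$ for $z\in\overline\D$; integrating against $\omega$ and invoking \eqref{eq:unu} together with the identity $U^\omega=V_*^{\omega^*}$ from the beginning of Section~\ref{sss:wcf}, one gets
\[
V_\D^\omega=V^\omega-U^\omega\quad\text{on }\overline\D ,
\]
so that, since $|1-\xi\bar u|=|\xi-u|$ for $\xi\in\T$, one has $V^\omega=U^\omega$ on $\T$. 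Note that $\supp(\omega)\subset\partial D$ lies at positive distance from $\T$ and $\supp(\omega^*)=E^*$ at positive chordal distance from $\overline\D$, so $V^\omega$ and $U^\omega=V_*^{\omega^*}$ are harmonic, hence continuous, near $\T$; moreover $\omega$, being a balayage, has finite logarithmic energy and hence, by \eqref{eq:energies}, finite Green energy.

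First I would prove $\omega=\omega_{(E,\T)}$. Apply \eqref{eq:toRemind1} with $\nu=\omega^*$ (a unit measure on the compact set $E^*\subset\Om\subset D$) and domain $D$; since $\widehat{\omega^*}=\omega$ and $V_*^{\omega^*}=U^\omega$, this gives $V_D^{\omega^*}=U^\omega-V^\omega+C_0$ on $\overline\C$, where $C_0:=c(-U^\omega;\partial D)$. As $V_D^{\omega^*}$ vanishes q.e.\ on $E=\overline\C\setminus D$, it follows that $V^\omega=U^\omega+C_0$ q.e.\ on $E$, whence $V_\D^\omega=V^\omega-U^\omega=C_0$ q.e.\ on $E\supseteq\supp(\omega)$. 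The characterization of the Green equilibrium distribution then yields $\omega=\omega_{(E,\partial\D)}=\omega_{(E,\T)}$, and \eqref{eq:GreenEqual} identifies $C_0=1/\cp(E,\T)$. Plugging this back and using the first display, $V_D^{\omega^*}=U^\omega-V^\omega+1/\cp(E,\T)=1/\cp(E,\T)-V_\D^\omega$ on $\D$; and since both sides are continuous on $\overline\D$ (the poles of $\omega^*$ lie off $\overline\D$) and $V_\D^\omega\equiv0$ on $\T$, the equality persists throughout $\overline\D$.

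Next I would treat $\widetilde\omega$. Because $\widetilde\omega$ is the balayage of $\omega$ onto $\T=\partial\D$ relative to the bounded domain $\D$, \eqref{eq:equalBal} gives $V^{\widetilde\omega}=V^\omega$ at every point of $\T$ (all of them regular); since $V^\omega|_\T$ is continuous, so is the restriction of $V^{\widetilde\omega}$ to $\supp(\widetilde\omega)\subseteq\T$, and therefore $V^{\widetilde\omega}$ is continuous on $\C$ by the continuity principle. Being harmonic in $\D$, continuous on $\overline\D$, and equal on $\T$ to $V^\omega|_\T=U^\omega|_\T$, it must coincide with $U^\omega$ on all of $\overline\D$. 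Consequently $V^\omega-V^{\widetilde\omega}=V^\omega-U^\omega=C_0=1/\cp(E,\T)$ q.e.\ on $E\supseteq\partial D$; in view of \eqref{eq:weqpot} (with $\nu=\widetilde\omega$, so that $\widehat{\widetilde\omega}=\omega_{\partial D,-V^{\widetilde\omega}}$) and the characterization of the weighted equilibrium measure, this forces $\widehat{\widetilde\omega}=\omega$ and $c(\widetilde\omega;D)=c(-V^{\widetilde\omega};\partial D)=1/\cp(E,\T)$. Then \eqref{eq:toRemind} applied to $\widetilde\omega$ gives $V_D^{\widetilde\omega}=V^{\widetilde\omega}-V^\omega+1/\cp(E,\T)$ on $\overline\C$; on $\overline\D$ the right-hand side equals $U^\omega-V^\omega+1/\cp(E,\T)=V_D^{\omega^*}=1/\cp(E,\T)-V_\D^\omega$, while on $\T$ it equals the constant $1/\cp(E,\T)$, so one last appeal to the characterization of the Green equilibrium distribution (together with the finiteness of the Green energy of $\widetilde\omega$) yields $\widetilde\omega=\omega_{(\T,\partial D)}=\omega_{(\T,E)}$. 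This establishes all three assertions.

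The part I expect to require the most care is the handling of the ``quasi-everywhere'' exceptional sets and of the continuity of the various potentials near $\T$ and near $\partial D$: one must check that $V_\D^\omega$, $V^\omega$, $U^\omega$ extend continuously across $\T$, that the continuity principle applies to $V^{\widetilde\omega}$, and that the polar exceptional sets are harmless in the two uniqueness/characterization arguments. An alternative route to the key identity $\widehat{\widetilde\omega}=\omega$ is to obtain it geometrically from the step-by-step balayage formula \eqref{eq:balinsteps}, applied with the nested domains $\Om\subset D$, together with the reflection $z\mapsto1/\bar z$, which exchanges $(\D,E,\omega)$ with $(\Om,E^*,\omega^*)$, commutes with balayage, and is the identity on $\T$. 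Everything else is direct substitution into the cited potential-theoretic formulas.
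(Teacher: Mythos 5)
Your argument is correct, and it draws on the same appendix toolbox as the paper's proof --- the balayage identities \eqref{eq:toRemind}--\eqref{eq:toRemind1} and \eqref{eq:equalBal}, together with the uniqueness characterizations of the Green and weighted equilibrium distributions --- but the deductions are organized along a genuinely different path. The paper's pivot is the identity $\widetilde\omega=\widetilde{\omega^*}$ (the balayage of $\omega$ onto $\T$ from inside $\D$ coincides with the balayage of $\omega^*$ onto $\T$ from $\Om$), proved by comparing the two potentials on $\T$ via \eqref{eq:inthd} and invoking the uniqueness theorem for measures of finite energy; combined with step-by-step balayage \eqref{eq:balinsteps} and the hypothesis $\widehat{\omega^*}=\omega$, this yields $\widehat{\widetilde\omega}=\widehat{\widetilde{\omega^*}}=\widehat{\omega^*}=\omega$ at a stroke, after which a single application of \eqref{eq:toRemind} gives $V_D^{\widetilde\omega}=c(\widetilde\omega;D)-V_\D^\omega$ and all three conclusions are read off. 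You bypass $\widetilde{\omega^*}$ altogether: you first apply \eqref{eq:toRemind1} directly to $\omega^*$ in $D$ to identify $\omega=\omega_{(E,\T)}$, and only afterwards recover $\widehat{\widetilde\omega}=\omega$ from the weighted-equilibrium characterization \eqref{eq:weqpot}, which requires the intermediate identification $V^{\widetilde\omega}=U^\omega$ throughout $\overline\D$. That identification is the one genuinely delicate point of your route, and you flag it correctly: it needs the continuity principle (or the generalized maximum principle with q.e.\ boundary data) to upgrade the equality $V^{\widetilde\omega}=V^\omega$ on $\T$ to an identity of harmonic functions in $\D$; the paper's route avoids this entirely because $\widehat{\widetilde\omega}=\omega$ comes for free from \eqref{eq:balinsteps}. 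Indeed, the alternative you sketch in your final sentence --- obtaining $\widehat{\widetilde\omega}=\omega$ from iterated balayage and the reflection --- is essentially the paper's argument. What your organization buys is transparency: the explicit formula $V_\D^\omega=V^\omega-U^\omega$ and the role of the external field $-U^\omega$ are laid bare, and $\omega=\omega_{(E,\T)}$ is obtained before any statement about $\widetilde\omega$ is needed. One minor imprecision: \eqref{eq:energies} is stated for equilibrium measures, but the finiteness of the Green energies of $\omega$ and $\widetilde\omega$ that you need follows by the same device of integrating \eqref{eq:toRemind} against the measure itself (for $\widetilde\omega$ the domain $D$ is unbounded, so the constant $c(\widetilde\omega;D)$ enters but is finite).
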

\begin{proof} Denote by $\widetilde\omega$ and $\widetilde\omega^*$ the balayage of $\omega$ onto $\T$ relative to $\D$ and the balayage of $\omega^*$ onto $\T$ relative to $\Om$. It holds that $\widetilde\omega=\widetilde\omega^*$. Indeed, since $g_{\Om}(z,\infty)=\log|z|$, we get from \eqref{eq:inthd} for $z\in\T$ that
\begin{eqnarray}
V^{\widetilde\omega^*}(z) &=& \int\left[\log|t|-\log|z-t|\right]d\omega^*(t) = \int\left[-\log|u|-\log|z-1/\bar u|\right]d\omega(u) \nonumber \\
{} &=& -\int\log|1-z\bar u|d\omega(u) = V^\omega(z) = V^{\widetilde\omega}(z), \nonumber
\end{eqnarray}
where we used the fact that $z=1/\bar z$ for $z\in\T$ and \eqref{eq:equalBal} applied to $\omega$. Since both measures, $\widetilde\omega$ and $\widetilde\omega^*$, have finite energy, the uniqueness theorem \cite[Thm. II.4.6]{SaffTotik} yields that $\widetilde\omega=\widetilde\omega^*$.

By \eqref{eq:toRemind}, we have that $V_D^{\widetilde\omega} =  V^{\widetilde\omega-\widehat{\widetilde\omega}} + c(\widetilde\omega;D)$. Since $\widehat{\widetilde\omega} = \widehat{\widetilde\omega^*} = \widehat\omega^* = \omega$ and by the equality $\widetilde\omega=\widetilde\omega^*$, \eqref{eq:balinsteps}, and the conditions of the lemma, it holds that
\begin{equation}
\label{eq:eqpots}
V_D^{\widetilde\omega}(z) = V^{\widetilde\omega-\omega}(z)+c(\widetilde\omega;D) = c(\widetilde\omega;D) - V_\D^\omega(z), \quad z\in\overline\C,
\end{equation}
where we used \eqref{eq:toRemind} once more. Hence, $V_\D^\omega=c(\widetilde\omega;D)$ q.e. on $E$ and the unique characterization of the Green equilibrium distribution implies that $\omega=\omega_{(E,\T)}$ and $c(\widetilde\omega;D)=1/\cp(E,\T)$. Moreover, it also holds that $V_D^{\widetilde\omega}=c(\widetilde\omega;D)=1/\cp(E,\T)$ in $\overline\Om$ and therefore $\widetilde\omega=\omega_{(\T,E)}$, again by the characterization of the Green equilibrium distribution.

The first part of the last statement of the lemma is independent of the geometry of the reflected sets and follows easily from \eqref{balayageh} and the fact that for any $z\in\D$ the function $g_D(z,u)$ is a harmonic function of $u\in\Om$ continuous on $\T$. The second part was shown in \eqref{eq:eqpots}.
\end{proof}

\subsection{Dirichlet Integrals}
\label{DI}
Let $D$ be a domain with compact boundary comprised of finitely many analytic arcs that possess tangents at the endpoints. In this section we only consider functions continuous on $\overline D$ whose weak ({\it i.e.,} distributional) Laplacian in $D$ is a signed measure supported in $D$ with total variation of finite Green energy, and whose gradient, which is smooth off the support of the Laplacian, extends continuously to $\partial D$ except perhaps at the corners where its norm grows at most like the reciprocal of the square root of the distance to the corner. These can be written as a sum of a Green potential of a signed measure as above and a harmonic function whose boundary behavior has the smoothness just prescribed above. By Proposition \ref{prop:minset}, the results apply for instance to $V_{\overline\C\setminus\Gamma}^\omega$ on $\overline{\C}\setminus\Gamma$ as soon as $\omega$ has finite energy.

Let $u$ and $v$ be two such functions. We define the Dirichlet integral of $u$ and $v$ in $D$ by
\begin{equation}
\label{eq:greenformula}
\di_D(u,v) = -\frac{1}{2\pi}\iint_Du\Delta vdm_2 - \frac{1}{2\pi}\int_{\partial D}u\frac{\partial v}{\partial\n}ds,
\end{equation}
where $\Delta v$ is the weak Laplacian of $v$ and $\partial/\partial\n$ is the partial derivative with respect to the inner normal on $\partial D$. The Dirichlet integral is well-defined since the measure $|\Delta v|$ has finite Green energy and is supported in $D$ while $|u\partial v/\partial\n|$ is integrable on $\partial D$. Moreover, it holds that
\begin{equation}
\label{symmetry}
\di_D(u,v) = \di_D(v,u).
\end{equation}
Indeed, this follows from Fubini's theorem if $u$ and $v$ are both Green potentials and from Green's formula when they are both harmonic. Thus, we only need to check \eqref{symmetry} when $v$ is harmonic and $u$ is a Green potential. Clearly, then it should hold $\di_D(u,v)=0$. Let $a$ be a point in the support of $\Delta u$ and $\varepsilon>0$ be a regular value of $g_D(.,a)$ which is so small that the open set $A :=\{z\in D:~ g_D(z,a)<\varepsilon\}$ does not intersect the support of $\Delta u$. By our choice of $\varepsilon$, the boundary of  $A$ consists of $\partial D$ and a finite union of closed smooth Jordan curves. Write $v=v_1+v_2$  for some $C^\infty$-smooth functions $v_1, v_2$ such that the support of $v_2$ is included in $A$ (hence $v_2$ is identically zero in a neighborhood of $\overline{D\setminus A}$ where the closure is taken with respect to $D$) while the support of $v_1$ is compact in $D$. Such a decomposition is easily constructed using a smooth partition of unity subordinated to the open covering of $D$ consisting of $A$ and $\{z\in D:~ g_D(z,a)>\varepsilon/2\}$. By the definition of the weak Laplacian we have that
\begin{eqnarray}
\di_D(v,u) &=& -\frac{1}{2\pi}\iint_Dv_1\Delta udm_2 - \frac{1}{2\pi}\int_{\partial D}v_2\frac{\partial u}{\partial\n}ds\nonumber \\
{} &=& -\frac{1}{2\pi}\iint_Du\Delta v_1dm_2 - \frac{1}{2\pi}\int_{\partial D}v_2\frac{\partial u}{\partial\n}ds \nonumber \\
{} &=& - \frac{1}{2\pi}\iint_Du\Delta v_1dm_2 -\frac{1}{2\pi}\iint_Du\Delta v_2dm_2  = 0,  \nonumber
\end{eqnarray}
where we used Green's formula
\begin{equation}
\label{greensformula}
\iint_{A}(v_2\Delta u-u\Delta v_2)dm_2 = \int_{\partial A}\left(u\frac{\partial v_2}{\partial\n}-v_2\frac{\partial u}{\partial \n}\right)ds.
\end{equation}

Note that if $\gamma\subset D$ is an analytic arc which is closed in $D$ and $u,v$ are harmonic across $\gamma$, then 
\begin{equation}
\label{ajoutbord}
\di_{D}(u,v)=\di_{D\setminus\gamma}(u,v)
\end{equation}
because the rightmost integral in (\ref{eq:greenformula}) vanishes on $\gamma$ as the normal derivatives of $v$ from each side of $\gamma$ have opposite signs.

Observe also that if $\nu$ is a positive Borel measure supported in $D$ with finite Green's energy then $\Delta V_D^\nu=-2\pi \nu$ by Weyl's lemma (see Section~\ref{ss:gp}) and so by \eqref{eq:greenformula}
\begin{equation}
\label{eq:di1}
\di_D(V_D^\nu) := \di_D(V_D^\nu,V_D^\nu) = I_D[\nu].
\end{equation}
Finally, if $v$ is harmonic in $D$, it follows from  the divergence theorem that
\begin{equation}
\label{positivity}
\di_D(v) = \iint_{D}\|\nabla v\|^2dm_2,
\end{equation}
which is the usual definition for Dirichlet integrals. In particular, if $D^\prime\subset D$ is a subdomain with the same smoothness as $D$, and if we assume that $\supp\,\Delta v\subset D^\prime$, we get from \eqref{ajoutbord} and
\eqref{positivity} that
\begin{equation}
\label{decompositivity}
\di_D(v) = \di_{D^\prime}(v)+\di_{D\setminus \overline{D^\prime}}(v) = \di_{D^\prime}(v)+\iint_{D\setminus\overline{ D^\prime}}\|\nabla v\|^2dm_2.
\end{equation}

\section{Numerical Experiments}
\label{sec:numer}

In order to numerically construct rational approximants, we first compute the truncated Fourier series of the approximated function (resulting rational functions are polynomials in $1/z$ that converge to the initial function in the Wiener norm) and then use {\it Endymion} software (it uses the same algorithm as the previous version \emph{Hyperion} \cite{rGr}) to compute critical points of given degree $n$. The numerical procedure in Endymion is a descent algorithm followed by a quasi-Newton iteration that uses a compactification of the set $\rat_n$ whose boundary consists of $n$ copies of $\rat_{n-1}$ and $n(n-1)/2$ copies of $\rat_{n-2}$ \cite{BCarO91}. This allows to generate several initial conditions leading to a critical point. If the sampling of the boundary gets sufficiently refined, the best approximant will be attained. In practice, however, one cannot be absolutely sure the sampling was fine enough. This why we speak below of rational approximants and do not claim they are best rational approximants. They are, however, irreducible critical points, up to numerical precision.

 In the numerical experiments below we approximate functions given by
\[
f_1(z) = \frac{1}{\sqrt[4]{(z-z_1)(z-z_2)(z-z_3)(z-z_4)}} + \frac{1}{z-z_1},
\]
where $z_1=0.6+0.3i$, $z_2=-0.8+0.1i$, $z_3=-0.4+0.8i$, $z_4=0.6-0.6i$, and $z_5=-0.6-0.6i$; and
\[
f_2(z) = \frac{1}{\sqrt[3]{(z-z_1)(z-z_2)(z-z_3)}} + \frac{1}{\sqrt{(z-z_4)(z-z_5)}},
\]
where $z_1=0.6+0.5i$, $z_2=-0.1+0.2i$, $z_3=-0.2+0.7i$, $z_4=-0.4-0.4i$, and $z_5=0.1-0.6i$. We take the branch of each function such that $\lim_{z\to\infty}zf_j(z)=2$, $j=1,2$, and use first 100 Fourier coefficients for each function.

\begin{figure}[h!]
\centering
\includegraphics[scale=.275]{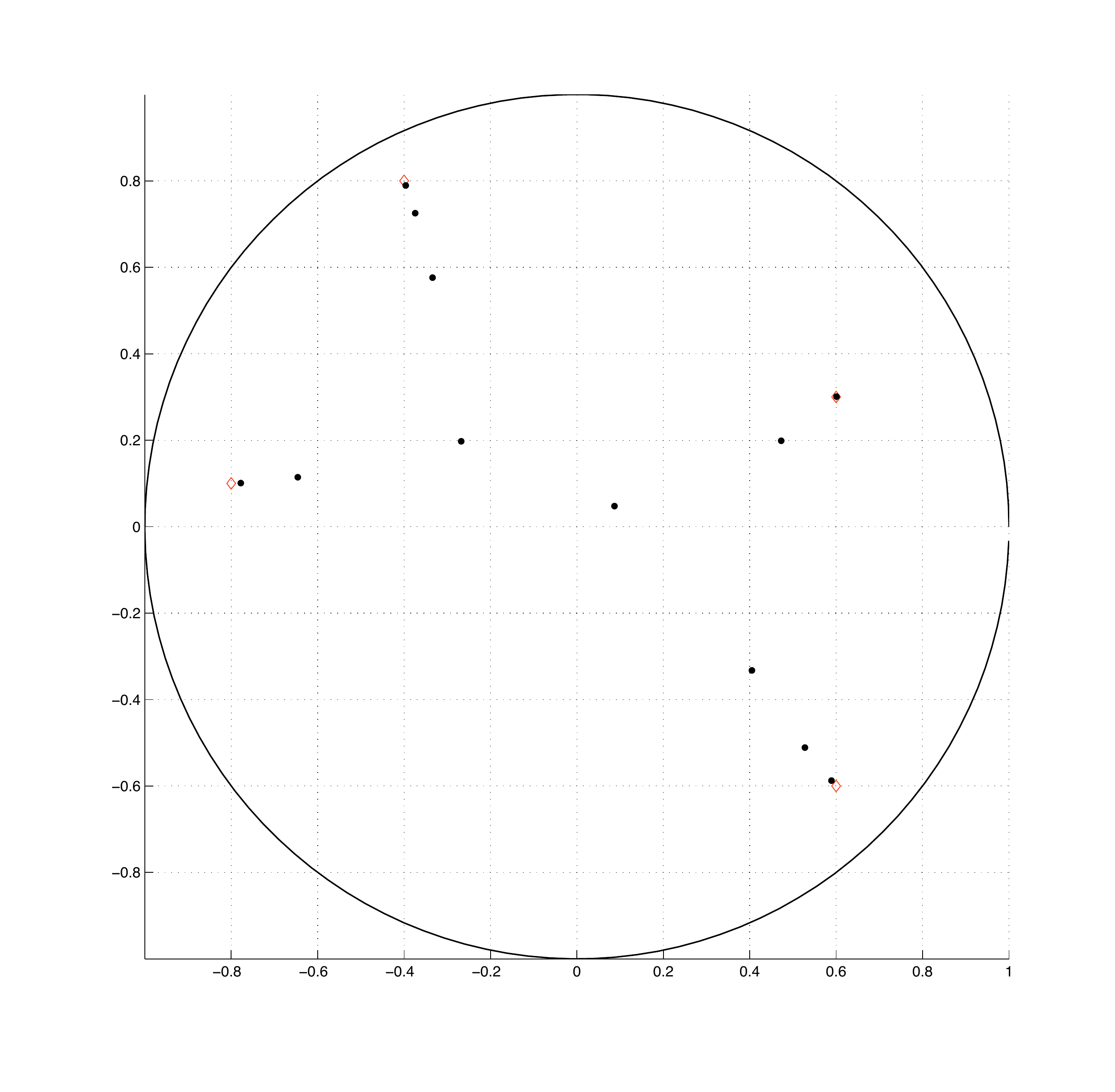}
\includegraphics[scale=.275]{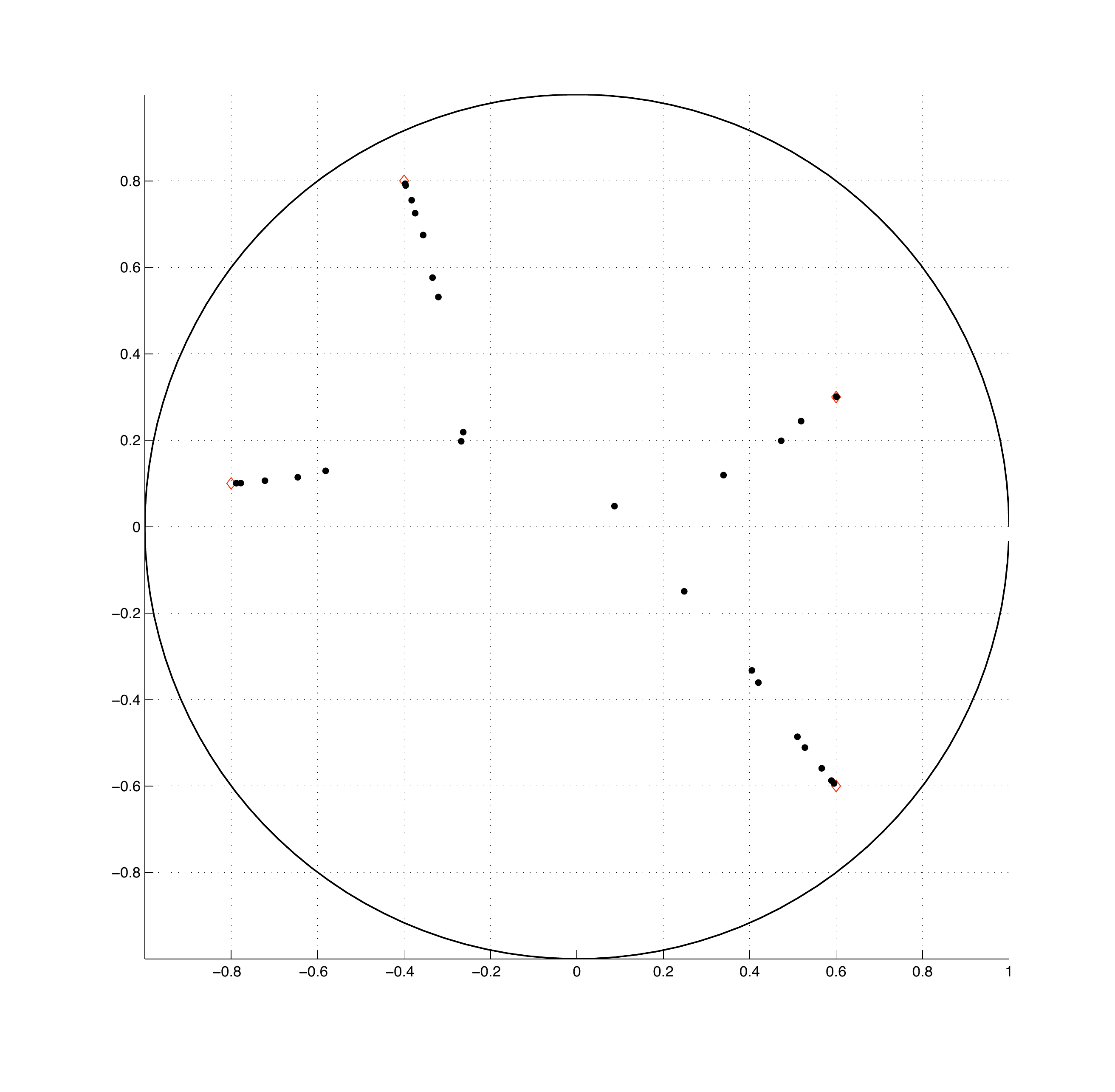}
\caption{\small The poles of rational approximant to $f_1$ of degree 12 (left) and superimposed poles of rational approximants to $f_1$ of degrees 12 and 16 (right).}
\end{figure}

\begin{figure}[h!]
\centering
\includegraphics[scale=.275]{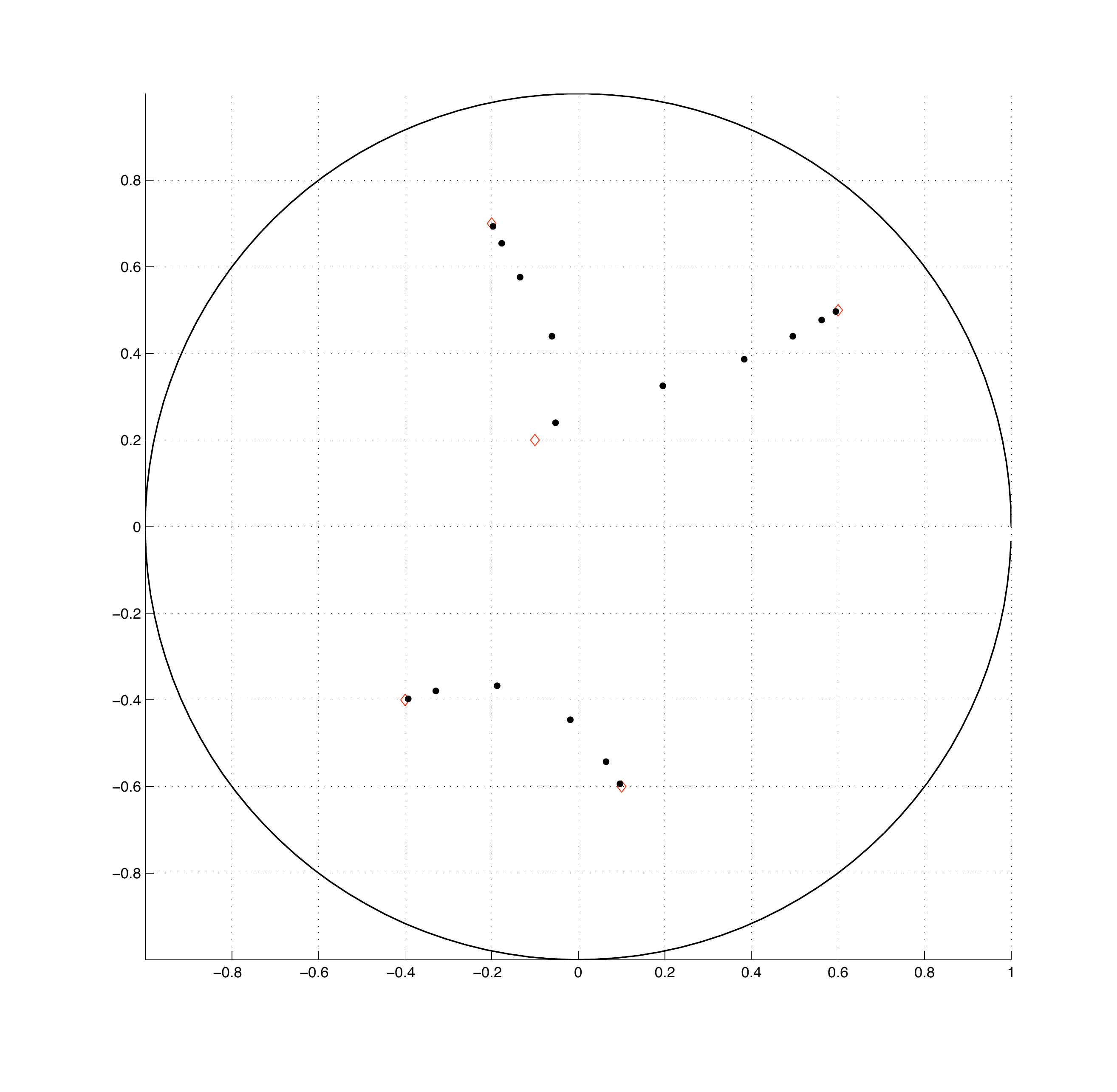}
\includegraphics[scale=.275]{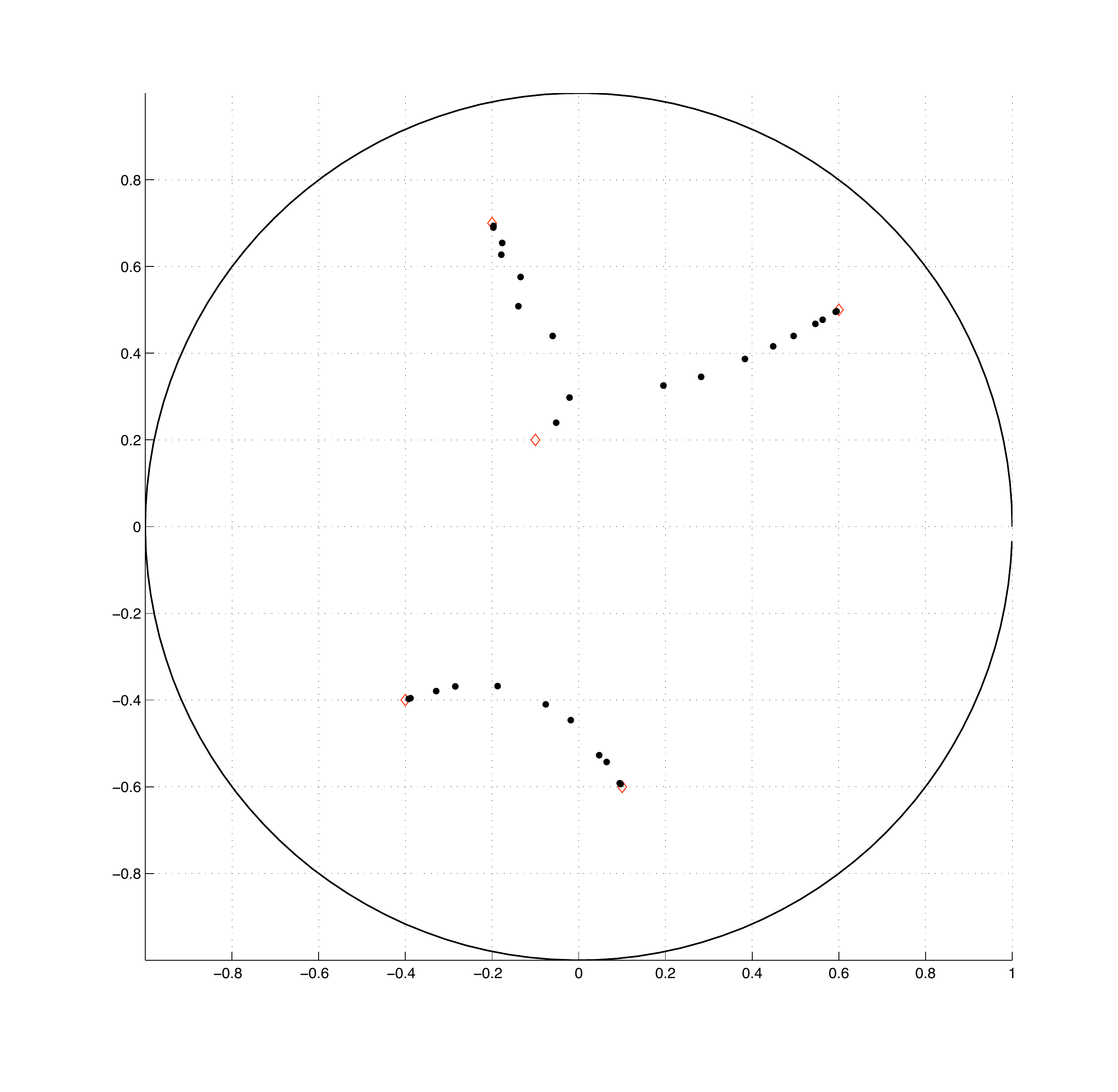}
\caption{\small The poles of rational approximant to $f_2$ of degree 16 and superimposed poles of rational approximants to $f_2$ of degrees 13 and 16 (right).}
\end{figure}

On the figures diamonds depict the branch points of $f_j$, $j=1,2$, and disks denote the poles of the corresponding approximants.

\bibliographystyle{elsarticle-num}

\end{document}